\renewcommand\part{%
   \if@noskipsec \leavevmode \fi
   \par
   \addvspace{4ex}%
   \@afterindentfalse
   \secdef\@part\@spart}
\def\@part[#1]#2{%
    \ifnum \c@secnumdepth >\m@ne
      \refstepcounter{part}%
      \addcontentsline{toc}{part}{\thepart\hspace{1em}#1}%
    \else
      \addcontentsline{toc}{part}{#1}%
    \fi
    {\parindent \z@ \raggedright
     \interlinepenalty \@M
     \normalfont
     \ifnum \c@secnumdepth >\m@ne
    \bfseries \partname\nobreakspace\thepart
       \par\nobreak
     \fi
      \bfseries #1%
     \par}%
    \nobreak
    \vskip 3ex
    \@afterheading}
\def\@spart#1{%
    {\parindent \z@ \raggedright
     \interlinepenalty \@M
     \normalfont
     \huge \bfseries #1\par}%
     \nobreak
     \vskip 3ex
     \@afterheading}
\DeclareMathAlphabet{\mathbbm}{U}{bbm}{m}{n}
\definecolor{CadetBlue}{cmyk}{0.62, 0.57, 0.23, 0 }
\definecolor{black}{cmyk}{1, 0.5, 0, 0 }
\definecolor{RedViolet}{cmyk}{0.07, 0.9, 0, 0.34 }
\definecolor{SeaGreen}{cmyk}{0.69, 0, 0.5, 0}
\DeclareMathAlphabet{\mathpzc}{OT1}{pzc}{m}{it}
\newcommand{\R}{\mathbb R}
\newcommand{\C}{\mathbb C}
\newcommand{\D}{\mathbb D}
\newcommand{\F}{\mathbb F}
\newcommand{\N}{\mathbb N}
\newcommand{\PR}{\mathbb P}
\newcommand{\Q}{\mathbb Q}
\newcommand{\Z}{\mathbb Z}
\newcommand{\T}{\mathbb T}
\newcommand{\e}{\upvarepsilon}
\newtheorem{theo}{Theorem}
\newtheorem{lemm}{Lemma}
\newtheorem{prop}{Proposition}
\newtheorem{coro}{Corollary}
\newtheorem*{nntheo}{Theorem}
\newtheorem*{conj}{Conjecture}
\theoremstyle{definition}
\newtheorem{defi}{Definition}
\theoremstyle{remark}
\newtheorem{note}{Note}
\title[Quantum Drinfeld Modules I]{Quantum Drinfeld Modules I: Quantum Modular Invariant and Hilbert Class Fields}
\author{L. Demangos}
\address{Xi'an Jiaotong - Liverpool University, Department of Mathematical Sciences, Mathematics Building Block B, 111 Ren'ai Road, Suzhou Dushu Lake Science
and Education Innovation District, Suzhou Industrial Park, Suzhou, Peoples Republic of China, 215123}
\email{Luca.Demangos@xjtlu.edu.cn}
\author{T.M. Gendron}
\address{Instituto de Matem\'{a}ticas -- Unidad Cuernavaca, Universidad
Nacional Aut\'{o}noma de M\'{e}xico, Av. Universidad S/N, C.P. 62210
Cuernavaca, Morelos, M\'{e}xico}
\email{tim@matcuer.unam.mx}
\subjclass[2010]{Primary 11R37, 11R80, 11R58, 11F03; Secondary 11K60}
\keywords{quantum j-invariant, global function fields, diophantine approximation, Real Multiplication, Hilbert class fields}
\date{\today}
\keywords{quantum $j$-invariant, Hilbert class field, function field arithmetic}
\begin{document}
\vspace{2cm}

 \maketitle

 \begin{abstract} This is the first of a series of two papers in which we present a solution to Manin's Real Multiplication program \cite{Man}  -- an approach to Hilbert's 12th problem for real quadratic extensions of $\Q$  --  in positive characteristic, using quantum analogs of the exponential
function and the modular invariant. 
In this first paper, we treat the problem of Hilbert class field generation.  If $k=\F_{q}(T)$ and $k_{\infty}$ is the analytic completion of $k$, we introduce the quantum modular
invariant 
\[ j^{\rm qt}: k_{\infty}\multimap k_{\infty}\]
as a multivalued, modular invariant function.  Then if $K=k(f)\subset k_{\infty}$ is a real quadratic extension of $k$ where $f$ is a quadratic unit, we show that
the Hilbert class field $H_{\mathcal{O}_{K}}$ (associated to $\mathcal{O}_{K}=$ integral closure of $\F_{q}[T]$ in $K$) is generated over $K$ by 
the product of the multivalues of $j^{\rm qt}(f)$.
 \end{abstract}

\tableofcontents

\section*{Introduction}

This paper studies a new and deep connection between diophantine approximation and algebraic number theory, implemented by
a multivalued function called the quantum modular invariant.  Specifically, we show that special multivalues of the quantum modular invariant may be used to generate Hilbert class fields of real quadratic global function fields, thus providing a new solution to Hilbert's 12th problem in this case.

The 12th problem of Hilbert, one of three on Hilbert's list which remains incontrovertibly open, concerns the search for analytic functions
whose special values generate
all of the abelian extensions of a finite extension $K/\Q$ (\cite{Schapp}, pages 249--250).  Particularly one is interested in explicit descriptions
of the Hilbert class field, the
ray class fields and the maximal abelian extension.  The problem extends naturally to the class of global fields i.e.\ it may be considered as well in the case of a finite extension $K/\F_{q}(T)$, where $\F_{q}$ is the finite field having $q=p^{n}$ elements, $p$ a prime.

Hilbert was motivated by the Theorem of Kronecker-Weber \cite{Neu}, which solves the case $K=\Q$ using the exponential function, as well as a conjectural treatment of the case $K$ a complex quadratic extension of $\Q$, which was soon after solved by
 Weber and his student Fueter \cite{Ser}, \cite{Sil}, in which the sought after analytic functions
 are the modular invariant and certain elliptic functions associated to elliptic curves having Complex Multiplication.  More specifically, if $\upmu\in K-\Q$
 is such that the elliptic curve $\T_{\upmu}=\C/ (\Z +\Z\upmu)$ has endomorphism ring $\mathcal{O}_{K}$ = the ring of $K$-integers,
  the Theorem of Weber-Fueter states that 
  \begin{itemize}
\item[1.] The Hilbert class field $H_{K} $ of $K$ satisfies 
\[ H_{K}=K(j(\upmu)),\]
where $j(\upmu )$ is the modular invariant of $\T_{\upmu}$.
\item[2.] The ray class field $K^{\mathfrak{M}} $ defined by the modulus $\mathfrak{M}\subset\mathcal{O}_{K}$ satisfies
\[ K^{\mathfrak{M}} = H_{K}( h_{\upmu}(t):\; t\in  \T_{\upmu}[\mathfrak{M}]   ),\]
where $ \T_{\upmu}[\mathfrak{M}]$ is the group of $\mathfrak{M}$ torsion points of $\T_{\upmu}$ and $h_{\upmu}$
is a multiple\footnote{In the case $j(\upmu)=0,12^{3}$, one must use multiples of $\wp_{\upmu}^{3}$ resp.\  $\wp_{\upmu}^{2}$.} of the Weierstra\ss\ function $\wp_{\upmu}$
on $\T_{\upmu}$.
\end{itemize}
 
In 2004, Yuri Manin \cite{Man} proposed the development of a parallel theory of {\it Real Multiplication} of quantum tori in order to treat the case of $K$ a real quadratic extension of $\Q$.  By definition, a {\it quantum torus} is a quotient of the form
\[ \T(\uptheta ) :=\R/(\Z +\uptheta\Z), \quad \uptheta\in\R-\Q;\]
it is the obvious analog of the complex torus introduced above in this context.
However, both $\T(\uptheta )$ and the moduli space of quantum tori \[ {\sf Mod}^{\rm qt}:={\rm PGL}_{2}(\Z )\backslash (\R-\Q)\] are noncommutative
spaces (non Hausdorff quotients) and so one is immediately confronted with a serious obstacle:  finding the appropriate analogs of modular invariant and elliptic function in this singular setting.  

In this paper and its sequel \cite{DGIII}, we give a solution to the Real Multiplication program in the case of $K$ a 
real\footnote{A quadratic extension $K/k$ is {\it real} if the place at $\infty$ splits completely, otherwise
it is called {\it complex}.} quadratic extension of the global field $k=\F_{q}(T)$ using quantum notions of the
modular invariant \cite{DGI}, \cite{DGII} and the exponential function.  

The quantum modular invariant was first introduced in the number field setting \cite{Ge-C}  
as a discontinuous, modular invariant and multi-valued function
\[ j^{\rm qt}:
 {\sf Mod}^{\rm qt}\multimap \R .\]
For any $\uptheta\in \R-\Q$ and $\upvarepsilon >0$, one defines first
the approximant, $j_{\upvarepsilon}(\uptheta )$, using Eisenstein-like series over the set of {\it $\upvarepsilon$ diophantine approximations}
\[ B_{\upvarepsilon}(\uptheta ) =\{ n\in\Z|\; |n\uptheta-m|<\upvarepsilon \text{ for some }m\in\Z \}.\]
Then $j^{\rm qt}(\uptheta )$ is defined
to be the set of limits of the approximants  $j_{\upvarepsilon}(\uptheta )$ as $\upvarepsilon\rightarrow 0$.  
See \S 1 of \cite{Ge-C}.  PARI-GP experiments (see the Appendix of \cite{Ge-C}) indicate that $j^{\rm qt}$ is multi-valued, and a more refined calculation due to Pink \cite{Pink} suggests
that $j^{\rm qt}(\uptheta )$ is a self-similar Cantor set when $\uptheta$ is quadratic.

\begin{conj}  Let $\uptheta\in\R-\Q$ be a fundamental quadratic unit and let $K=\Q (\uptheta )$; let $D$ be the fundamental discriminant of $K$.   Then 
$j^{\rm qt}(\uptheta )$ is a Cantor set, self-similar of order $D$ and
\[ H_{K}= K({\sf N}^{\rm avg}(j^{\rm qt}(\uptheta )))\]
where $H_{K}$ is the Hilbert class field of $K$ and ${\sf N}^{\rm avg}(j^{\rm qt}(\uptheta ))$ is a weighted product {\rm (}``multiplicative expectation''{\rm )} of the elements of $j^{\rm qt}(\uptheta )$.
\end{conj}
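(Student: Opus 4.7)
The plan is to import the strategy that the paper develops in the positive-characteristic setting and adapt it to the archimedean case. First I would exploit Lagrange's theorem to write the continued fraction of a fundamental quadratic unit $\uptheta$ as eventually periodic, so that the convergents $p_n/q_n$ (and hence the Diophantine approximation sets $B_{\upvarepsilon}(\uptheta)$) decompose into a disjoint union of orbits under multiplication by the fundamental unit of $\mathcal{O}_{K}^{\times}$. This quasi-periodic bookkeeping should recast the defining limit $j^{\rm qt}(\uptheta ) = \lim_{\upvarepsilon\to 0} j_{\upvarepsilon}(\uptheta )$ as a limit along a sequence indexed by the period length of the continued fraction, rather than along the unstructured sets $B_{\upvarepsilon}(\uptheta )$.

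Next I would try to repackage the Eisenstein-like partial sums defining $j_{\upvarepsilon}$ as sums over fractional $\mathcal{O}_{K}$-ideals cut out by the convergents. Along a single period of the continued fraction, these ideals should be permuted (up to principal ideals) by the action of $\mathcal{O}_{K}^{\times}$, and across periods by something resembling the narrow class group. A counting argument would then predict exactly $D$ cluster values, each labeled by a residue class, with the weight attached to each class given by its asymptotic density inside $B_{\upvarepsilon}(\uptheta)$ -- explaining the distinct probabilities observed in the PARI-GP experiments.

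With multi-values indexed in this way, algebraicity $j^{\rm qt}(\uptheta )\subset \overline{\Q}$ should follow by identifying each cluster value with the classical $j$-invariant of an auxiliary CM elliptic curve attached to an ideal class; the weighted product ${\sf N}^{\rm avg}(j^{\rm qt}(\uptheta ))$ would then be precisely the norm from $H_{K}$ down to $K$ of a Hilbert class field generator, and would therefore generate $H_{K}/K$ by a Shimura-reciprocity style calculation. The Galois action of ${\rm Gal}(H_{K}/K)$ on the multi-values should correspond to the permutation of ideal classes encoded in the continued fraction period.

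The principal obstacle is control of the limit $\upvarepsilon \to 0$: even the finiteness of the set of cluster values of $j_{\upvarepsilon}(\uptheta )$ is open, and only one value was rigorously computed in \cite{Ge-C}, for $\uptheta = \upvarphi$. The difficulty is that the Eisenstein-like series sits at the boundary of absolute convergence, so any reorganization of the sum by continued-fraction period requires delicate cancellation estimates between contributions of successive convergents. Unlike the positive-characteristic setting of the present paper, where the non-archimedean topology on $k_{\infty}$ controls the series termwise, in the archimedean case one would first need a substitute -- perhaps a smoothed version of $j_{\upvarepsilon}$ or a regularized Eisenstein series -- before the periodicity of the continued fraction can be brought fully to bear on both the finiteness count $D$ and the algebraicity of the multi-values.
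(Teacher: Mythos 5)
There is a fundamental mismatch here: the statement you are proving is stated in the paper as an open \emph{conjecture}, and the paper offers no proof of it. The authors explicitly note that ``the experimental observations made at the quadratics remain without proof'' and that only a single value of $j^{\rm qt}(\upvarphi)$ has ever been rigorously computed; what the paper actually proves is the analog over $k=\F_{q}(T)$ (Theorems \ref{jfidthem}, \ref{HAinft1Thm} and \ref{maintheo}). Your proposal is a research program, not a proof, and to your credit you say so in the last paragraph. But the gap is larger than the convergence issue you identify. The function-field argument does not merely benefit from the ultrametric making the series converge termwise; it rests on two structural inputs with no archimedean counterpart. First, the sets $\Uplambda_{\upvarepsilon}(f)$ are $\F_{q}$-vector spaces with an explicit basis (Lemma \ref{explicitLambda}), and after rescaling they converge in the Hausdorff metric to genuine ideals $\mathfrak{a}_{i}$ of the Dedekind domain $A_{\infty_{1}}$ of functions regular away from \emph{one} of the two places over $\infty$. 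Over $\Q$ there is no subring of $\mathcal{O}_{K}$ playing the role of $A_{\infty_{1}}$, because ${\rm Spec}\,\Z$ has only one archimedean place to ``remove''; the $D$ conjectured values correspond in the function-field model to the kernel of ${\sf Cl}_{A_{\infty_{1}}}\to{\sf Cl}_{\mathcal{O}_{K}}$ (Proposition \ref{KerProp}), a group that simply does not exist in your setting. Second, algebraicity and Galois generation come from Goss's lemma and Hayes's theory of sign-normalized rank-one Drinfeld modules over $A_{\infty_{1}}$, i.e.\ from an explicit class field theory that is itself the thing Hilbert's 12th problem asks for over $\R$.

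Your step ``identifying each cluster value with the classical $j$-invariant of an auxiliary CM elliptic curve attached to an ideal class'' is the point where the plan breaks outright: $K=\Q(\uptheta)$ is \emph{real} quadratic, so no elliptic curve has CM by an order in $\mathcal{O}_{K}$, and a CM $j$-invariant would generate the Hilbert class field of an imaginary quadratic field, not of $K$. The continued-fraction bookkeeping in your first two paragraphs is a reasonable archimedean shadow of Lemma \ref{explicitLambda} and Proposition \ref{renormalizedlimit} (and the unit-orbit decomposition is consistent with the observed periodicity in Figure 1), but without a replacement for $A_{\infty_{1}}$ and for Hayes theory, the finiteness count $D$, the algebraicity, and the identification of ${\sf N}^{\rm avg}$ with a norm from $H_{K}$ all remain conjectural. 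In short: no proof of this statement exists in the paper or, as far as the authors know, anywhere else, and your outline does not close any of the three essential gaps.
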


If verified, the conjecture would give a solution to the Hilbert class field part of Manin's Real Multiplication
 program.  
 In the present paper we formulate and prove the analog of this conjecture in the setting of function fields over finite fields.  In \cite{DGIII}, we 
 introduce a notion of quantum exponential function, and use the subset of its values corresponding to  ``quantum torsion points'' to give an explicit description of ray class fields.
 
 We now give a synopsis of the content of this paper.
Let $k=\F_{q}(T)$, $A=\F_{q}[T]$ and let
$k_{\infty}=\F_{q}((1/T))$ be the completion of $k$
with respect to the valuation $v_{\infty}(a)=-\deg_{T}(a)$. One views $k_{\infty}$ as the function field analog of the real numbers.  See \cite{Goss}, \cite{Thak} for basic notions of function field arithmetic.

We introduce the quantum modular invariant
\[ j^{\rm qt}: {\sf Mod}^{\rm qt}:={\rm GL}_{2}A\backslash (k_{\infty}-k) \multimap k_{\infty},
\]
following the same procedure used in the number field case.  That is, for $f\in k_{\infty}-k$, we start with the set of $\upvarepsilon$ diophantine approximations
\[ \Uplambda_{\upvarepsilon} (f) := \{ a\in A|\; | af -b|_{\infty}<\upvarepsilon \text{ for some $b\in A$}\}\]
(where $|\cdot|_{\infty}$ is the absolute value associated to $v_{\infty}$),
which in this setting is an $\F_{q}$ vector space.   We then define the approximant $j_{\upvarepsilon}(f)$ using $\Uplambda_{\upvarepsilon} (f) $ in place of the lattices occurring 
in the function field modular invariant (as defined in
 \cite{Gek}). Then the association
 \[ f\longmapsto j^{\rm qt}(f):= \lim_{\upvarepsilon\rightarrow 0} j_{\upvarepsilon} (f) \] defines a discontinuous, ${\rm GL}_{2}A$-invariant and
multivalued function; see \cite{DGI}, \cite{DGII} and \S \ref{Quantumjsection} of this paper.

Now fix $f\in k_{\infty}-k$ a fundamental quadratic unit, denote $K=k(f)$ and let $\mathcal{O}_{K}$ be the integral closure of $A$ in $K$.  
If $\deg_{T}(f)=d$ then the discriminant $D$ of $f$ satisfies $v_{\infty}(\sqrt{D})=-d$ and $ j^{\rm qt}(f)$ consists of precisely $d$ equi-distributed values (Theorem \ref{jfidthem}, Corollary \ref{injectivitycoro}
of this paper).  This result is the analog of the order of self-similarity predicted in the Conjecture above.
Let $H_{\mathcal{O}_{K}}$ be the Hilbert class field (in the sense of
Rosen \cite{Ros}) associated to $\mathcal{O}_{K}$.

\begin{nntheo} Let $f\in k_{\infty}-k$, $K=k(f)$ be as above.   
  Then 
\[ H_{\mathcal{O}_{K}} = K( {\sf N}(j^{\rm qt}(f) )),\]
where $ {\sf N}(j^{\rm qt}(f ))$ is the product of the $d$ elements of $j^{\rm qt}(f)$.
\end{nntheo}

To prove this theorem, we make use of the following description of $j^{\rm qt}(f)$ using ideals in a sub Dedekind domain of $\mathcal{O}_{K}$. 
Let $ \Upsigma_{K}$ be the curve over $\F_{q}$ associated
to $K$ and $\Upsigma_{K}\rightarrow \PR^{1}$ the morphism inducing the extension $K/\F_{q}(T)$.   Choose $\infty_{1}\in \Upsigma_{K}$ a point lying over $\infty\in \PR^{1}$
and let $A_{\infty_{1}}\subset\mathcal{O}_{K}$ be the sub Dedekind domain of functions regular
outside of $\infty_{1}$.   If  $H_{A_{\infty_{1}}}\supset H_{\mathcal{O}_{K}}$ is the Hilbert class field
associated to $A_{\infty_{1}}$,
write 
\begin{align}\label{CFTIso}  Z:= {\rm Gal}(H_{A_{\infty_{1}}}/H_{\mathcal{O}_{K}})\cong  {\rm Ker}\left( {\sf Cl}_{A_{\infty_{1}}}\longrightarrow {\sf Cl}_{\mathcal{O}_{K}}\right),
\end{align}
where ${\sf Cl}_{A_{\infty_{1}}}, {\sf Cl}_{\mathcal{O}_{K}}$ are the ideal class groups of $A_{\infty_{1}}$ resp.\ $\mathcal{O}_{K}$,
and the isomorphism in (\ref{CFTIso}) is that given by reciprocity.  The group $Z$ is cyclic of order $d$ (Proposition
\ref{KerProp} of \S \ref{HCFGenerationSection}) and
if we denote by $[\mathfrak{a}_{i}]$ the ideal class corresponding to $\upsigma_{i}\in Z$, $i=0,\dots ,d-1$ (in which $\mathfrak{a}_{0}=(f)$ defines the identity),
 then (Theorem \ref{jfidthem})
\[ j^{\rm qt}(f) = \{ j(\mathfrak{a}_{i} )|\; i=0,\dots ,d-1\} \subset H_{A_{\infty_{1}}},\]
where $ j(\mathfrak{a}_{i})$ is the $j$-invariant of the ideal class $[\mathfrak{a}_{i}]$ (defined in \S \ref{Quantumjsection}). 
 The Galois group $Z$
acts transitively on $j^{\rm qt}(f)$ making the latter a $Z$-torsor (see proof of Theorem \ref{HAinft1Thm}),
which implies ${\sf N}(j^{\rm qt}(f) )\in H_{\mathcal{O}_{K}}$.  Thus, to prove the Theorem, it suffices
to show that ${\sf N}(j^{\rm qt}(f) )^{\upsigma}\not={\sf N}(j^{\rm qt}(f) )$ for all $\upsigma\in {\rm Gal}(H_{\mathcal{O}_{K}}/K)$, see Theorem \ref{normdiff} of \S \ref{Injectivity}.   The proof of the latter is accomplished by way of a fine analysis of the absolute values of the zeta functions used in the definition
of the $j(\mathfrak{a}_{i})$.

It is important at this stage to compare the theory presented in this series of papers with the elegant theory of Hayes \cite{Hayes},
which makes an appearance in this work in the form of an essential
intermediate step -- showing that $j^{\rm qt}(f)$ consists of algebraic elements.  Hayes theory gives an explicit class field theory for function fields,
 however in this connection there are two
 points worth making:

\begin{itemize}
\item 
While Hayes theory gives, for each finite extension $L/k$, explicit descriptions of the class fields associated to a ``rank 1'' Dedekind domain 
\[ A_{P} = \{\text{functions regular in $\Upsigma_{L}-P$}\}, \quad P\in\Upsigma_{L}, \]  it does not give explicit descriptions of the traditional class fields 
classically considered in the number field setting:  the Hilbert class field and ray class fields associated 
to the integral closure $\mathcal{O}_{L}$ of $A$ in $L$.  The problem is that $\mathcal{O}_{L}$ has in general an infinite unit group, and so cannot be treated by Hayes' rank 1 techniques.
\item 
The generators of the Hilbert class fields $H_{A_{P}}$ provided by Hayes theory are not given as values of an analog of a modular function
(or indeed as values of any analytic function) except in the case where $L/k$ is a complex quadratic extension, where one has the exact counterpart of the theory of Complex Multiplication \cite{Gek}.  
\end{itemize}

The explicit class field theory described in this paper and its sequel, at least in the real quadratic case, may be seen as being somewhat closer in spirit to that called for by the 12th problem. 
The new object introduced, the quantum modular invariant, is available in characteristic zero, and, as the Conjecture above suggests,
offers a novel and practicable approach to the Real Multiplication program in its original characteristic zero formulation.  
Moreover, the Theorem we prove, being the positive characteristic analog of the Conjecture, offers evidence in support of the Conjecture's plausibility.

\vspace{3mm}

\noindent {\bf Acknowledgements:}  We would like to thank the Instituto de Matem\'{a}ticas (Unidad Cuernavaca) of Universidad Nacional Aut\'{o}noma de M\'{e}xico, as well
as the University of Stellenbosch, for their generous support of the first author during his postdoctoral stays at each institution.

\section{Analytic Notion of Quantum Drinfeld Module}

In this section we present the fundamental notion from which issues all of the essential constructions appearing in this paper and its sequel \cite{DGIII}: it may be informally referred to as the analytic notion of quantum Drinfeld module.  
In what follows we use basic notation already established in the Introduction.   For a review of the relevant background in function field arithmetic, see \cite{Goss}, \cite{Thak}, \cite{VS}.

For any
$x\in k_{\infty}$, denote by \[ |x|=q^{-v_{\infty}(x)}=q^{\deg_{T}(x)}\] the absolute value of $x$ and by $\| x\|$ the distance to the nearest element of $A$.  Note that $\| x\|<1$ and therefore, there exists a {\it unique}
$a\in A$ with $\| x\|=|x-a|$: $a$ is the ``polynomial part'' of $x$.  (The uniqueness follows from the non archimedean property of the absolute value.)   Now fix $f\in k_{\infty}$.
For any $\upvarepsilon>0$ the set 
\[ \Uplambda_{\upvarepsilon}(f) = \{ \uplambda \in A:\; \|  \uplambda f\|<\upvarepsilon\} \subset A\]
is an $\F_{q}$-vector space: this follows from the non-archimedean nature of the absolute value, see also Proposition 1 of \cite{DGI}.

In this paper and the sequel \cite{DGIII} we will be interested in studying the collection 
\[ \{ \Uplambda_{\upvarepsilon}(f)\}_{0<\upvarepsilon<1} \]  for $f$ a quadratic unit, about which much can be said.  Such an $f$ is the solution to 
an equation of the form
\begin{align}\label{defeqn} X^{2}-aX-b=0, \;\; a,b\in A,\;\; d:=\deg_{T} (a)>0,\; b\in \F_{q}^{\ast}.\end{align}   
Replacing $f$ by $cf$ if necessary, $c\in\F_{q}$, we may assume that $a$ is a monic polynomial.

Consider the sequence of monic polynomials
${\tt Q}_{n}\in A$ defined recursively by 
\[ {\tt Q}_{0}=1, {\tt Q}_{1}=a,\dots , {\tt Q}_{n+1}=a{\tt Q}_{n} +b{\tt Q}_{n-1}.\]
When $b=1$, this is the sequence of best approximations of $f$, see \cite{DGII}, \cite{Thak}.   Denote by $f^{\ast}$ the conjugate of $f$.  Without loss of generality we assume $|f|>|f^{\ast}|$, and then
 $|f|=|a|=q^{d}$ and $|f^{\ast}|=q^{-d}$.  Let $D=a^{2}+4b$ be the discriminant.
  
   For all $n$, we have (by an easy proof by induction) {\it Binet's formula}
\[{\tt Q}_{n} =\frac{ f^{n+1}-(f^{\ast})^{n+1}}{\sqrt{D}},\quad n=0,1,\dots ,\]
where the square root has been chosen so that $f=(a+\sqrt{D})/2$ for ${\rm char}(k)\not=2$ and otherwise $\sqrt{D}= a$.
Binet's formula gives
\begin{align}\label{Q_nerrorestimate} \| {\tt Q}_{n}f\| =  q^{ -(n+1) d } , \end{align}
since \[  \| {\tt Q}_{n}f\|=|{\tt Q}_{n}f - {\tt Q}_{n+1}| = 
 \frac{|f^{\ast} |^{n+1} |f-f^{\ast}|}{|\sqrt{D}|}  =
 |f^{\ast} |^{n+1}
=q^{ -(n+1) d }<1 .\]

The set \begin{align}\label{Tpowbasis} 
\mathcal{B}=\{ T^{d-1}{\tt Q}_{0}, \dots , T{\tt Q}_{0}, {\tt Q}_{0} ; T^{d-1}{\tt Q}_{1}, \dots , T{\tt Q}_{1}, {\tt Q}_{1};\dots  \}\end{align}
is a basis of $A$, since it has exactly one element for each possible polynomial degree.   
The order in which we have written the basis elements corresponds to decreasing errors.  Indeed,
\begin{align}\label{basiserrors}\| T^{l}{\tt Q}_{n}f \| =|T^{l}{\tt Q}_{n}f-T^{l}{\tt Q}_{n+1}|= q^{l-(n+1)d}<1.  \end{align}
In particular, (\ref{basiserrors}) shows that the map \begin{align}\label{errorbijection}  \mathcal{B}\longrightarrow q^{-\N},\quad  T^{l}{\tt Q}_{n} \longmapsto
\| T^{l}{\tt Q}_{n}f \| \end{align}
defines a bijection between $\mathcal{B}$ and the set of possible errors. 

Write 
 \[ \mathcal{B}(i) = \{ T^{d-1}{\tt Q}_{i},\dots , {\tt Q}_{i}\}\]  
 for the $i$th block of $\mathcal{B}$.
Furthermore, for $0\leq \tilde{d}\leq d-1$, denote
\[ \mathcal{B}(i)_{\tilde{d}} = \{T^{\tilde{d}}{\tt Q}_{i},\dots , {\tt Q}_{i}\}.\]

The following result appears in \cite{DGII}; being fundamental, we include it here as well.

\begin{lemm}\label{explicitLambda}  Let $l\in \{ 0,\dots ,d-1\}$
and write
\[ d_{l}=d-1-l.\]   Then 
\begin{align*} 
 \Uplambda_{q^{-Nd-l}}(f )={\rm span}_{\F_{q}} ( \mathcal{B}(N)_{d_{l}},\mathcal{B}(N+1),\dots   ) .
\end{align*}
\end{lemm}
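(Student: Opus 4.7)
\bigskip

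The plan is to exploit two facts: that $\mathcal{B}$ is an $\F_q$-basis of $A$, and that the error map \eqref{errorbijection} is a bijection onto $q^{-\N}$, so that distinct basis elements produce errors of distinct absolute value.

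First I would write any $\uplambda \in A$ uniquely as a finite $\F_q$-linear combination $\uplambda = \sum c_{l',n'} \, T^{l'}{\tt Q}_{n'}$ with coefficients in $\F_q$. For each basis element $b = T^{l'}{\tt Q}_{n'}$, estimate \eqref{basiserrors} provides a polynomial $a_{l',n'} \in A$ with $bf = a_{l',n'} + r_{l',n'}$ and $|r_{l',n'}| = q^{l'-(n'+1)d} < 1$. Multiplying by $c_{l',n'} \in \F_q$ does not change absolute values, so
\[
\uplambda f = \underbrace{\sum c_{l',n'}\, a_{l',n'}}_{\in A} \; + \; \sum c_{l',n'}\, r_{l',n'}.
\]
Because the nonzero $|r_{l',n'}|$ are pairwise distinct (by \eqref{errorbijection}) and $|c_{l',n'}| \in \{0,1\}$, the ultrametric inequality becomes an equality on the error sum, giving
\[
\|\uplambda f\| \;=\; \max_{c_{l',n'}\neq 0} \; q^{\,l'-(n'+1)d},
\]
provided this max is $<1$ (which it is whenever $\uplambda$ involves only basis elements with $n' \geq 1$ or with large enough $l' - (n'+1)d$; the remaining cases are harmless as they only make $\|\uplambda f\|$ larger). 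The key upshot is that $\uplambda \in \Uplambda_{q^{-Nd-l}}(f)$ if and only if \emph{every} basis element appearing with nonzero coefficient in $\uplambda$ already satisfies $\|bf\| < q^{-Nd-l}$.

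The proof then reduces to determining exactly which $(l',n')$ satisfy $q^{\,l'-(n'+1)d} < q^{-Nd-l}$, i.e.\ $l' + l < (n'+1-N)d$. A short case analysis on $n'$ settles this:
\begin{itemize}
\item[] $n' < N$: the right-hand side is $\leq 0$, so no $l' \in \{0,\dots,d-1\}$ works;
\item[] $n' = N$: the condition becomes $l' \leq d-1-l = d_l$, selecting exactly $\mathcal{B}(N)_{d_l}$;
\item[] $n' \geq N+1$: since $l' + l \leq 2(d-1) < 2d \leq (n'+1-N)d$, every $l' \in \{0,\dots,d-1\}$ works, selecting all of $\mathcal{B}(n')$.
\end{itemize}
Collecting the admissible basis elements yields the claimed span.

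The only real subtlety — and the main thing to get right — is the passage from ``$\|\uplambda f\|$ small'' to ``each summand has small error.'' That step is not automatic in general ultrametric settings, but it works here because the individual errors $q^{l'-(n'+1)d}$ exhaust $q^{-\N}$ injectively via \eqref{errorbijection}, so there can be no cancellation that produces a sum with strictly smaller norm than the dominant term. Once that point is made explicit, the rest is the arithmetic inequality above.
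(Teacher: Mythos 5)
Your proof is correct and follows essentially the same route as the paper: both arguments rest on the basis $\mathcal{B}$, the explicit error formula (\ref{basiserrors}), and the bijection (\ref{errorbijection}) between basis elements and errors, which forces $\|\uplambda f\|$ to equal the maximal error among the basis elements appearing in $\uplambda$ (no ultrametric cancellation). Your write-up merely makes explicit the case analysis and the equality $\|\uplambda f\|=\max_{c_{l',n'}\neq 0}q^{\,l'-(n'+1)d}$ that the paper leaves implicit; note also that your caveat about the max possibly being $\geq 1$ is vacuous, since $l'-(n'+1)d\leq -1$ for all basis elements.
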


\begin{proof}  Note that 
${\rm span}_{\F_{q}} ( \mathcal{B}(N)_{d_{l}},\mathcal{B}(N+1),\dots   ) \subset  \Uplambda_{q^{-Nd-l}}(f )$.
Moreover, by (\ref{basiserrors}), $ \Uplambda_{q^{-Nd-l}}(f )$ contains no other elements of $\mathcal{B}$. 
In view of the bijection (\ref{errorbijection}), no linear combination of the excluded basis elements could appear
in $ \Uplambda_{q^{-Nd-l}}(f )$.  
\end{proof}

We will now describe a Dedekind domain $A_{\infty_{1}}$ over which the $\Uplambda_{\upvarepsilon}(f)$ are ``almost'' modules.
Denote  
\[ K=k(f)\subset k_{\infty}.\]  Then $K$ is the function field of 
the projective curve $\Upsigma_{f}\subset\PR^{2}(\F_{q})$ defined by the equation (in the variables $X$ and $T$) \[ X^{2}-a(T)X-b=0.\]   
$\Upsigma_{f}$ has a singularity at $\infty$ for $d>1$, so we replace it by 
a smooth birationally equivalent model $\Upsigma$, which by construction is hyperelliptic, and whose genus $g$
satisfies the inequality (see \cite{Liu}, page 294, Proposition 4.24)
\[   2g+1 \leq \max \{ 2\deg(a(T)),\deg (b)  \}=2\deg(a(T))\leq 2g+2 . \]
From this it follows that $g=d-1$.   Let
\[ \uppi:\Upsigma\rightarrow \PR^{1}\] be a morphism inducing
the inclusion $k\hookrightarrow K$.  
  We assume that $\uppi$ is unramified over $\infty\in\PR^{1}$ which means that the valuation $v_{\infty}$  on $k$ has two extensions to $K$.  Picking
an extension amounts to picking a point $\infty_{1}\in \uppi^{-1}(\infty )$.  Denote by
\[ A_{\infty_{1}}\subset K \] the Dedekind domain of functions regular outside of $\infty_{1}$.  Since $\infty_{1}$ is not a branch point and $\Upsigma$ is hyperelliptic, it follows that $\infty_{1}$ is not a Weierstrass point.

Note that with respect to the extension  $v_{\infty_{1}}$ of the valuation $v_{\infty}$ to $K$ given by $\infty_{1}$, we have $v_{\infty_{1}}(f)=-d$.  Thus, as $\infty_{1}$ is not a Weierstra\ss\ point, its gap sequence is $1,\dots , g=d-1$, so $f$
is an element having pole of smallest order $=d$ at $\infty_{1}$.  It follows that
we may identify
\[ A_{\infty_{1}}= \F_{q}[f, fT,\dots , fT^{d-1}],\]  
see \cite{HKT}, page 188.
For example, when $d=1$, we obtain the familiar polynomial ring $A_{\infty_{1}}=\F_{q}[f]\cong \F_{q}[T]$, and when $d=2$, we have  
$ A_{\infty_{1}}=\F_{q}[f,fT] \cong \F_{q}[\wp,\wp']$,
where $\wp,\wp'$ are Weierstra\ss\ coordinates on the elliptic curve $\Upsigma$.

Note  that had we chosen $\infty_{2}\in \uppi^{-1}(\infty )$ instead of $\infty_{1}$ we would have
obtained
\[ A_{\infty_{2}}= \F_{q}[f^{-1}, f^{-1}T,\dots , f^{-1}T^{d-1}].\] 
This is because the Galois group of $K/k$ permutes the valuations associated to $\infty_{1}$ and $\infty_{2}$ and takes $f$ to a constant multiple
of $f^{-1}$.
In particular, $f$ has a zero of order $d$ at $\infty_{2}$.

Consider again the collection of $\F_{q}$-vector spaces $\{ \Uplambda_{\upvarepsilon}(f)\}$ introduced above.  
We will now show that after appropriately ``renormalizing'' the family $ \{ \Uplambda_{\upvarepsilon} (f)\}$ we 
obtain in the $\upvarepsilon\rightarrow 0$ limit a finite set of (analytic) rank 1 $A_{\infty_{1}}$-modules: a ``multivalued'' $A_{\infty_{1}}$-module.
For $\upvarepsilon_{N,l}:=q^{-dN-l}$, $l=0,\dots , d-1$, consider the rescaled vector space
\[ \hat{\Uplambda}_{\upvarepsilon_{N,l}}(f) := f^{-N}\sqrt{D}\Uplambda_{\upvarepsilon}(f) \]
Define as well the ideals
\[ \mathfrak{a}_{d-1-l}  := (f,fT,\dots ,fT^{d-1-l})\subset A_{\infty_{1}} .\]
\begin{prop}\label{renormalizedlimit}  For each $l=0,\dots , d-1$ we have 
\[ \lim_{N\rightarrow\infty}  \hat{\Uplambda}_{\upvarepsilon_{N,l}}(f) = \mathfrak{a}_{d-1-l}  \]
where the convergence is in the Hausdorff metric on subsets of $k_{\infty}$.
\end{prop}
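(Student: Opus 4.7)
The plan is to use Lemma~\ref{explicitLambda} together with Binet's formula to identify the limits of the rescaled basis elements in $k_{\infty}$, and then match the resulting $\F_{q}$-span with the ideal $\mathfrak{a}_{d-1-l}$ via a pole-order calculation at $\infty_{1}$.

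First I would expand $\hat{\Uplambda}_{\upvarepsilon_{N,l}}(f)$ using Lemma~\ref{explicitLambda} as the $\F_{q}$-span of the vectors $f^{-N}\sqrt{D}\,T^{j}{\tt Q}_{n}$ with either $n=N$ and $0\leq j\leq d-1-l$, or $n=N+i$ with $i\geq 1$ and $0\leq j\leq d-1$. Binet's formula gives
\[
f^{-N}\sqrt{D}\,T^{j}{\tt Q}_{N+i}\;=\;T^{j}f^{i+1}\;-\;T^{j}(f^{\ast})^{N+i+1}/f^{N},
\]
with error absolute value at most $q^{-1-l-2dN}$ uniformly over the allowed range of $(n,j)$. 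Let $V$ denote the $\F_{q}$-span of the ``limit generators'' $\{T^{j}f:0\leq j\leq d-1-l\}\cup\{T^{j}f^{i+1}:i\geq 1,\,0\leq j\leq d-1\}$; this is the candidate for the Hausdorff limit.

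Next I would identify $V$ with $\mathfrak{a}_{d-1-l}$. The containment $V\subseteq\mathfrak{a}_{d-1-l}$ is immediate: $T^{j}f$ is a generator when $j\leq d-1-l$, and for $i\geq 1$ one has $T^{j}f^{i+1}=(T^{j}f^{i})\cdot f$ with $T^{j}f^{i}\in A_{\infty_{1}}$ since $v_{\infty_{2}}(T^{j}f^{i})=id-j\geq 0$. For the reverse containment I would use pole orders at $\infty_{1}$: the generators of $V$ realize precisely $\{d,d+1,\dots,2d-1-l\}\cup\{2d,2d+1,\dots\}$ with exactly one generator per pole order, and writing any element of $\mathfrak{a}_{d-1-l}$ via the $\F_{q}$-basis $\{1\}\cup\{f^{I}T^{K}:I\geq 1,\,0\leq K\leq d-1\}$ of $A_{\infty_{1}}$ (itself indexed by the non-gap pole orders of $\infty_{1}$) yields the same set of realized pole orders. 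An induction on pole order — matching leading coefficients, which lie in $\F_{q}$ since $\infty_{1}/\infty$ is unramified — then forces $\mathfrak{a}_{d-1-l}\subseteq V$.

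Finally, for the Hausdorff convergence on any ball of radius $R$ in $k_{\infty}$: any $x\in\hat{\Uplambda}_{\upvarepsilon_{N,l}}(f)$ of absolute value $\leq q^{R}$ differs from its leading-Binet counterpart in $V=\mathfrak{a}_{d-1-l}$ by at most $q^{-1-l-2dN}$ in absolute value, and conversely any $y\in\mathfrak{a}_{d-1-l}$ with $v_{\infty_{1}}(y)\geq -R$ uses only finitely many of the limit generators (the pole-order bound together with the non-archimedean property restricts which generators appear), each approximable by its rescaled preimage. The main obstacle I expect is the pole-order bookkeeping in the identification of $V$ with $\mathfrak{a}_{d-1-l}$, which hinges essentially on $\infty_{1}$ not being a Weierstra\ss\ point so the gap sequence is the standard $1,\dots,d-1$.
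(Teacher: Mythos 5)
Your proposal is correct and follows essentially the same route as the paper: Lemma~\ref{explicitLambda} to describe $\Uplambda_{N,l}$, Binet's formula to show $f^{-N}\sqrt{D}\,T^{j}{\tt Q}_{N+i}\to T^{j}f^{i+1}$ uniformly (your bound $q^{-1-l-2dN}$ matches the computation the paper records in Lemma~\ref{shiftlemma}), and then passage to the Hausdorff limit. The only difference is that you additionally verify, via pole orders at $\infty_{1}$ and the valuation at $\infty_{2}$, that the $\F_{q}$-span of the limit generators really equals $\mathfrak{a}_{d-1-l}$ — a point the paper treats as immediate — and that verification is sound.
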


\begin{proof}  By Lemma \ref{explicitLambda}, we have 
\[ \Uplambda_{N,l} ={\rm span}_{\F_{q}} (T^{d-1-l}{\tt Q}_{N},\dots , {\tt Q}_{N}, T^{d-1}{\tt Q}_{N+1},\dots ). \]
Using Binet's formula to replace each ${\tt Q}_{N+i}$ by $(f^{N+i+1}-(f^{\ast})^{N+i+1})/\sqrt{D}$, and the fact that $|f^{\ast}|<1$, in the limit $N\rightarrow \infty$
\[  f^{-N}\sqrt{D}{\tt Q}_{N+i} \rightarrow f^{i+1} \] uniformly in $i$.
\end{proof}

Thus we may regard the renormalized sequence of vector spaces $\{\hat{\Uplambda}_{\upvarepsilon}(f)\}$ as producing the multivalued limit
\[ \lim_{\upvarepsilon\rightarrow 0} \hat{\Uplambda}_{\upvarepsilon}(f) = \{\mathfrak{a}_{0},\dots ,\mathfrak{a}_{d-1} \} \]
which reinforces to a certain extent the quantum terminology.  This multivalued quality is a defining feature of all of the quantum objects/functions encountered in this work.
We end this section by developing this point in more detail: giving a rudimentary notion of (analytic) quantum Drinfeld module.

It is implicit in the statement of Proposition \ref{renormalizedlimit} that the elements of the collection $ \{ \hat{\Uplambda}_{\upvarepsilon} (f)\}$ define ``approximate'' $A_{\infty_{1}}$-modules.  
The following Lemma makes this precise.  In what follows we write for $X,Y\subset k_{\infty}$
\[ X\subset_{\upvarepsilon}Y \]
if ${\rm dist}(x,Y)<\upvarepsilon$ for all $x\in X$, where 
${\rm dist}(x,Y)=\inf_{y\in Y}|x-y|$.

\begin{lemm}\label{shiftlemma}  Fix $\upalpha \in A_{\infty_{1}}$, let $\upvarepsilon=\upvarepsilon_{N,l}$ and suppose that 
$\updelta=\updelta_{N,l} :=q^{-dN}\upvarepsilon$ satisfies
$\updelta<|\upalpha|^{-1}$.  Then
\begin{align*}
\upalpha \hat{\Uplambda}_{\upvarepsilon}(f)\subset _{|\upalpha|\updelta}    \hat{ \Uplambda}_{\upvarepsilon}(f) . \end{align*}
\end{lemm}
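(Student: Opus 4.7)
The plan is to reduce the statement to the case where $\upalpha$ is one of the generators $fT^{j}$ ($0\leq j\leq d-1$) of $A_{\infty_{1}}$ as an $\F_{q}$-algebra, and then exploit the quadratic equation $f^{2}=af+b$. Since $\hat{\Uplambda}_{\upvarepsilon}(f)$ is an $\F_{q}$-vector space and $A_{\infty_{1}}=\F_{q}[f,fT,\ldots,fT^{d-1}]$, the desired approximate inclusion behaves well under $\F_{q}$-linear combinations and products: the ultrametric property gives $\upbeta\upgamma\hat{\Uplambda}\subset_{|\upbeta\upgamma|\updelta}\hat{\Uplambda}$ whenever $\upbeta$ and $\upgamma$ each satisfy their own inclusion and $|\upgamma|\geq 1$, and every generator $fT^{j}$ has $|fT^{j}|=q^{d+j}>1$. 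This reduces the argument to the base case $\upalpha=fT^{j}$ with $0\leq j\leq d-1$.

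For the base case I would fix $\uplambda\in\Uplambda_{\upvarepsilon}(f)$ and write $\uplambda f=m+r$ with $m\in A$ the polynomial part and $|r|<\upvarepsilon$, so that $\upalpha\uplambda=T^{j}(\uplambda f)=T^{j}m+T^{j}r$. The crucial subclaim is that $T^{j}m\in\Uplambda_{\upvarepsilon}(f)$. Substituting $m=\uplambda f-r$ into $T^{j}m\cdot f$ and using $f^{2}=af+b$ together with the Vieta identity $a-f=f^{*}$ gives
\[
T^{j}m\cdot f=T^{j}(am+b\uplambda)+T^{j}rf^{*},
\]
where $T^{j}(am+b\uplambda)\in A$ and $|T^{j}rf^{*}|=q^{j-d}|r|<q^{j-d}\upvarepsilon<\upvarepsilon$ because $j\leq d-1$. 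Hence $T^{j}m\in\Uplambda_{\upvarepsilon}(f)$, and the residual after rescaling is $f^{-N}\sqrt{D}\cdot T^{j}r$, whose absolute value is at most $q^{-dN}\cdot q^{d}\cdot q^{j}\cdot\upvarepsilon=|fT^{j}|\updelta=|\upalpha|\updelta$, exactly the claimed bound. The hypothesis $\updelta<|\upalpha|^{-1}$ merely ensures $|\upalpha|\updelta<1$, the regime in which the approximate inclusion is informative.

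The main obstacle is the subclaim $T^{j}m\in\Uplambda_{\upvarepsilon}(f)$ at the heart of the base case. A priori the term $T^{j}rf$ (of size $\upvarepsilon\cdot q^{d+j}$) would ruin the estimate, but the Vieta identity $a-f=f^{*}$ replaces it by $T^{j}rf^{*}$ (of size $\upvarepsilon\cdot q^{j-d}$), and the degree restriction $j\leq d-1$ is precisely what forces $q^{j-d}<1$. This is the one point at which both the quadratic nature of $f$ and the explicit description of the generators of $A_{\infty_{1}}$ enter, and it is what underlies the approximate $A_{\infty_{1}}$-module structure of the renormalized sequence $\{\hat{\Uplambda}_{\upvarepsilon}(f)\}$ already visible in Proposition~\ref{renormalizedlimit}.
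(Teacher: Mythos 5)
Your argument is correct, but it takes a genuinely different route from the paper's. The paper proves the lemma by leaning on the explicit basis of $\hat{\Uplambda}_{\upvarepsilon}(f)$ from Lemma~\ref{explicitLambda}: via Binet's formula each rescaled basis element $\sqrt{D}f^{-N}{\tt Q}_{N+i}T^{j}$ is shown to lie within $\updelta$ of the corresponding generator $f^{i+1}T^{j}$ of the limiting ideal $\mathfrak{a}_{d-1-l}$, and the approximate $A_{\infty_{1}}$-action on $\hat{\Uplambda}_{\upvarepsilon}(f)$ is then inherited from the honest ideal structure of $\mathfrak{a}_{d-1-l}$ (the hypothesis $\updelta<|\upalpha|^{-1}$ is used there to make the nearby element $\uplambda'$ unique). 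You instead work intrinsically with the defining condition $\|\uplambda f\|<\upvarepsilon$, never invoking the ${\tt Q}_{n}$ or Proposition~\ref{renormalizedlimit}: the Vieta substitution $a-f=f^{\ast}$ turning $T^{j}rf$ into $T^{j}rf^{\ast}$ is exactly the right mechanism, the error bookkeeping $|f^{-N}\sqrt{D}T^{j}r|<q^{d+j}\updelta=|\upalpha|\updelta$ is correct, and your base case handles an arbitrary $\uplambda\in\Uplambda_{\upvarepsilon}(f)$ rather than just basis vectors. What your approach buys is self-containment and a transparent explanation of \emph{why} the quadratic relation and the specific generators $fT^{j}$, $j\le d-1$, produce the module structure; what the paper's buys is brevity given the machinery already in place, plus the uniqueness statement that is reused when the connecting maps are assembled. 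One small point to tighten in your reduction step: when you pass from the generators to a general $\upalpha$ by $\F_{q}$-linear combinations, the bound $\max_{s}|\upbeta_{s}|\,\updelta$ only equals $|\upalpha|\updelta$ if the decomposition $\upalpha=\sum\upbeta_{s}$ is cancellation-free; you should decompose $\upalpha$ into its normal form $c_{0}+\sum c_{i}(T)f^{i}$ with $\deg c_{i}\le d-1$, whose monomials $T^{j}f^{i}$ have pairwise distinct absolute values, so that $|\upalpha|$ really is the maximum of the terms.
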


\begin{proof}  Let $\uplambda \in  \hat{\Uplambda}_{\upvarepsilon}(f)$.  Thus
\[ \uplambda =\sqrt{D}f^{-N}{\tt Q}_{N+i}T^{j}\]
where $i\geq 0$ and $0\leq j \leq d-1$ except when $i=0$ in which case $0\leq j \leq d-1-l$.  Then by Binet's formula
\begin{align*}  
|\uplambda -f^{i+1}T^{j} | & = | T^{j}(f^{\ast})^{N+i+1}f^{-N}| \\
& = q^{-(2N+i+1)d +j}\leq q^{-(2N+1)d +d-1-l}=q^{-(2Nd+l+1)} < \updelta.
 \end{align*}
Moreover the above inequality shows that every element of $\mathfrak{a}_{d-1-l}$ is within $\updelta$ of a unique element of $\hat{\Uplambda}_{\upvarepsilon}(f)$.
Now for $\upalpha \in A_{\infty_{1}}$, 
\[ |\upalpha\uplambda-\upalpha f^{i+1}T^{j}|< |\upalpha| \updelta\]
and since $|\upalpha\updelta|<1$, there exists a unique $\uplambda'\in \hat{\Uplambda}_{\upvarepsilon}(f)$ such that $|\uplambda'-\upalpha f^{i+1}T^{j}|<|\upalpha|\updelta$
as well.  Then $|\upalpha\uplambda-\uplambda'|<|\upalpha|\updelta$ and this proves the Lemma.
\end{proof}

Let ${\bf C}_{\infty}$ be the analytic completion of the algebraic closure of $k_{\infty}$.
For each $0<\updelta<1$ we consider the sub $\F_{q}$ vector space
\[ \mathfrak{Z}_{\updelta}:= \{ z\in {\bf C}_{\infty}|\; |z|<\updelta\}\] and define 
\[ \hat{\D}_{ \upvarepsilon, \updelta ,}(f) := {\bf C}_{\infty}/\left( \hat{\Uplambda}_{\upvarepsilon} (f) +\mathfrak{Z}_{\updelta}\right) . \]
Then for $\upalpha\in A_{\infty_{1}}$ and for each $\upvarepsilon=\upvarepsilon_{N,l}$ in which $\updelta_{N,l}=q^{-dN}\upvarepsilon<|\upalpha|^{-1}$,
 Lemma \ref{shiftlemma} implies that for any $\updelta$ with $\updelta_{N,l}<\updelta <|\upalpha|^{-1}$ 
there is a linear map 
\begin{align}\label{mapinducedbyalpha1}  \upalpha= \upalpha_{\upvarepsilon,\updelta}: \hat{\D}_{\upvarepsilon,\updelta }(f) \longrightarrow  \hat{\D}_{\upvarepsilon,|\upalpha|\updelta}(f) ,\quad z\longmapsto \upalpha z.\end{align}
If we have in addition $\updelta|\upalpha|<\updelta' <1$
we may compose the maps (\ref{mapinducedbyalpha1}) with the canonical projections 
\[  \hat{\D}_{\upvarepsilon,|\upalpha|\updelta}(f) \longrightarrow \hat{\D}_{\upvarepsilon,\updelta'}(f) , 
\]
which gives the family of maps 
\begin{align}\label{mapinducedbyalpha2}  \upalpha= \upalpha_{\upvarepsilon,\updelta,\updelta'}: \hat{\D}_{\upvarepsilon,\updelta }(f) \longrightarrow  \hat{\D}_{\upvarepsilon,\updelta'}(f),
\quad \upvarepsilon q^{-d}<\updelta <|\upalpha|^{-1},\;\;  \updelta|\upalpha|<\updelta' <1.
 \end{align}
We call such a family of maps an  {\bf approximate $A_{\infty_{1}}$-action}.   
The approximate $A_{\infty_{1}}$-action defined above satisfies evident analogs of the properties of a module.  That is,  for $\upalpha,\upbeta\in A_{\infty_{1}}$, 
there are commutative diagrams
\begin{diagram}
 \hat{\D}_{\upvarepsilon,\updelta}(f) & & \rTo^{\upalpha\upbeta} & & \hat{\D}_{\upvarepsilon,\updelta''}(f) \\
& \rdTo^{\upalpha} & & \ruTo^{\upbeta} & \\
& & \hat{\D}_{\upvarepsilon,\updelta'}(f)  & &
\end{diagram}
subject to the condition that all maps appearing belong to the families labeled respectively by $\upalpha,\upbeta,\upalpha\upbeta$.
Similarly, if we have
\[    \upalpha,\upbeta,\upalpha+\upbeta: \hat{\D}_{\upvarepsilon,\updelta}(f)\longrightarrow   \hat{\D}_{\upvarepsilon, \updelta'}(f)  \]
then 
\[ \upalpha\cdot z+\upbeta\cdot z =(\upalpha+\upbeta)\cdot z, \quad \forall z\in\hat{\D}_{\upvarepsilon,\updelta}(f). \]
We may refer to the above structure as an {\bf approximate $A_{\infty_{1}}$-module}.  The limiting ideal $\mathfrak{a}_{d-1-l}$ also
defines such a structure via 
\[  \D_{d-1-l,\updelta}:= {\bf C}_{\infty}/(\mathfrak{a}_{d-1-l}+ \mathfrak{Z}_{\updelta}),\] 
where $\upalpha\in A_{\infty_{1}}$ now acts on all of the $ \D_{d-1-l,\updelta}$.
This structure is the limit of the above in the sense that for any $\updelta$
and $\upvarepsilon=\upvarepsilon_{N,l}$ sufficiently small,
\[   \hat{\D}_{\upvarepsilon,\updelta}(f) =\D_{d-1-l,\updelta}.  \]


Consider now the simple quotients
\[  \hat{\D}_{\upvarepsilon}(f) := {\bf C}_{\infty}/\hat{\Uplambda}_{\upvarepsilon}(f) .\]
In view of Proposition \ref{renormalizedlimit},  we will
informally define the associated {\bf (analytic) quantum Drinfeld module} as 
\[ \hat{\D}^{\rm qt} (f):=`` \lim_{\upvarepsilon\rightarrow 0} \hat{\D}_{\upvarepsilon}(f)\text{''}  = \{ \D_{0}, \dots , \D_{d-1}\} , \quad \D_{i} := {\bf C}_{\infty}/\mathfrak{a}_{i}.\]
One thinks of $ \hat{\D}^{\rm qt} (f)$ as a ``multivalued'' Drinfeld module, whose ``multivalues'' are the rank $1$ Drinfeld modules $ \D_{i}$.
This definition is not yet rigorous, as we have not yet made precise what is meant by a ``point'' of $ \hat{\D}^{\rm qt} (f)$.  To do this, we must make formal the status of the limit, which will be done by defining connecting maps between the various $ \hat{\D}_{\upvarepsilon}(f)$.  The latter requires 
the algebraic complement of this analytic picture -- {\it the algebraic notion of quantum Drinfeld module} -- which will provide the notion of a ``multi-point'':  a Galois orbit of the shape
\[ z^{\rm qt}=\{ z_{i}|\; z_{i}\in \D_{i}\}.\] 
This will be done in the sequel to this paper \cite{DGIII}.

\section{The Quantum $j$-Invariant in Positive Characteristic}\label{Quantumjsection}

We begin by recalling the classical $j$-invariant in positive characteristic \cite{Gek}, \cite{Gek1}.  Let ${\bf C}_{\infty}$ be the analytic completion of the algebraic closure of $k_{\infty}$.  By a {\it lattice} $\Uplambda\subset {\bf C}_{\infty}$ is meant a discrete $A$-submodule of finite rank.  In what follows, we restrict to lattices
of rank 2 e.g.\ 
$\upomega\in\Upomega = {\bf C}_{\infty}- k_{\infty}$ defines the rank 2 lattice
$\Uplambda (\upomega) = \langle 1, \upomega\rangle_{A}=$ the $A$-module generated by $1,\upomega$.
Given a lattice $\Uplambda\subset  {\bf C}_{\infty}$, the {\it Eisenstein series} are
\[  E_{n}(\Uplambda) = \sum_{0\not=\uplambda\in\Uplambda} \uplambda^{-n},\quad n\in\N.\]
The {\it discriminant} is the expression
\[  \Updelta (\Uplambda) := (T^{q^{2}}-T)E_{q^{2}-1} (\Uplambda)+ (T^{q}-T)^{q}E_{q-1} (\Uplambda)^{q+1}\]
and setting
\[  g (\Uplambda) := (T^{q}-T)E_{q-1} (\Uplambda),\]
the (classical) {\it $j$-invariant} is defined
\[ j (\Uplambda) :=\frac{g (\Uplambda)^{q+1}}{\Updelta (\Uplambda)} = \frac{1}{\frac{1}{T^{q}-T}  + J  (\Uplambda)} \]
where
\[ J  (\Uplambda):=  \frac{T^{q^{2}}-T}{(T^{q}-T)^{q+1}} \cdot
\frac{E_{q^{2}-1} (\Uplambda)}{E_{q-1} (\Uplambda)^{q+1}}.
\]
When we restrict to $\Uplambda = \Uplambda (\upomega)$ we obtain a well-defined modular function 
\[  j: {\rm PGL}_{2}(A)\backslash\Upomega\longrightarrow  {\bf C}_{\infty}.\] 

We now introduce a quantum analog of the function $j$.  Let $f\in k_{\infty}$ (not necessarily quadratic).  The {\it $\upvarepsilon$-zeta function} of $f$ is:
\[ \upzeta_{f,\upvarepsilon}(m):=\sum_{\uplambda\in \Uplambda_{\upvarepsilon}(f)-\{0\}\atop
\uplambda\text{ monic }}\uplambda^{-m} , \quad m\in \N . \]
Then $ \upzeta_{f,\upvarepsilon}(m)$ is convergent, being a subsum of  
the {\it zeta function} of $A$  \[  \upzeta_{A}(m) = \sum_{a\in A\text{ monic}}a^{-m}.\]  In what follows, we will be interested in values of
zeta functions at integers of the form $m=n(q-1)$, $n\in\N$, which play the role of even integers for such zeta functions. 
In this case, as
\[ \sum_{c\in \F_{q}-\{ 0\}} c^{n(1-q)}=-1,\] we may write
\[ \upzeta_{f,\upvarepsilon}(n(q-1))= 
-\sum_{\uplambda\in \Uplambda_{\upvarepsilon}(f)-\{0\}}\uplambda^{n(1-q)}.  \]
Define
\begin{align*} \Updelta_{\upvarepsilon} (f) &:=-(T^{q^{2}}-T)\upzeta_{f,\upvarepsilon}(q^{2}-1)+ (T^{q}-T)^{q}\upzeta_{f,\upvarepsilon}(q-1)^{q+1}
 \end{align*} and 
\[g_{\upvarepsilon} (f) :=-(T^{q}-T)\upzeta_{f,\upvarepsilon}(q-1).\]
Then the {\it $\upvarepsilon$-modular invariant} of $f$ is defined
\[ j_{\upvarepsilon}(f):=\frac{{g_{\upvarepsilon}}^{q+1}(f)}{\Updelta_{\upvarepsilon}(f)} = \frac{1}{\frac{1}{T^{q}-T}  -J_{\upvarepsilon}(f) } \]
where
\begin{align}\label{Jep} J_{\upvarepsilon}(f) :=  \frac{T^{q^{2}}-T}{(T^{q}-T)^{q+1}} \cdot
\frac{\upzeta_{f,\e}(q^{2}-1)}{\upzeta_{f,\e}(q-1)^{q+1}}.
\end{align}

\begin{defi} The {\it quantum modular invariant} or {\it quantum j-invariant}  of $f$ is 
\[  j^{\rm qt}(f) := \lim_{\e\rightarrow 0} j_{\upvarepsilon}(f)   \subset k_{\infty}\cup \{ \infty \},\]
where by $ \lim_{\e\rightarrow 0} j_{\upvarepsilon}(f) $ we mean the {\it set of limit points} of convergent sequences $\{ j_{\e_{i}}(f)\}$, $\e_{i}\rightarrow 0$.  
\end{defi}

\begin{note} The reader is advised that no powers of modular invariants appear in this paper so that there should be no danger of confusing the quantum modular invariant $j^{\rm qt}$ with an expression such as 
``$j^{q^{t}}$''.
\end{note}

We remark that the association
$f\mapsto j^{\rm qt}(f) $ is non-trivially multivalued:  see \cite{DGII} or Corollary \ref{multicoro} of this paper.  It is invariant with respect to the projective action of ${\rm GL}_{2}A$ on $k_{\infty}$ (i.e.\
$j^{\rm qt}(Mf)=j^{\rm qt}(f)$ for all $M\in {\rm GL}_{2}A$ and $f\in k_{\infty}-k$, see \cite{DGI}), and so defines
a multivalued function
\[ j^{\rm qt}(f): k_{\infty}/{\rm GL}_{2}A\multimap k_{\infty}\cup \{\infty\}.\]
The domain $k_{\infty}/{\rm GL}_{2}A$ may be interpreted as the compactified moduli space of ``quantum tori'' $\T (f) =k_{\infty}/\langle1, f\rangle_{A}$, where $\langle1, f\rangle_{A}$ is the $A$-module
generated by $1,f$.

The following Theorems are the main results of \cite{DGI}, \cite{DGII}.
\begin{theo} For all $f\in k_{\infty}-k$ and $\upvarepsilon <1$, $|j_{\upvarepsilon}(f)|\in \{q^{q^{2}} ,q^{q^{2}+q-1}\} $.  
\end{theo}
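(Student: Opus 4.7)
My plan is to compute $|g_\e(f)^{q+1}|$ and $|\Updelta_\e(f)|$ separately and take their ratio, exploiting the ultrametric geometry of $k_\infty$ and the $\F_q$-vector space structure of $\Uplambda_\e(f)$.  Let $\uplambda_0 \in \Uplambda_\e(f)$ be a nonzero element of minimal absolute value, so $\F_q^\ast\uplambda_0$ consists of all elements achieving this minimum.  For $n \in \{q-1,\, q^2-1\}$, both divisible by $q-1$, we have $\sum_{c \in \F_q^\ast} c^{-n} = q - 1 \equiv -1 \pmod{p}$; hence the $\F_q^\ast\uplambda_0$-contribution to $\upzeta_{f,\e}(n) = \sum_{\text{monic}} \uplambda^{-n}$ equals $\uplambda_0^{-n}$, and by the strong triangle inequality every other contribution is strictly smaller.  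This yields $|\upzeta_{f,\e}(n)| = |\uplambda_0|^{-n}$ for both values of $n$, and consequently $|g_\e(f)^{q+1}| = q^{q(q+1)}\,|\uplambda_0|^{1-q^2}$.

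For $|\Updelta_\e(f)|$ the two defining summands each have absolute value $q^{q^2}|\uplambda_0|^{1-q^2}$, so one must track the cancellation.  Using the Frobenius identity $(T^q-T)^q = T^{q^2}-T^q$ in characteristic $p$ together with $-(T^{q^2}-T) + (T^{q^2}-T^q) = -(T^q-T)$, I would rewrite
\[ \Updelta_\e(f) = -(T^q-T)\,\upzeta_{f,\e}(q^2-1) \;+\; (T^{q^2}-T^q)\bigl[\upzeta_{f,\e}(q-1)^{q+1} - \upzeta_{f,\e}(q^2-1)\bigr]. \]
The crux is to estimate the bracketed difference.  Using Frobenius to write $\upzeta_{f,\e}(q-1)^q = -\sum \uplambda^{q-q^2}$, the product $\upzeta_{f,\e}(q-1)^{q+1}$ becomes a double sum $\sum_{\uplambda,\upmu} \uplambda^{1-q}\upmu^{q-q^2}$ whose diagonal portion is exactly $-\upzeta_{f,\e}(q^2-1)$.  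Writing $\upzeta_{f,\e}(q-1) = \uplambda_0^{1-q}(1+\eta_1)$ and $\upzeta_{f,\e}(q^2-1) = \uplambda_0^{1-q^2}(1+\eta_2)$ and partitioning the off-diagonal sum by whether $\uplambda,\upmu$ lie in $\F_q^\ast\uplambda_0$, a careful case analysis collapses to
\[ \upzeta_{f,\e}(q-1)^{q+1} - \upzeta_{f,\e}(q^2-1) = \uplambda_0^{1-q^2}\bigl[(1+\eta_1)^{q+1} - 1 - \eta_2\bigr] = \uplambda_0^{1-q^2}\bigl[\eta_1 + \eta_1^q + \eta_1^{q+1} - \eta_2\bigr]. \]
One then shows $|\eta_1| = q^{1-q}$ dominates the other terms in the bracket, as $|\eta_1^q|, |\eta_1^{q+1}|, |\eta_2|$ are bounded by $q^{q-q^2}$ and $q^{1-q^2}$, both strictly smaller than $q^{1-q}$ for $q \geq 2$.

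Combining, the $(T^{q^2}-T^q)$-term of $\Updelta_\e(f)$ has absolute value $q^{q^2-q+1}|\uplambda_0|^{1-q^2}$, which strictly dominates the $(T^q-T)$-term's $q^q|\uplambda_0|^{1-q^2}$ because $(q-1)^2 > 0$ for $q \geq 2$.  Hence $|\Updelta_\e(f)| = q^{q^2-q+1}|\uplambda_0|^{1-q^2}$, giving $|j_\e(f)| = q^{q(q+1)-(q^2-q+1)} = q^{2q-1}$.  The main obstacle is rigorously establishing $|\eta_1| = q^{1-q}$: this amounts to showing that $\Uplambda_\e(f)$ always contains a nonzero element of absolute value \emph{exactly} $q|\uplambda_0|$ (not merely $\geq q|\uplambda_0|$) whose contribution is not killed by the characteristic-$p$ reduction of coefficient counts.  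For quadratic $f$ this can be read off from the explicit basis in Lemma \ref{explicitLambda}; in the general setting it should follow from a function-field Dirichlet-type statement ruling out gaps in the ladder of minimal-norm elements of $\Uplambda_\e(f)$.
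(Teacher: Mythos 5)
The decomposition of $\Updelta_\e(f)$ and the algebraic identity $\upzeta_{f,\e}(q-1)^{q+1}-\upzeta_{f,\e}(q^2-1)=\uplambda_0^{1-q^2}\bigl[\eta_1+\eta_1^q+\eta_1^{q+1}-\eta_2\bigr]$ are fine, and the computation of $|g_\e(f)^{q+1}|$ is correct. (The paper itself does not prove this statement --- it cites Theorem~3 of \cite{DGI} --- so the comparison below is with the closely parallel estimates the paper does carry out.) The fatal step is the claim $|\eta_1|=q^{1-q}$. The second-lowest degree occurring in $\Uplambda_\e(f)$ contributes not a single term but the full coset of $q$ monic elements $\uplambda_1+c\uplambda_0$, $c\in\F_q$, so that $\eta_1$ is governed by $\sum_{c\in\F_q}(c+\upalpha_1)^{1-q}$ with $\upalpha_1=\uplambda_1/\uplambda_0$; the naive leading term $q\cdot\upalpha_1^{1-q}$ vanishes in characteristic $p$, and the exact value is $\frac{\upalpha_1^{q}-\upalpha_1}{\prod_c(c+\upalpha_1^{q})}$, of absolute value $|\upalpha_1|^{q(1-q)}$ --- this is precisely Lemma~\ref{omegalemma}(1) of the paper. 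Hence $|\eta_1|=|\upalpha_1|^{q(1-q)}\le q^{q-q^2}$, strictly smaller than your $q^{1-q}$. With the correct value, your bracketed term satisfies $\bigl|(T^{q^2}-T^q)\uplambda_0^{1-q^2}[\cdots]\bigr|\le q^{q^2}\cdot q^{q-q^2}|\uplambda_0|^{1-q^2}=q^{q}|\uplambda_0|^{1-q^2}$, which \emph{equals} rather than dominates the other summand $|(T^q-T)\upzeta_{f,\e}(q^2-1)|$; indeed when $|\upalpha_1|=q$ the two leading terms cancel exactly (one checks $(T^{q^2}-T^q)\sum_c(c+T)^{1-q}=T^q-T$ since $\prod_c(c+T^q)=T^{q^2}-T^q$), so the argument cannot terminate where you stop it and one is forced into the finer cancellation analysis of the type appearing in Lemma~\ref{DeltaLemma}.

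Your proposed repair --- that $\Uplambda_\e(f)$ contains an element of absolute value exactly $q|\uplambda_0|$ --- is also not available: it both fails to give $|\eta_1|=q^{1-q}$ for the reason above, and is false as stated. Lemma~\ref{explicitLambda} shows that for a quadratic unit with $d\ge 2$ and $\e=q^{-Nd-(d-1)}$ the degrees present in $\Uplambda_\e(f)$ jump from $Nd$ (the element ${\tt Q}_N$) directly to $(N+1)d$, so $|\upalpha_1|=q^{d}>q$ there; the ladder of degrees genuinely has gaps. So the proof has a real gap at its central estimate, and patching it requires confronting the characteristic-$p$ cancellations rather than assuming they do not occur.
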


\begin{proof}  Theorem 3 of \cite{DGI}, corrected in \cite{DGIe}.
\end{proof}

\begin{theo}\label{kcharact} $f\in k$ $\Leftrightarrow$ $ j_{\upvarepsilon}(f)=\infty$ for $\upvarepsilon$ sufficiently small $\Leftrightarrow$
$\infty\in j^{\rm qt}(f)$.
\end{theo}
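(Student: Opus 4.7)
The plan is to establish the chain of implications $f \in k \Rightarrow j_{\upvarepsilon}(f) = \infty$ for $\upvarepsilon$ sufficiently small $\Rightarrow \infty \in j^{\rm qt}(f) \Rightarrow f \in k$. The middle implication is immediate from the definition of $j^{\rm qt}(f)$ as the set of limit points along sequences $\upvarepsilon_{i} \to 0$: if every sufficiently small $\upvarepsilon$ gives $j_{\upvarepsilon}(f) = \infty$, then any such sequence is eventually constant equal to $\infty$.

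For the final implication $\infty \in j^{\rm qt}(f) \Rightarrow f \in k$, I would argue contrapositively. If $f \in k_{\infty} - k$, then by the theorem just established, $|j_{\upvarepsilon}(f)| = q^{2q-1}$ for every $\upvarepsilon < 1$. Since the values $j_{\upvarepsilon}(f)$ thus lie on a sphere of fixed finite radius, $\infty$ cannot be a limit point and hence cannot belong to $j^{\rm qt}(f)$.

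For the forward implication, write $f = P/Q$ in lowest terms with $Q \in A$ monic. I would first verify that $\Uplambda_{\upvarepsilon}(f) = QA$ whenever $\upvarepsilon \leq 1/|Q|$: the Euclidean division $\uplambda P / Q = a + r/Q$ with $\deg r < \deg Q$ yields $\|\uplambda f\| = |r|/|Q|$, which is zero iff $Q \mid \uplambda$ (using $\gcd(P,Q) = 1$) and is at least $1/|Q|$ otherwise. Summing over monic multiples of $Q$ then gives $\upzeta_{f,\upvarepsilon}(m) = Q^{-m}\, \upzeta_{A}(m)$ for every $m$, where $\upzeta_{A}$ is the Carlitz zeta function of $A$. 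Since $(q-1)(q+1) = q^{2}-1$, the factors of $Q$ cancel in the ratio appearing in (\ref{Jep}), leaving
\[
J_{\upvarepsilon}(f) = \frac{T^{q^{2}}-T}{(T^{q}-T)^{q+1}} \cdot \frac{\upzeta_{A}(q^{2}-1)}{\upzeta_{A}(q-1)^{q+1}}.
\]
Via the formula $j_{\upvarepsilon}(f) = 1 / \bigl( \tfrac{1}{T^{q}-T} - J_{\upvarepsilon}(f) \bigr)$, the desired conclusion $j_{\upvarepsilon}(f) = \infty$ reduces to the identity
\[
(T^{q^{2}} - T)\, \upzeta_{A}(q^{2}-1) = (T^{q} - T)^{q}\, \upzeta_{A}(q-1)^{q+1}.
\]

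This identity is the main obstacle. It is exactly the statement that the ``rank $2$ discriminant formula'' $\Updelta(\Uplambda) = -(T^{q^{2}} - T)E_{q^{2}-1}(\Uplambda) + (T^{q} - T)^{q}E_{q-1}(\Uplambda)^{q+1}$ vanishes on the rank $1$ $A$-lattice $A$, which is expected: the Drinfeld module attached to a rank $1$ lattice is a Carlitz-type module $\phi_{T}(z) = Tz + c\, z^{q}$, whose coefficient of $z^{q^{2}}$ is identically zero. I would derive the identity by comparing the coefficient of $z^{q^{2}}$ in the functional equation $e_{A}(Tz) = T\, e_{A}(z) + c\, e_{A}(z)^{q}$ satisfied by the exponential of $A$, or cite it directly from the theory of the Carlitz module in \cite{Goss}, \cite{Thak}.
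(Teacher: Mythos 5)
Your proof is correct. Note that the paper itself offers no argument for this theorem beyond the citation ``Corollary 1 of \cite{DGI}'', so what you have written is a self-contained replacement rather than a parallel of anything in the text. Your cycle of implications is sound: the middle step is indeed immediate from the definition of $j^{\rm qt}$ as a set of limit points, and the step $\infty\in j^{\rm qt}(f)\Rightarrow f\in k$ correctly leans on the preceding theorem $|j_{\upvarepsilon}(f)|=q^{2q-1}$ for $f\in k_{\infty}-k$ (itself only cited, but available). For the forward implication, your computation $\Uplambda_{\upvarepsilon}(f)=QA$ for $\upvarepsilon\le 1/|Q|$ is right (if $r\ne 0$ then $|r|\ge 1$, so $\|\uplambda f\|\ge 1/|Q|$), the homogeneity cancellation of $Q^{\,1-q^{2}}$ in $J_{\upvarepsilon}$ is right, and you correctly isolate the one nontrivial input: the identity $(T^{q^{2}}-T)\,\upzeta_{A}(q^{2}-1)=(T^{q}-T)^{q}\,\upzeta_{A}(q-1)^{q+1}$, which is exactly the vanishing of Gekeler's $\Updelta$ on a rank-one lattice. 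Your justification of that identity is the standard one and is legitimate: writing $e_{A}(z)=\sum\upalpha_{i}z^{q^{i}}$, the functional equation gives $[T]_{2}=(T^{q^{2}}-T)\upalpha_{2}-(T^{q}-T)\upalpha_{1}^{q+1}$ with $\upalpha_{1}=E_{q-1}$, $\upalpha_{2}=E_{q^{2}-1}+E_{q-1}^{q+1}$, so $[T]_{2}=\Updelta(A)$, and $[T]_{2}=0$ because the module attached to a rank-one lattice has no $z^{q^{2}}$ term; you would also want to record that $\upzeta_{A}(q-1)\ne 0$ (its leading term is $1$) so that $g_{\upvarepsilon}(f)\ne 0$ and the quotient genuinely is $\infty$. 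One cosmetic slip: you wrote the discriminant with a minus sign in front of $(T^{q^{2}}-T)E_{q^{2}-1}$, mixing the paper's $E$-convention (plus sign, sums over all nonzero elements) with its $\upzeta$-convention (minus sign, monic sums); since $E_{n(q-1)}=-\upzeta(n(q-1))$ and $(-1)^{q+1}=1$ in every characteristic, this does not affect the identity you prove.
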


\begin{proof}  Corollary 1 of \cite{DGI}.
\end{proof}

Since $k$ is dense in $k_{\infty}$, Theorem \ref{kcharact} implies that $j^{\rm qt}$ is not continuous.

\begin{theo} If $f\in k_{\infty}-k$ is quadratic, $\# j^{\rm qt}(f)<\infty$.
\end{theo}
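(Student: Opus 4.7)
The plan is to leverage Lemma~\ref{explicitLambda} and Proposition~\ref{renormalizedlimit} to reduce the limit set $j^{\rm qt}(f)$ to a finite collection indexed by the ideals $\mathfrak{a}_{0},\dots,\mathfrak{a}_{d-1}$ of Proposition~\ref{renormalizedlimit}. First, Lemma~\ref{explicitLambda} shows that $\Uplambda_{\upvarepsilon}(f)$, and hence $j_{\upvarepsilon}(f)$, is constant on intervals determined by the discrete thresholds $\upvarepsilon_{N,l}:=q^{-dN-l}$ with $N\in\N$ and $l\in\{0,\dots,d-1\}$, so $j_{\upvarepsilon}(f)$ takes values in the countable set $\{j_{\upvarepsilon_{N,l}}(f)\}_{N,l}$. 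Since only $d$ residues $l$ occur, a pigeonhole argument shows that every subsequential limit of $j_{\upvarepsilon_{i}}(f)$ with $\upvarepsilon_{i}\to 0$ is, after passing to a further subsequence, of the form $\lim_{k}j_{\upvarepsilon_{N_{k},l}}(f)$ with $N_{k}\to\infty$ and $l$ fixed. It therefore suffices to prove that for each fixed $l$ the sequence $(j_{\upvarepsilon_{N,l}}(f))_{N\geq 0}$ converges in $k_{\infty}\cup\{\infty\}$, whence $\#\,j^{\rm qt}(f)\leq d$.

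Next I would exploit homogeneity to transfer the computation to the renormalized family $\hat{\Uplambda}_{\upvarepsilon_{N,l}}(f)=\sqrt{D}\,f^{-N}\,\Uplambda_{\upvarepsilon_{N,l}}(f)$. The zeta-like sums transform under a scaling $\Uplambda\mapsto c\Uplambda$ (with $c\in k_{\infty}^{\times}$) by $\upzeta_{c\Uplambda}(m)=c^{-m}\upzeta_{\Uplambda}(m)$, and $J_{\upvarepsilon}$ in (\ref{Jep}) is a quotient whose numerator and denominator both carry total homogeneity $q^{2}-1$; the scaling factor cancels and $j_{\upvarepsilon_{N,l}}(f)$ may equally be computed from $\hat{\Uplambda}_{\upvarepsilon_{N,l}}(f)$. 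By Proposition~\ref{renormalizedlimit}, $\hat{\Uplambda}_{\upvarepsilon_{N,l}}(f)\to\mathfrak{a}_{d-1-l}$ in the Hausdorff metric on $k_{\infty}$, with $\mathfrak{a}_{d-1-l}$ a discrete subset of $k_{\infty}$ (being a non-zero ideal of the sub-Dedekind domain $A_{\infty_{1}}\subset k_{\infty}$).

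Third, Binet's formula provides an explicit $\F_{q}$-linear bijection $\Phi_{N}\colon\mathfrak{a}_{d-1-l}\to\hat{\Uplambda}_{\upvarepsilon_{N,l}}(f)$ matching the basis element $f^{i+1}T^{j}$ of $\mathfrak{a}_{d-1-l}$ with $\sqrt{D}\,f^{-N}\,T^{j}{\tt Q}_{N+i}\in\hat{\Uplambda}_{\upvarepsilon_{N,l}}(f)$, carrying the pointwise error
\[
|\Phi_{N}(f^{i+1}T^{j})-f^{i+1}T^{j}|=q^{j-(2N+i+1)d},
\]
which decays exponentially in $N$ uniformly in $(i,j)$ at rate $|f^{\ast}/f|^{N}=q^{-2Nd}$. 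Combining this with the non-archimedean triangle inequality and the convergence of the Eisenstein-type sums over the discrete set $\mathfrak{a}_{d-1-l}$, one upgrades the Hausdorff convergence to the zeta convergence
\[
\lim_{N\to\infty}\sum_{\hat\lambda\in\hat{\Uplambda}_{\upvarepsilon_{N,l}}(f)-\{0\}}\hat\lambda^{-m}\;=\;E_{m}(\mathfrak{a}_{d-1-l})\;:=\;\sum_{\alpha\in\mathfrak{a}_{d-1-l}-\{0\}}\alpha^{-m}
\]
for $m=q-1$ and $m=q^{2}-1$. Substituting into the formula for $J_{\upvarepsilon}$ yields $\lim_{N\to\infty}j_{\upvarepsilon_{N,l}}(f)=j(\mathfrak{a}_{d-1-l})$, where $j(\mathfrak{a}_{d-1-l})$ is the value of the $j$-formula applied to the limiting sums; Theorem~\ref{kcharact} ensures the limit is finite since $f\notin k$. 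Hence $j^{\rm qt}(f)\subseteq\{j(\mathfrak{a}_{0}),\dots,j(\mathfrak{a}_{d-1})\}$, a set of cardinality at most $d<\infty$. The main technical obstacle lies in this upgrade from Hausdorff set-convergence to convergence of the zeta sums: each of infinitely many summands converges at its own rate in $N$, so the tails require uniform control. The non-archimedean triangle inequality, together with the decay $|f^{\ast}/f|^{N}=q^{-2Nd}$, allows one to split the sum into a finite ``main'' part where Binet gives direct control and an exponentially small ``tail'', yielding the required convergence.
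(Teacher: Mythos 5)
Your argument is sound for $f$ a quadratic \emph{unit}, and in that case it is essentially the paper's own proof of the sharper Theorem \ref{jfidthem}: restrict to the thresholds $\upvarepsilon_{N,l}=q^{-dN-l}$, use the common homogeneity of degree $1-q^{2}$ in numerator and denominator of $J_{\upvarepsilon}$ to replace $\Uplambda_{\upvarepsilon_{N,l}}(f)$ by the rescaled $\sqrt{D}f^{-N}\Uplambda_{\upvarepsilon_{N,l}}(f)$, and pass to the limit ideal $\mathfrak{a}_{d-1-l}$. The technical step you single out --- upgrading Hausdorff convergence of the lattices to convergence of the zeta sums --- is in fact painless in the ultrametric setting: writing $\hat\uplambda^{-m}-\upalpha^{-m}=(\upalpha^{m}-\hat\uplambda^{m})/(\hat\uplambda^{m}\upalpha^{m})$ and using the uniform bound $|\hat\uplambda-\upalpha|\leq q^{-2Nd-1}$ from Binet's formula together with $|\upalpha|\geq 1$, the strong triangle inequality controls the difference of the two sums term by term, so no main/tail splitting is required.

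The genuine gap is one of scope. The theorem is asserted for \emph{every} quadratic $f\in k_{\infty}-k$, but Lemma \ref{explicitLambda} and Proposition \ref{renormalizedlimit} are established only under the standing hypothesis of Section 1 that $f$ is a quadratic unit, i.e.\ $f^{2}-af-b=0$ with $b\in\F_{q}^{\ast}$: only then do the best approximations obey the single recursion ${\tt Q}_{n+1}=a{\tt Q}_{n}+b{\tt Q}_{n-1}$, so that the errors $\|T^{l}{\tt Q}_{n}f\|$ sweep out exactly $q^{-\N}$ and rescaling by $f^{-N}$ reproduces a fixed finite list of limit modules. A general quadratic is not ${\rm GL}_{2}A$-equivalent to a quadratic unit --- its continued fraction is only \emph{eventually} periodic, with period possibly greater than one --- so your thresholds $\upvarepsilon_{N,l}$ and the renormalizing factor $f^{-N}$ are simply not available. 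This is precisely why the paper proves the general statement by citation to Theorem 2 of \cite{DGII}, where the eventual periodicity of the expansion (equivalently, the self-similarity of $\Uplambda_{\upvarepsilon}(f)$ under multiplication by a fundamental unit of the order attached to $f$) plays the role that the period-one structure plays in your argument. To complete your proof you would need either that reduction or a direct argument that, for an arbitrary quadratic, the suitably rescaled spaces $\Uplambda_{\upvarepsilon}(f)$ accumulate on only finitely many limit sets.
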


\begin{proof}  Theorem 2 of \cite{DGII}.
\end{proof}

We now return to the case of $f$ a quadratic unit.
We will say that an element of $A_{\infty_{1}}$ is {\it monic} if it may be written in the form $c_{0}+c_{1}(T)f+\cdots +c_{k}(T)f^{k}$ where $c_{0}\in\F_{q}$,
$\deg c_{i}(T)\leq d-1$ for $1\leq i\leq k$ and $c_{k}(T)$ is a monic polynomial in $T$.
Then for any ideal $\mathfrak{a}\subset A_{\infty_{1}}$ we recall that the zeta function of $\mathfrak{a}$  is \cite{Goss} 
\[  \upzeta^{\mathfrak{a}}(n(q-1)) := \sum_{0\not= x\in \mathfrak{a}\text{ monic}} x^{n(1-q)}. \]
Define the $j$-invariant of the ideal $\mathfrak{a}$ as
\[  j(\mathfrak{a}) :=\frac{1}{\frac{1}{T^{q}-T}  -J(\mathfrak{a})},\quad 
J(\mathfrak{a}) :=  \frac{T^{q^{2}}-T}{(T^{q}-T)^{q+1}} \cdot
\frac{\upzeta^{\mathfrak{a}}(q^{2}-1)}{\upzeta^{\mathfrak{a}}(q-1)^{q+1}}.
\]
The $j$-invariant can also be defined using sums over all non-zero elements of $\mathfrak{a}$, where the $-$ sign in the denominator
in front of $J(\mathfrak{a})$ becomes a $+$ sign.  Then it is clear that $j(\mathfrak{a})$ only depends on the ideal class $[\mathfrak{a}]\in{\sf Cl}_{A_{\infty_{1}}}$
and $j$ defines a function
\[ j: {\sf Cl}_{A_{\infty_{1}}}\longrightarrow k_{\infty}.\]

Consider the ideals introduced in the previous section
\[ \mathfrak{a}_{i} = (f, fT,\dots, fT^{i})\subset A_{\infty_{1}},\quad i=0,\dots, d-1. \]
Note that $\mathfrak{a}_{i}$ is non principal for $i\not=0$. 

\begin{lemm} For $i=1,\dots ,d$, $\mathfrak{a}_{d-1}^{i}=\mathfrak{a}_{d-i}$.  In particular, the set of classes $\{ [ \mathfrak{a}_{i}]\}$
forms a cyclic subgroup of order $d$ of ${\sf Cl}_{A_{\infty_{1}}}$ generated by $[\mathfrak{a}_{d-1}]$.
\end{lemm}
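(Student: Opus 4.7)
The plan is to convert the problem into divisor/valuation data on the smooth model $\Upsigma$ and exploit that $A_{\infty_{1}}$ is a Dedekind domain whose non-zero ideals correspond bijectively to effective divisors on $\Upsigma$ supported away from $\infty_{1}$. First I would record the divisors of the two relevant functions. Since $f$ is a unit of $\mathcal{O}_{K}$ with $v_{\infty_{1}}(f) = -d$ and $f f^{\ast} = -b \in \F_{q}^{\ast}$, the identity $v_{\infty_{1}}(ff^{\ast}) = 0$ forces $v_{\infty_{1}}(f^{\ast}) = d$, and then the Galois swap $\infty_{1} \leftrightarrow \infty_{2}$, $f \leftrightarrow f^{\ast}$ (valid because $\uppi$ is unramified over $\infty$) yields $v_{\infty_{2}}(f) = d$; at every other closed point $P$ of $\Upsigma$, $v_{P}(f) = 0$. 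Similarly $v_{\infty_{1}}(T) = v_{\infty_{2}}(T) = -1$ by unramifiedness, and $v_{P}(T) \geq 0$ elsewhere.

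Next I would compute the prime factorization of $\mathfrak{a}_{i} = (f, fT, \ldots, fT^{i})$ one prime at a time: at each maximal ideal $\mathfrak{p}$ of $A_{\infty_{1}}$,
\[ v_{\mathfrak{p}}(\mathfrak{a}_{i}) = \min_{0 \leq j \leq i} v_{\mathfrak{p}}(fT^{j}). \]
For any $\mathfrak{p} \neq \mathfrak{p}_{\infty_{2}}$ (the prime corresponding to $\infty_{2}$), the $j = 0$ summand already gives $v_{\mathfrak{p}}(f) = 0$, so $v_{\mathfrak{p}}(\mathfrak{a}_{i}) = 0$. At $\mathfrak{p}_{\infty_{2}}$, $v_{\infty_{2}}(fT^{j}) = d - j$ is strictly decreasing in $j$, so $v_{\infty_{2}}(\mathfrak{a}_{i}) = d - i$. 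Hence $\mathfrak{a}_{i} = \mathfrak{p}_{\infty_{2}}^{d-i}$ for $i = 0, \ldots, d-1$, and the identity $\mathfrak{a}_{d-1}^{i} = \mathfrak{p}_{\infty_{2}}^{i} = \mathfrak{a}_{d-i}$ is immediate for $i = 1, \ldots, d$.

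Finally, to see $[\mathfrak{a}_{d-1}] = [\mathfrak{p}_{\infty_{2}}]$ has order exactly $d$ in ${\sf Cl}_{A_{\infty_{1}}}$, I would observe that $\mathfrak{p}_{\infty_{2}}^{i}$ is principal iff there is $g \in A_{\infty_{1}}$ with divisor $i\infty_{2} - i\infty_{1}$ on $\Upsigma$, i.e., iff $i$ is a non-gap at $\infty_{1}$. Since $\infty_{1}$ is not a Weierstra\ss\ point of the hyperelliptic curve $\Upsigma$ of genus $d-1$, the Weierstra\ss\ gap sequence at $\infty_{1}$ is precisely $\{1, 2, \ldots, d-1\}$, so $\mathfrak{p}_{\infty_{2}}^{i}$ is non-principal for $1 \leq i \leq d-1$; the top case $i = d$ is witnessed by $f$ itself, giving $\mathfrak{p}_{\infty_{2}}^{d} = (f) = \mathfrak{a}_{0}$, the identity of the class group.

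I expect no serious obstacle: the argument is entirely divisor-theoretic once the valuation-to-ideal dictionary is in place. The one point worth flagging is that trying to prove, say, $\mathfrak{a}_{d-1}^{2} = \mathfrak{a}_{d-2}$ by manipulating generators via $f^{2} = af + b$ is a trap, since the constant $b$ and the element $fT^{d}$ intrude in a way that obscures what is really a purely local computation at the single prime $\mathfrak{p}_{\infty_{2}}$ (indeed $v_{\infty_{2}}(fT^{d}) = 0$ shows $fT^{d} \notin \mathfrak{a}_{d-1}^{2}$, explaining why the naive generator-chase fails). Valuation bookkeeping sidesteps this entirely.
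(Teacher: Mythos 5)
Your proof is correct, but it takes a genuinely different route from the paper's. The paper argues by direct manipulation of generators using the defining relation $f^{2}=af+b$: it shows (in the model case $f^{2}=fT^{d}+1$) that the generators $f^{2}T^{d},\dots ,f^{2}T^{2d-2}$ of $\mathfrak{a}_{d-1}^{2}$ can be rewritten as $(f^{3}-f)T^{j}$, whence $fT^{j}\in\mathfrak{a}_{d-1}^{2}$ for $j\leq d-2$, giving $\mathfrak{a}_{d-1}^{2}=\mathfrak{a}_{d-2}$ and then the general case by induction. So your warning that the generator-chase is a ``trap'' is not borne out: the paper never needs $fT^{d}\in\mathfrak{a}_{d-1}^{2}$, only $fT^{j}$ for $j\leq d-2$, and its computation goes through. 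Your localization at $\mathfrak{p}_{\infty_{2}}$ instead identifies $\mathfrak{a}_{i}=\mathfrak{p}_{\infty_{2}}^{d-i}$ outright, which yields the multiplicative identity with no computation and is the cleaner argument; it also matches the paper's later Proposition on $\mathrm{Ker}(\Upphi)$, where $\mathfrak{a}_{d-1}=\mathfrak{m}_{\infty_{2}}$ is established separately. More importantly, your Weierstra\ss-gap argument actually proves something the paper's own proof does not: that the order of $[\mathfrak{a}_{d-1}]$ in ${\sf Cl}_{A_{\infty_{1}}}$ is \emph{exactly} $d$. The paper's proof only shows $[\mathfrak{a}_{d-1}]^{d}=[(f)]=1$, and the needed non-principality of $\mathfrak{a}_{i}$ for $i\neq 0$ is merely asserted in a preceding Note. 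One small imprecision on your side: ``$\mathfrak{p}_{\infty_{2}}^{i}$ is principal iff $i$ is a non-gap at $\infty_{1}$'' is too strong as an ``iff'' (a non-gap only supplies a function with pole order $i$ at $\infty_{1}$, not one whose zeros all sit at $\infty_{2}$), but you only use the correct direction (gap $\Rightarrow$ non-principal) for $1\leq i\leq d-1$ and exhibit $f$ explicitly for $i=d$, so the argument stands.
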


\begin{proof}  In order to ease notation we will prove the statement in the case where $f^{2}=fT^{d}+1$; the proof in the general case is identical in form.  First we note
that  $\mathfrak{a}_{d-1}^{2}$ has generating set
\[  \{ f^{2} T^{j}\}_{j=0}^{2d-2} .\]
The generators $f^{2}T^{d},\dots , f^{2}T^{2d-2}$ may be rewritten 
\[   f^{2}T^{d}= f^{3}-f,\dots , f^{2}T^{2d-2}=(f^{3}-f)T^{d-2}  .\]
Since $f^{3},\dots ,f^{3}T^{d-2}\in \mathfrak{a}_{d-1}^{2}$, it follows that $f,\dots ,fT^{d-2}\in \mathfrak{a}_{d-1}^{2}$, hence $\mathfrak{a}_{d-2}\subset \mathfrak{a}_{d-1}^{2}$. The remaining generators $f^{2},\dots ,f^{2}T^{d-1}$
are clearly in $\mathfrak{a}_{d-2}$, so it follows that $\mathfrak{a}_{d-2}= \mathfrak{a}_{d-1}^{2}$.  Inductively, by a similar argument, we have 
\[ \mathfrak{a}_{d-1}^{i+1} = \mathfrak{a}_{d-i}\mathfrak{a}_{d-1}=\mathfrak{a}_{d-i-1}. \]
\end{proof}

\begin{theo}\label{jfidthem} Let $f$, $\mathfrak{a}_{i}$ be as above.  Then
\[ j^{\rm qt}(f) = \left\{ j (\mathfrak{a}_{i} ) \right\}_{i=0}^{d-1}.\]
\end{theo}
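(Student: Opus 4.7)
The strategy is to match, for each residue $l \in \{0, \ldots, d-1\}$, the subsequence $\{j_{\upvarepsilon_{N,l}}(f)\}_{N \geq 0}$ with $\upvarepsilon_{N,l} = q^{-Nd-l}$ to the ideal $\mathfrak{a}_{d-1-l}$ using the renormalization $\hat{\Uplambda}_{\upvarepsilon_{N,l}}(f) = f^{-N}\sqrt{D}\,\Uplambda_{\upvarepsilon_{N,l}}(f)$ of Proposition \ref{renormalizedlimit}, and then deploy a pigeonhole argument on $l$ to show that these $d$ subsequences exhaust all limit points of $j_{\upvarepsilon}(f)$ as $\upvarepsilon \to 0$.

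First I would observe that $J_{\upvarepsilon}(f)$ and $J(\mathfrak{a})$ are invariant under rescaling the summed set: since $\zeta^{c\Uplambda}(m) = c^{-m}\zeta^{\Uplambda}(m)$, and in the defining ratio both $\zeta(q^{2}-1)$ and $\zeta(q-1)^{q+1}$ scale by the common factor $c^{-(q^{2}-1)}$, these factors cancel. Hence I may compute $J_{\upvarepsilon_{N,l}}(f)$ using $\hat{\Uplambda}_{\upvarepsilon_{N,l}}(f)$ in place of $\Uplambda_{\upvarepsilon_{N,l}}(f)$. By Binet's formula, each basis vector
\[ \hat{\upxi}^{(N)}_{i,j} := f^{-N}\sqrt{D}\,T^{j}{\tt Q}_{N+i} \]
of $\hat{\Uplambda}_{\upvarepsilon_{N,l}}(f)$ equals $f^{i+1}T^{j}$ plus an error of absolute value $q^{-(2N+i+1)d+j}$, so each $\upmu = \sum c_{i,j}\hat{\upxi}^{(N)}_{i,j}$ is matched with the element $x = \sum c_{i,j}f^{i+1}T^{j} \in \mathfrak{a}_{d-1-l}$, with $|\upmu - x|$ tending to $0$ uniformly on bounded index ranges as $N \to \infty$. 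The non-archimedean estimate $|\upmu^{-m} - x^{-m}| \leq |\upmu - x|\cdot|x|^{-m-1}$ (valid once $|\upmu - x| < |x|$), combined with absolute convergence of $\sum_{x \in \mathfrak{a}_{d-1-l}\setminus\{0\}} |x|^{-m}$ for $m = q-1$ and $m = q^{2}-1$ (bounded by a Goss-type zeta value), justifies a dominated-convergence-style interchange of $\lim_{N\to\infty}$ with the zeta sum. This yields $\zeta^{\hat{\Uplambda}_{\upvarepsilon_{N,l}}(f)}(m) \to \zeta^{\mathfrak{a}_{d-1-l}}(m)$, hence $J_{\upvarepsilon_{N,l}}(f) \to J(\mathfrak{a}_{d-1-l})$ and $j_{\upvarepsilon_{N,l}}(f) \to j(\mathfrak{a}_{d-1-l})$.

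To conclude, Lemma \ref{explicitLambda} shows that $\Uplambda_{\upvarepsilon}(f)$ takes only the discrete family of values $\{\Uplambda_{\upvarepsilon_{N,l}}(f)\}_{N,l}$, so any sequence $\upvarepsilon_{i} \to 0$ along which $j_{\upvarepsilon_{i}}(f)$ converges has $j_{\upvarepsilon_{i}}(f) = j_{\upvarepsilon_{N_{i},l_{i}}}(f)$ with $N_{i} \to \infty$; a pigeonhole argument on $l_{i} \in \{0,\ldots,d-1\}$ together with the previous step identifies the limit with one of the values $j(\mathfrak{a}_{d-1-l})$. Since every $j(\mathfrak{a}_{i})$ is realized in this way, $j^{\rm qt}(f) = \{j(\mathfrak{a}_{i})\}_{i=0}^{d-1}$. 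The main technical hurdle will be the dominated-convergence interchange in the second paragraph, which also depends on verifying that the spanning set $\{f^{i+1}T^{j}\}$ (with the prescribed index restrictions that mirror those on $\hat{\upxi}^{(N)}_{i,j}$) is genuinely an $\F_{q}$-basis of $\mathfrak{a}_{d-1-l}$, so that the element-matching between $\hat{\Uplambda}_{\upvarepsilon_{N,l}}(f)$ and $\mathfrak{a}_{d-1-l}$ is a bijection rather than merely a surjection.
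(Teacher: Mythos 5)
Your proposal follows essentially the same approach as the paper: parametrize $\upvarepsilon$ discretely as $q^{-Nd-l}$, use homogeneity of the ratio defining $J_{\upvarepsilon}(f)$ to rescale $\Uplambda_{\upvarepsilon}(f)$ by $\sqrt{D}f^{-N}$, identify the renormalized limit with $\mathfrak{a}_{d-1-l}$ via Binet's formula (Proposition~\ref{renormalizedlimit}), and pass to the limit in the zeta ratio to obtain $\tilde{J}(\mathfrak{a}_{d-1-l})$. The paper delegates the limit-and-sum interchange to Proposition~\ref{renormalizedlimit} and the pigeonhole step is left implicit, whereas you spell both out (including the non-archimedean dominated-convergence estimate and the basis-vs-spanning-set check), but the route and key ideas are identical.
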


\begin{proof}   Let $\upvarepsilon=q^{-Nd-l}$ where $0\leq l\leq d-1$.  Every possible value $<1$ of the absolute value is of this form.  Abbreviate 
$ \Uplambda_{N,l}:=\Uplambda_{q^{-Nd-l}}(f )$ and write
\begin{align}\label{JTilde} \tilde{J}_{\upvarepsilon}(f)=
\frac{\upzeta_{f,\e}(q^{2}-1)}{\upzeta_{f,\e}(q-1)^{q+1}}=
\frac{ \sum_{0\not=a\in \Uplambda_{N,l}\text{ monic}}a^{1-q^{2}}   }{\left(\sum_{0\not=a\in \Uplambda_{N,l}\text{ monic}}a^{1-q}  \right)^{1+q}} .\end{align}
By Lemma \ref{explicitLambda}, we have 
\[ \Uplambda_{N,l} ={\rm span}_{\F_{q}} (T^{d-1-l}{\tt Q}_{N},\dots , {\tt Q}_{N}, T^{d-1}{\tt Q}_{N+1},\dots ). \]
Since the numerator and denominator of (\ref{JTilde})
have homogeneous degree $1-q^{2}$,  we may replace $\Uplambda_{\upvarepsilon}(f)$ by $\tilde{\Uplambda}_{\upvarepsilon}(f)=\sqrt{D}f^{-N}\Uplambda_{\upvarepsilon}(f)$
(e.g. by dividing the numerator and denominator by $(D^{-1/2}f)^{(1-q^{2})N}$).   By Proposition \ref{renormalizedlimit}, the limit $N\rightarrow\infty$ in the above rewriting of (\ref{JTilde})
yields the value
\[
\frac{ \sum_{0\not=x\in \mathfrak{a}_{d-1-l}\text{ monic}}a^{1-q^{2}}   }{\left(\sum_{0\not=x\in  \mathfrak{a}_{d-1-l}\text{ monic}}a^{1-q}  \right)^{1+q}} = \tilde{J}
(\mathfrak{a}_{d-1-l}), \]
where \[ \tilde{J}
(\mathfrak{a}_{d-1-l}):= \upzeta^{\mathfrak{a}_{d-1-l}}(q^{2}-1)/\upzeta^{\mathfrak{a}_{d-1-l}}(q-1)^{q+1}.\]
\end{proof}


\section{Injectivity}\label{Injectivity}

Fix $\mathfrak{a}\subset A_{\infty_{1}}$ a non-principal ideal. We begin by picking a convenient representative of $[\mathfrak{a}] \in{\sf Cl}_{A_{\infty_{1}}}$ as well as specifying for the representative
an $\F_{q}$-vector space basis.  Write $\mathfrak{a}=(g,h)$ and suppose that $\deg (g)\leq \deg (h)$.  Without loss of generality, we may assume
 $g$ has the smallest degree of all monic non-zero
elements\footnote{Since $A_{\infty_{1}}$ is Dedekind, for any non-zero $x\in\mathfrak{a}$, there exists $y\in\mathfrak{a}$ with $\mathfrak{a}=(x,y)$.
See for example Theorem 17 of \cite{Mar}, page 61.} of $\mathfrak{a}$.  Since $\mathfrak{a}$ is not principal, $h/g$ does not belong to $A_{\infty_{1}}$.
 Consider an $\F_{q}$-vector space basis of $\mathfrak{a}$ \begin{align}\label{vecbasis}\{ a_{0}=g,a_{1},\dots \},\end{align} 
 where say $a_{i_{0}}=h$ for some $i_{0}$.  We may assume that the elements
 \begin{align*} fg, fTg, fT^{2}g,\dots , fT^{d-1}g,f^{2}g,f^{2}Tg,\dots \end{align*} are elements of this basis; note however that they do not form a complete list of basis elements, since $h$ is not among them.    We will also assume that 
the map $a_{i}\mapsto \deg (a_{i})$ is injective and that $\deg (a_{i})<\deg (a_{j})$ for $i<j$.  And finally, we may also assume that
the $a_{i}$ are monic as polynomials in $f$ (in the sense described in the previous section).
Consider the fractional ideal in $[\mathfrak{a}]$
\begin{align}\label{astarlist} \mathfrak{a}^{\star}:=g^{-1}\mathfrak{a} = \langle \upalpha_{0}=1,\upalpha_{1}, \upalpha_{2},\dots ,\upalpha_{n},f, fT,\dots  \rangle_{\F_{q}},\quad \upalpha_{i}:=a_{i}/g.\end{align}
  Since $\mathfrak{a}$ is not principal, we have 
$ \mathfrak{a}^{\star}\supsetneq (1)$.  

\begin{lemm}\label{basislemma} The second generator $h$ of $\mathfrak{a}=(g,h)$ may be chosen so that the
$\upalpha_{1},\dots ,\upalpha_{n}\in K-A_{\infty_{1}}$ appearing in (\ref{astarlist}) satisfy \[ \quad 1<|\upalpha_{1}|<\cdots <|\upalpha_{n}|  <|f|.\]
\end{lemm}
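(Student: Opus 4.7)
The plan is to use the Weierstra\ss\ structure at $\infty_1$ and the minimality of $\deg g$ to push representatives of the finite quotient $\mathfrak{a}^\star/A_{\infty_1}$ into the absolute-value window $(1,|f|)$. First I would note that $g\in\mathfrak{a}$ forces $A_{\infty_1}\subseteq g^{-1}\mathfrak{a}=\mathfrak{a}^\star$, and that since both are rank-$2$ $A$-modules inside $K$, the quotient $\mathfrak{a}^\star/A_{\infty_1}$ is a finite-dimensional $\F_q$-vector space. So the standard basis $\{1,f,fT,\dots\}$ of $A_{\infty_1}$ extends to an $\F_q$-basis of $\mathfrak{a}^\star$ by adjoining finitely many representatives $\upalpha_1,\dots,\upalpha_n$ of a basis of the quotient; the content of the lemma is a size constraint on these.

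I would next pin down the set of absolute values realized by nonzero elements of $A_{\infty_1}$. Since $\Upsigma$ is hyperelliptic of genus $d-1$ and $\infty_1$ is \emph{not} a Weierstra\ss\ point, the Weierstra\ss\ semigroup at $\infty_1$ is $\{0\}\cup\{m\in\N:m\geq d\}$, so nonzero elements of $A_{\infty_1}$ have absolute values in $\{1\}\cup\{q^m:m\geq d\}$, with a total gap in $\{q,q^2,\dots,q^{d-1}\}$. Meanwhile, minimality of $\deg g$ among monic nonzero elements of $\mathfrak{a}$ forces $|\upeta|\geq|g|$ for every nonzero $\upeta\in\mathfrak{a}$, hence $|\xi|\geq 1$ for every nonzero $\xi\in\mathfrak{a}^\star$.

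The heart of the argument is the reduction step. Given $\xi\in\mathfrak{a}^\star\setminus A_{\infty_1}$, if $|\xi|\geq|f|$ then $|\xi|=q^m$ for some $m\geq d$, which lies in the Weierstra\ss\ semigroup, so I can pick $\upeta\in A_{\infty_1}$ with $|\upeta|=|\xi|$ and matching leading Laurent coefficient at $\infty_1$, giving $|\xi-\upeta|<|\xi|$; iterating drives $|\xi|$ into $\{0,1,q,\dots,q^{d-1}\}$ modulo $A_{\infty_1}$. If the reduced representative had $|\xi|=1$, subtracting its constant term (an element of $\F_q\subset A_{\infty_1}$) would produce a nonzero element of $\mathfrak{a}^\star$ of absolute value $<1$, contradicting the previous paragraph -- unless $\xi\in A_{\infty_1}$, against assumption. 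So every nontrivial coset of $\mathfrak{a}^\star/A_{\infty_1}$ admits a representative with $|\upalpha|\in\{q,\dots,q^{d-1}\}$. To upgrade this to a strictly increasing chain of absolute values I would run Gaussian elimination with respect to the leading-coefficient functional at $\infty_1$: any two representatives sharing an absolute value can be combined via an $\F_q^*$-scalar to cancel their leading coefficients and strictly lower one of them, after which I re-reduce inside the window; this terminates because only $d-1$ absolute values are available and hence $n\leq d-1$. Finally, setting $h:=g\,\upalpha_{j_0}$ for any chosen $\upalpha_{j_0}$ of maximal absolute value produces a valid second generator of $\mathfrak{a}$ compatible with the displayed basis \eqref{astarlist}. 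The main obstacle is the leading-coefficient matching step, which hinges on the total absence of Weierstra\ss\ gaps above $d-1$; everything else is bookkeeping on the leading-coefficient functional at $\infty_1$.
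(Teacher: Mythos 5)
Your core reduction is the same as the paper's: both arguments shrink $h$ modulo $gA_{\infty_1}$ using the fact that every integer $\geq d$ is a pole order at $\infty_1$ (you make this explicit via the Weierstra\ss\ semigroup of the non-Weierstra\ss\ point $\infty_1$; the paper phrases it as the existence of $a\in A_{\infty_1}$ with $\deg(ag+h)<\deg(h)$ whenever $\deg(h/g)\geq d$, and the impossibility of continuing once $\deg(h/g)<d$), and both get the lower bound $|\upalpha_i|>1$ from the minimality of $\deg g$ among monic nonzero elements of $\mathfrak{a}$ together with non-principality. Your discussion of why the remaining $\upalpha_i$ automatically fall in the window $(1,|f|)$ with pairwise distinct absolute values is detail the paper absorbs into the construction of the basis (\ref{vecbasis}).

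The one step that does not hold as written is the last one: ``setting $h:=g\,\upalpha_{j_0}$ for any chosen $\upalpha_{j_0}$ of maximal absolute value produces a valid second generator of $\mathfrak{a}$.'' An arbitrary element of $\mathfrak{a}\setminus gA_{\infty_1}$ need not generate $\mathfrak{a}$ together with $g$: the $A_{\infty_1}$-module $\mathfrak{a}/gA_{\infty_1}$ is cyclic, but not every nonzero element of a cyclic module is a generator, so $(g,g\upalpha_{j_0})$ may be a proper subideal of $\mathfrak{a}$; nor does the lemma require $h/g$ to have maximal absolute value. The repair is contained in your own reduction step: apply it to $\xi=h/g$ itself. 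Each iteration replaces $h$ by $h-\upeta g$ with $\upeta\in A_{\infty_1}$, which manifestly preserves $(g,h)=\mathfrak{a}$, and the terminal representative has absolute value in $\{q,\dots,q^{d-1}\}$ because it cannot lie in $A_{\infty_1}$ (else $\mathfrak{a}$ would be principal). This is exactly the paper's proof. A minor side point: $A_{\infty_1}$ is not an $A$-module (indeed $T\notin A_{\infty_1}$), so the finiteness of $\mathfrak{a}^{\star}/A_{\infty_1}$ should be deduced either from $\mathfrak{a}^{\star}$ being a fractional $A_{\infty_1}$-ideal containing $A_{\infty_1}$, or directly from your observation that degrees in $\mathfrak{a}^{\star}$ are bounded below while $A_{\infty_1}$ realizes every degree outside $\{1,\dots,d-1\}$.
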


\begin{proof}  By construction of the basis (\ref{vecbasis}), we will be done once we show that we may take the generator $h$ so that
 $1<|\upalpha_{i_{0}}|=|h/g|<|f|$.   Suppose otherwise i.e.\ that $\deg (h/g)\geq d $.  Then there exists $a\in A_{\infty_{1}}$ so that
\[   h'=ag +h\]
has degree $<\deg (h)$. Then $\mathfrak{a}=(g, h')$ and $\deg (h'/g)<\deg (h/g)$.  Replace $h$ by $h'$ and proceed inductively:  we reduce to the case $\deg (g)+d\geq \deg (h)$.  If we have equality, 
we take $h'=cfg+h$, where $c\in\F_{q}$ is a suitable constant so that $\deg (h')<\deg (h)$.  We cannot continue any further since
there are no non-constant elements of $A_{\infty_{1}}$ having degree $<d$.   Note that $\deg (h')>\deg (g)$ since $(g,h')=\mathfrak{a}$ is non principal
and $g$ is by assumption the monic element of smallest degree.  
\end{proof}

For the remainder of this section we will work with $\mathfrak{a}^{\star}=g^{-1}\mathfrak{a}$, renaming it $\mathfrak{a}$ and 
fixing the basis satisfying the conditions of Lemma \ref{basislemma}.  That is,
\[\mathfrak{a}=\langle 1,\upalpha_{1},\dots \rangle_{\F_{q}},\]
where $\upalpha_{i}=a_{i}/g$, as specified in the paragraph before Lemma \ref{basislemma}.
An element $x\in \mathfrak{a}$ is called monic if the integral element $gx$ is.

For $n\in\N$, consider the zeta function
\[ \upzeta^{\mathfrak{a}}(n(q-1)) =\sum_{0\not= x\in \mathfrak{a}\text{ monic}} x^{n(1-q)} = 
1+\sum_{i=1}^{\infty} \Upomega_{i}^{\mathfrak{a}}(n(q-1))\]
where 
\begin{align*} \Upomega_{i}^{\mathfrak{a}}(n(q-1)) & =  \sum_{c_{0},\dots ,c_{i-1}\in \F_{q}} \left(c_{0}+c_{1}\upalpha_{1}+\dots + c_{i-1}\upalpha_{i-1} + \upalpha_{i}\right)^{n(1-q)} \\
&:= \sum_{\vec{c}\in\F_{q}^{i}} \left(\vec{c}\cdot\vec{\upalpha}_{i-1} +\upalpha_{i}\right)^{n(1-q)}
  \end{align*}
  and where $\vec{\upalpha}_{i-1}=(\upalpha_{0}=1,\upalpha_{1},\dots ,\upalpha_{i-1})$.

\begin{lemm}\label{omegalemma}  Let $\mathfrak{a}$, $ \Upomega_{i}^{\mathfrak{a}}(q^{n}-1)$ be as above.  
\begin{enumerate}
\item When $i=1$, 
\[   \Upomega_{1}^{\mathfrak{a}}(q^{n}-1)  =\frac{\upalpha_{1}^{q^{n}}-\upalpha_{1}}{\prod_{c\in\F_{q}}(c+\upalpha_{1}^{q^{n}})}\quad\text{and}\quad 
 \left| \Upomega_{1}^{\mathfrak{a}}(q^{n}-1)\right| = |\upalpha_{1}|^{q^{n}(1-q)}.\]
\item  For all $i\geq 1$,
\begin{align*} \left|  \Upomega_{i}^{\mathfrak{a}}(q^{n}-1)\right| &  
\leq |\upalpha_{i}|^{q^{n}(1-q)}.
\end{align*}
\end{enumerate}
\end{lemm}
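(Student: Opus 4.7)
The plan is to establish part (1) by a direct computation in characteristic $p$, then derive part (2) by peeling off the innermost $c_0$-sum so that the $i=1$ calculation can be re-used, finishing with the ultrametric triangle inequality.

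For part (1), starting from
\[ \Upomega_1^{\mathfrak{a}}(q^n - 1) = \sum_{c_0 \in \F_q} (c_0 + \upalpha_1)^{1 - q^n}, \]
I would rewrite each summand as $(c_0 + \upalpha_1)/(c_0 + \upalpha_1^{q^n})$, using Frobenius and the fact that $c_0^{q^n} = c_0$ for $c_0 \in \F_q$. Splitting the numerator as $(c_0 + \upalpha_1^{q^n}) - (\upalpha_1^{q^n} - \upalpha_1)$ and using $\sum_{c_0 \in \F_q} 1 = q \equiv 0$ in characteristic $p$ collapses the sum to
\[ \Upomega_1^{\mathfrak{a}}(q^n - 1) = (\upalpha_1 - \upalpha_1^{q^n}) \sum_{c_0 \in \F_q} \frac{1}{c_0 + \upalpha_1^{q^n}}. \]
The classical identity $\prod_{c \in \F_q}(x + c) = x^q - x$ together with its logarithmic derivative (and the vanishing of $(x^q)'$ in characteristic $p$) gives $\sum_{c \in \F_q} 1/(c+x) = 1/(x - x^q) = -1/\prod_{c \in \F_q}(c + x)$. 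Setting $x = \upalpha_1^{q^n}$ and simplifying produces the closed form in the statement. For the absolute value, $|\upalpha_1| > 1$ forces $|\upalpha_1^{q^n} - \upalpha_1| = |\upalpha_1|^{q^n}$ and $|\prod_{c \in \F_q}(c + \upalpha_1^{q^n})| = |\upalpha_1|^{q^{n+1}}$, yielding $|\upalpha_1|^{q^n(1-q)}$.

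For part (2), I would separate the innermost $c_0$-sum in
\[ \Upomega_i^{\mathfrak{a}}(q^n - 1) = \sum_{(c_1, \dots, c_{i-1}) \in \F_q^{i-1}} \; \sum_{c_0 \in \F_q} (c_0 + \upbeta)^{1 - q^n}, \qquad \upbeta := c_1 \upalpha_1 + \cdots + c_{i-1} \upalpha_{i-1} + \upalpha_i. \]
By Lemma \ref{basislemma}, the $|\upalpha_j|$ are strictly increasing, so the ultrametric property gives $|\upbeta| = |\upalpha_i|$; in particular $|\upbeta| > 1$, which ensures $\upbeta \notin \F_q$ and that $\upbeta^{q^{n+1}} \neq \upbeta^{q^n}$, so the calculation of part (1) applies verbatim with $\upalpha_1$ replaced by $\upbeta$. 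Each inner sum therefore has absolute value exactly $|\upbeta|^{q^n(1-q)} = |\upalpha_i|^{q^n(1-q)}$, and the ultrametric triangle inequality applied to the $q^{i-1}$ outer terms yields the bound.

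The only delicate point is verifying that $|\upbeta| = |\upalpha_i|$ uniformly in $(c_1, \dots, c_{i-1}) \in \F_q^{i-1}$, which follows immediately from the strict inequalities in Lemma \ref{basislemma}. I do not expect a genuine obstacle: the computation is essentially a direct exploitation of the characteristic-$p$ identity $\sum_{c \in \F_q} 1/(c+x) = 1/(x - x^q)$, while the ultrametric triangle inequality supplies the bound in part (2) automatically once part (1) is established.
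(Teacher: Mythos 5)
Your proposal is correct, and part (2) coincides with the paper's argument: both of you regroup the sum over $\vec c\in\F_q^{i}$ as an outer sum over $(c_1,\dots,c_{i-1})$ of inner sums $\sum_{c_0}(c_0+\upbeta)^{1-q^n}$ with $|\upbeta|=|\upalpha_i|$ forced by the strictly increasing degrees, apply part (1) to each inner sum, and finish with the ultrametric inequality. Where you genuinely diverge is in the derivation of the closed form in part (1). The paper writes $\Upomega_1^{\mathfrak a}(q^n-1)$ over the common denominator $\prod_c(c+\upalpha^{q^n})$ and then expands the numerator $\sum_c(c+\upalpha)\prod_{d\neq c}(d+\upalpha^{q^n})$ in powers of $\upalpha^{q^n}$, with coefficients expressed through the elementary symmetric functions $s_j(c)$ of $\F_q-\{c\}$; it then kills all but the terms $\upalpha^{q^n}-\upalpha$ by repeated use of the vanishing of the power sums $\sum_{c\in\F_q}c^m$ for $0\le m<q-1$, which requires a small amount of casework on $j$. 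You instead split each summand as $1+(\upalpha_1-\upalpha_1^{q^n})/(c_0+\upalpha_1^{q^n})$, use $\sum_{c}1=q=0$, and invoke the logarithmic-derivative identity $\sum_{c\in\F_q}(x+c)^{-1}=-1/(x^q-x)=-1/\prod_c(c+x)$ at $x=\upalpha_1^{q^n}$; this reaches the same closed form in two lines, uniformly in $q$, and avoids the symmetric-function bookkeeping entirely. The absolute-value computations are then identical in both treatments. The only point you should make explicit is that the inner sums in part (2) require $\upbeta\notin\F_q$ so that the formula of part (1) is nondegenerate, but as you note this is immediate from $|\upbeta|=|\upalpha_i|>1$.
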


\begin{proof} 
Write $\upalpha=\upalpha_{1}$.  Then 
\begin{align}\label{lemmafraction} \Upomega_{1}^{\mathfrak{a}}(q^{n}-1)=
\sum_{c}\frac{c+\upalpha}{c+\upalpha^{q^{n}}}=
 \frac{\sum_{c} (c+\upalpha)\prod_{d\not=c}(d+\upalpha^{q^{n}})}{\prod_{c}(c+\upalpha^{q^{n}})} .
\end{align}
Denote by $s_{i}(c)$ the $i$th elementary symmetric function on $\F_{q}-\{ c\}$.  Thus, $s_{0}(c)=1$, $s_{1}(c)=\sum_{d\not=c}d$, etc. Then the numerator of (\ref{lemmafraction}) may be written as 
\begin{align*}
\sum_{j=0}^{q-1}\bigg(\sum_{c} (c+\upalpha)s_{q-1-j}(c)\bigg)\upalpha^{jq^{n}} .\\
\end{align*}
First note that there is no constant term, and the coefficient of $\upalpha$ is $\prod_{d\not=0}d=-1$.
Now
\[  \sum_{c}cs_{q-2}(c)= \sum_{c\not=0}cs_{q-2}(c)=\sum_{c\not=0} c\prod_{d\not=0,c}d =\sum_{c\not=0}c\cdot (- c^{-1})=(q-1)(-1)=1,\]
which is the coefficient of $\upalpha^{q^{n}}$.
Moreover, $\sum_{c}s_{q-2}(c)=  s_{q-2}(0)  -\sum_{c\not=0}c^{-1}=0$, so the $\upalpha^{q^{n}+1}$ term vanishes.  
For $i<q-2$, we claim that
\begin{align*}
\sum_{c} cs_{i}(c)=0= \sum_{c} s_{i}(c). \end{align*}
When $i=0$,  $s_{0}(c)=1$ for all $c$, 
the terms $\upalpha^{q^{n}(q-1)}$, $\upalpha^{q^{n}(q-1)+1}$ have coefficients $\sum_{c}c=\sum_{c}1=0$ and so vanish.
When $i=1$, we have $q>3$,
so $s_{1}(c)=-c$ and  \[  \sum_{c}s_{1}(c)=-\sum_{c}c=0=-\sum_{c}c^{2} =\sum_{c}cs_{1}(c)\]
since the sums occurring above are power sums over $\F_{q}$ of exponent $1,2<q-1$.
For general $i<q-2$, we have $q>i+2$ and
\[ \sum_{c} s_{i}(c) = \sum_{c} P(c)\]
where $P(X)$ is a polynomial over $\F_{q}$ of degree $i<q-2$.   Hence  $\sum_{c} P(c)= \sum_{c} cP(c)=0$, since again, these are sums of powers of $c$ of exponent less than $q-1$.
Thus 
the numerator is $ \upalpha^{q^{n}}-\upalpha $
and the absolute value claim follows immediately.
When $i>1$, for each $\vec{c}$, let $\vec{c}_{+}=(c_{1},\dots, c_{i-1})$ and write 
\[ \upalpha_{\vec{c}_{+}} = c_{1}\upalpha_{1} + \cdots + c_{i-1}\upalpha_{i-1} +\upalpha_{i}.\]
Note trivially that $|\upalpha_{\vec{c}_{+}}|=|\upalpha_{i}|$.  Then by part (1) of this Lemma,
\begin{align*}
 \left|  \Upomega_{i}^{\mathfrak{a}}(q^{n}-1)\right| = \left| \sum_{\vec{c}_{+}}\sum_{c} (c+\upalpha_{\vec{c}_{+}})^{1-q^{n}}\right|
 \leq \max \{ |\upalpha_{\vec{c}_{+}}|^{q^{n}(1-q)} \} =  |\upalpha_{i}|^{q^{n}(1-q)}.
\end{align*}
\end{proof}

In what follows, we write 
\[ \hat{\upzeta}^{\mathfrak{a}}(n(q-1)) = \upzeta^{\mathfrak{a}}(n(q-1))-1 .\]
By Lemma \ref{omegalemma} we have immediately

\begin{coro}\label{boundonzetaa}  Let $\mathfrak{a}$ be as above.  Then for all $n\in\N$, 
\[  \left|\hat{\upzeta}^{\mathfrak{a}}(q^{n}-1)\right|  =\left|  \Upomega_{1}^{\mathfrak{a}}(q^{n}-1)\right|
=|\upalpha_{1}|^{q^{n}(1-q)} <1.
\]
\end{coro}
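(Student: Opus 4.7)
The plan is to reduce Corollary \ref{boundonzetaa} directly to Lemma \ref{omegalemma} together with Lemma \ref{basislemma} via the non-archimedean (ultrametric) triangle inequality. First I would unpack the definitions: since $\zeta^{\mathfrak{a}}(n(q-1)) = 1 + \sum_{i \geq 1} \Omega_i^{\mathfrak{a}}(n(q-1))$, the ``hatted'' zeta is exactly the tail
\[ \hat{\upzeta}^{\mathfrak{a}}(q^n-1) \;=\; \sum_{i=1}^{\infty}\Upomega_i^{\mathfrak{a}}(q^n-1). \]
So the claim amounts to showing that the absolute value of this tail is governed by the $i=1$ summand.

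Next I would invoke Lemma \ref{omegalemma}(2) to bound $|\Upomega_i^{\mathfrak{a}}(q^n-1)| \leq |\upalpha_i|^{q^n(1-q)}$ for every $i\geq 1$. Because Lemma \ref{basislemma} (together with the convention that the basis elements of $\mathfrak{a}$ are listed in strictly increasing degree) forces $1 < |\upalpha_1| < |\upalpha_2| < \cdots$ and because the exponent $q^n(1-q)$ is strictly negative, this chain of inequalities flips to give
\[ |\Upomega_i^{\mathfrak{a}}(q^n-1)| \;\leq\; |\upalpha_i|^{q^n(1-q)} \;<\; |\upalpha_1|^{q^n(1-q)} \quad \text{for all } i\geq 2. \]
Meanwhile Lemma \ref{omegalemma}(1) gives the $i=1$ term exactly: $|\Upomega_1^{\mathfrak{a}}(q^n-1)| = |\upalpha_1|^{q^n(1-q)}$.

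Finally I would combine these via the ultrametric inequality on $k_\infty$ (and its extension to $\mathbf{C}_\infty$): when one summand of a non-archimedean convergent series has strictly larger absolute value than all the others, the absolute value of the sum equals that of the dominant term. The bounds above make the $i=1$ term dominant, so
\[ \bigl|\hat{\upzeta}^{\mathfrak{a}}(q^n-1)\bigr| \;=\; \bigl|\Upomega_1^{\mathfrak{a}}(q^n-1)\bigr| \;=\; |\upalpha_1|^{q^n(1-q)}, \]
and since $|\upalpha_1|>1$ with $q^n(1-q)<0$, this quantity is $<1$.

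There is no genuine obstacle here; the one item that requires a brief remark is convergence of the infinite series $\sum_i \Upomega_i^{\mathfrak{a}}(q^n-1)$, which follows because $|\upalpha_i|\to\infty$ forces $|\upalpha_i|^{q^n(1-q)}\to 0$, so the terms go to zero in the complete non-archimedean field $\mathbf{C}_\infty$ and the series converges. The only subtlety worth flagging is that this argument uses the strict inequalities $|\upalpha_1|<|\upalpha_i|$ guaranteed by the normalizations imposed at the start of the section (the fact that $g$ was chosen of minimal degree and that the basis elements have pairwise distinct degrees), so the dominance of the $i=1$ summand is genuinely strict and the ultrametric equality (not merely inequality) holds.
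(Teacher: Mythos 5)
Your argument is correct and is exactly the reasoning the paper leaves implicit: the paper simply states that the Corollary follows ``immediately'' from Lemma \ref{omegalemma}, and your write-up supplies the intended details (the $i=1$ term is computed exactly by part (1), the $i\geq 2$ terms are strictly smaller by part (2) together with the strict monotonicity $1<|\upalpha_1|<|\upalpha_2|<\cdots$ and the negativity of the exponent, and the ultrametric equality finishes). No discrepancy with the paper's approach.
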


\begin{theo}\label{jadifffrom1}  For all $\mathfrak{a}\subset A_{\infty_{1}}$ non principal, $j(\mathfrak{a})\not=j((1))$. 
\end{theo}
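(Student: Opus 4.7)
The plan is to prove $j(\mathfrak{a}) \neq j((1))$ by showing that the two corresponding zeta-ratios have strictly different ultrametric distances to $1$. Since $j$ depends only on the ideal class, I may replace $\mathfrak{a}$ by $\mathfrak{a}^{\star}=g^{-1}\mathfrak{a}$ as in Lemma \ref{basislemma}, so that the chosen basis $\{1,\upalpha_{1},\upalpha_{2},\dots\}$ satisfies $1<|\upalpha_{1}|<|f|=q^{d}$. For the principal ideal $(1)=A_{\infty_{1}}$, the analogously ordered monic basis (indexed by the Weierstrass non-gaps at $\infty_{1}$) begins $\{1,f,fT,\dots\}$, so $\upalpha_{1}=f$ with $|\upalpha_{1}|=q^{d}$. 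Since the scalar prefactor $(T^{q^{2}}-T)/(T^{q}-T)^{q+1}$ is nonzero, $j(\mathfrak{a})=j((1))$ is equivalent to the equality of the zeta-ratios
$\upzeta^{\mathfrak{a}^{\star}}(q^{2}-1)/\upzeta^{\mathfrak{a}^{\star}}(q-1)^{q+1}$ and $\upzeta^{(1)}(q^{2}-1)/\upzeta^{(1)}(q-1)^{q+1}$.

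Abbreviate $\hat{\upzeta}_{n}:=\hat{\upzeta}^{\mathfrak{a}^{\star}}(q^{n}-1)$, so $\upzeta^{\mathfrak{a}^{\star}}(q^{n}-1)=1+\hat{\upzeta}_{n}$; by Corollary \ref{boundonzetaa} one has the exact equality $|\hat{\upzeta}_{n}|=|\upalpha_{1}|^{q^{n}(1-q)}<1$. Exploiting the characteristic-$p$ Frobenius identity $(1+x)^{q+1}=1+x+x^{q}+x^{q+1}$, I would write
\[
\frac{\upzeta^{\mathfrak{a}^{\star}}(q^{2}-1)}{\upzeta^{\mathfrak{a}^{\star}}(q-1)^{q+1}}-1 \;=\; \frac{\hat{\upzeta}_{2}-\hat{\upzeta}_{1}-\hat{\upzeta}_{1}^{q}-\hat{\upzeta}_{1}^{q+1}}{1+\hat{\upzeta}_{1}+\hat{\upzeta}_{1}^{q}+\hat{\upzeta}_{1}^{q+1}}.
\]
Among the four terms in the numerator, $-\hat{\upzeta}_{1}$ is uniquely dominant: since $|\upalpha_{1}|>1$, one compares the exponents $q-q^{2}$ (of $|\hat{\upzeta}_{1}|$), $q^{2}-q^{3}$ (of $|\hat{\upzeta}_{2}|$ and $|\hat{\upzeta}_{1}^{q}|$), and $q-q^{3}$ (of $|\hat{\upzeta}_{1}^{q+1}|$); both $(q-q^{2})-(q^{2}-q^{3})=q(q-1)^{2}>0$ and $(q-q^{2})-(q-q^{3})=q^{2}(q-1)>0$, so $|\hat{\upzeta}_{1}|$ strictly exceeds the absolute values of the other three numerator terms. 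The denominator has absolute value $1$. The ultrametric property therefore gives
\[
\left| \frac{\upzeta^{\mathfrak{a}^{\star}}(q^{2}-1)}{\upzeta^{\mathfrak{a}^{\star}}(q-1)^{q+1}}-1 \right| \;=\; |\upalpha_{1}|^{q-q^{2}}.
\]

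Finally, since the exponent $q-q^{2}$ is negative, the map $|\upalpha_{1}|\mapsto|\upalpha_{1}|^{q-q^{2}}$ is strictly decreasing, so distinct values of $|\upalpha_{1}|$ yield distinct results: for non-principal $\mathfrak{a}^{\star}$ with $|\upalpha_{1}|\in(1,q^{d})$ the value strictly exceeds $q^{d(q-q^{2})}$, whereas for $(1)$ it equals $q^{d(q-q^{2})}$. The two zeta-ratios therefore differ in absolute value, hence differ, so $J(\mathfrak{a})\neq J((1))$ and consequently $j(\mathfrak{a})\neq j((1))$. The main obstacle is navigating characteristic-$p$ cancellation when extracting the leading ultrametric term; this is handled automatically by Corollary \ref{boundonzetaa}, which provides an exact equality rather than just a bound on $|\hat{\upzeta}_{n}|$, together with the clean Frobenius expansion of $(1+x)^{q+1}$ that avoids any problematic binomial coefficients.
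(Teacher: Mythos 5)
Your proof is correct and follows essentially the same route as the paper: both arguments rest on the normalization of Lemma \ref{basislemma}, the exact leading-term identity of Corollary \ref{boundonzetaa}, and the characteristic-$p$ expansion of the $(q+1)$-st power, with $-\hat{\upzeta}^{\mathfrak{a}}(q-1)$ emerging as the unique dominant term of size $|\upalpha_{1}|^{q(1-q)}$. The only (cosmetic) difference is that you compute $|\tilde{J}(\mathfrak{b})-1|$ for each class separately and compare, whereas the paper cross-multiplies and estimates the numerator of $\tilde{J}(\mathfrak{a})-\tilde{J}((1))$ directly; note also that your closing phrase ``the two zeta-ratios differ in absolute value'' should read ``their differences from $1$ differ in absolute value,'' since both ratios themselves have absolute value $1$.
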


\begin{proof}  
It will be enough to prove
the Theorem with $\mathfrak{a}$ replaced by
the fractional ideal
$g^{-1}\mathfrak{a}$ studied above.
For any ideal $\mathfrak{b}$ we denote $\tilde{J}(\mathfrak{b})= \upzeta^{\mathfrak{b}}(q^{2}-1)/\upzeta^{\mathfrak{b}}(q-1)^{q+1}$.
It will suffice to show that $\tilde{J}(\mathfrak{a})\not= \tilde{J}((1))$ i.e. that the numerator of
\[  \tilde{J}(\mathfrak{a})- \tilde{J}((1)) =\frac{\upzeta^{\mathfrak{a}}(q^{2}-1)\cdot
\upzeta^{(1)}(q-1)^{q+1}-\upzeta^{\mathfrak{a}}(q-1)^{q+1}\cdot
\upzeta^{(1)}(q^{2}-1)}{ \left(\upzeta^{\mathfrak{a}}(q-1)\cdot \upzeta^{(1)}(q-1)\right)^{q+1} }\]
does not vanish.  This numerator can be written
\begin{align}\label{numerator} (\hat{\upzeta}^{\mathfrak{a}}(q^{2}-1)+1)(\hat{\upzeta}^{(1)}(q-1)^{q}+1)(\hat{\upzeta}^{(1)}(q-1)+1)- &  \\
 (\hat{\upzeta}^{(1)}(q^{2}-1)+1)(\hat{\upzeta}^{\mathfrak{a}}(q-1)^{q}+1)(\hat{\upzeta}^{\mathfrak{a}}(q-1)+1). \nonumber 
 \end{align}
 Developing the products, by Corollary \ref{boundonzetaa} we see that (\ref{numerator}) can be written
 \[ -\hat{\upzeta}^{\mathfrak{a}}(q-1)+\hat{\upzeta}^{(1)}(q-1)   + 
  \hat{\upzeta}^{\mathfrak{a}}(q^{2}-1)-\hat{\upzeta}^{(1)}(q^{2}-1)+\text{lower}. \]
 Therefore,  by Lemma \ref{omegalemma} and Corollary \ref{boundonzetaa}, the absolute value of (\ref{numerator}) is 
\begin{align*}  \left|\hat{\upzeta}^{\mathfrak{a}}(q-1) \right|=|\upalpha_{1}|^{q(1-q)} & >\max \left\{  |f|^{q(1-q)} ,|\upalpha_{1}|^{q^{2}(1-q)}  ,|f|^{q^{2}(1-q)}  \right\} \\
& = 
\max \left\{\left|\hat{\upzeta}^{(1)}(q-1) \right|, 
\left|\hat{\upzeta}^{\mathfrak{a}}(q^{2}-1)\right|,\left|\hat{\upzeta}^{(1)}(q^{2}-1) \right| 
\right\} \end{align*}
  and we are done.
 \end{proof}

 \begin{coro}\label{multicoro} $j^{\rm qt}$ is multivalued.
 \end{coro}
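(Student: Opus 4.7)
The plan is to combine the previous two main theorems directly. By Theorem \ref{jfidthem}, for any quadratic unit $f$ with $\deg_T(f) = d$, we have
\[ j^{\rm qt}(f) = \{ j(\mathfrak{a}_i) \mid i = 0, \ldots, d-1 \}, \]
where $\mathfrak{a}_i = (f, fT, \ldots, fT^i) \subset A_{\infty_1}$. Since $\mathfrak{a}_0 = (f)$ is principal, $j(\mathfrak{a}_0) = j((1))$, because $j$ depends only on the ideal class.

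First I would exhibit any quadratic unit $f$ with $d := \deg_T(f) \geq 2$; for instance, take $f$ a root of $X^2 - a X - b = 0$ with $a \in A$ monic of degree $2$ and $b \in \F_q^{\ast}$. Such an $f$ lies in $k_{\infty} - k$ and is a quadratic unit in the sense required by the preceding sections. Then $\mathfrak{a}_{d-1} = (f, fT, \ldots, fT^{d-1})$ is non-principal (as observed in the paragraph preceding Lemma 3 of this section, following the identification $A_{\infty_1} = \F_q[f, fT, \ldots, fT^{d-1}]$).

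Next I would invoke Theorem \ref{jadifffrom1} applied to $\mathfrak{a} = \mathfrak{a}_{d-1}$: since $\mathfrak{a}_{d-1}$ is non-principal, we conclude $j(\mathfrak{a}_{d-1}) \neq j((1)) = j(\mathfrak{a}_0)$. Therefore $j^{\rm qt}(f)$ contains at least the two distinct elements $j(\mathfrak{a}_0)$ and $j(\mathfrak{a}_{d-1})$, so $\# j^{\rm qt}(f) \geq 2$, showing $j^{\rm qt}$ is genuinely multivalued at $f$.

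There is no real obstacle here, since all the hard analytic and algebraic content has been absorbed into Theorems \ref{jfidthem} and \ref{jadifffrom1}. The only small care needed is to confirm that the chosen $f$ does fall under the hypotheses of Theorem \ref{jfidthem} (quadratic unit with $d \geq 2$) and that $\mathfrak{a}_{d-1}$ is indeed non-principal so that Theorem \ref{jadifffrom1} applies; both are immediate from the setup of Section 1.
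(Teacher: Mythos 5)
Your argument is exactly the paper's: take a quadratic unit $f$ with $d = \deg_T(f) \geq 2$, use Theorem \ref{jfidthem} to place both $j(\mathfrak{a}_0) = j((1))$ and $j(\mathfrak{a}_{d-1})$ in $j^{\rm qt}(f)$, and then invoke Theorem \ref{jadifffrom1} with the non-principal ideal $\mathfrak{a}_{d-1}$ to see these two values differ. This is correct and matches the paper's proof in substance.
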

 
 \begin{proof} If $f$ satisfies $v_{\infty}(f)=-d<-1$, then $j((1)), j(\mathfrak{a}_{d-1})\in j(f)$ by Theorem \ref{jfidthem}, and are distinct by Theorem \ref{jadifffrom1}.
 \end{proof}
 
  For any fractional ideal $\mathfrak{c}$ with basis of the form specified by Lemma 2, we write
\[ j(\mathfrak{c}) =(T^{q}-T)^{q+2}\cdot \frac{\upzeta^{\mathfrak{c}}(q-1)^{q+1}}{\Updelta^{\mathfrak{c}}}\]
where
\[ \Updelta^{\mathfrak{c}} = U^{\mathfrak{c}}-V^{\mathfrak{c}}\]
with 
\begin{align*}
U^{\mathfrak{c}} & = \left( (T^{q}-T)(1+\Upomega_{1}^{\mathfrak{c}}(q-1)+\cdots ) \right)^{q+1} \\
& =T^{q(q+1)}-T^{q^{2}+1}-T^{2q}+T^{q+1}+T^{q(q+1)}\sum_{c} (c+\upalpha_{1})^{1-q} + \text{lower}
\end{align*}
(in the above, we use that $(T^{q}-T)^{q+1}=(T^{q^{2}}-T^{q})(T^{q}-T)$) and 
\begin{align*} V^{\mathfrak{c}} &= (T^{q}-T)(T^{q^{2}} -T)(1+\Upomega_{1}^{\mathfrak{c}}(q^{2}-1)+\cdots  ) \\
& = T^{q(q+1)}-T^{q^{2}+1}-T^{q+1}+T^{2}+T^{q(q+1)}\sum_{c} (c+\upalpha_{1})^{1-q^{2}} + \text{lower}.
\end{align*}
Thus \[ \Updelta^{\mathfrak{c}}= -T^{2q}+2T^{q+1}-T^{2} +T^{q(q+1)}\sum_{c} (c+\upalpha_{1})^{1-q} + \text{lower}.\]

\begin{lemm}\label{DeltaLemma}  Let $\mathfrak{c}$, $ \Updelta^{\mathfrak{c}}$ be as above.  Then
\[
|\Updelta^{\mathfrak{c}}| =\left\{ 
\begin{array}{ll}
q^{q+1} & \text{if  $|\upalpha_{1}|=q$} \\
q^{2q} & \text{otherwise.} \\
 \end{array}
\right. 
\]
\end{lemm}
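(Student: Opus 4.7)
The plan is to compute $|\Updelta^{\mathfrak{c}}|$ directly from the expansion established immediately before the lemma, using the ultrametric absolute value on $k_{\infty}$ and Lemma \ref{omegalemma}. The base polynomial $-T^{2q}+2T^{q+1}-T^{2}$ has absolute value $q^{2q}$ (with $T^{2q}$ dominant for $q\geq 2$), while Lemma \ref{omegalemma}(1) gives
\[ \left|T^{q(q+1)}\sum_{c}(c+\upalpha_{1})^{1-q}\right| = q^{q(q+1)}|\upalpha_{1}|^{q(1-q)}. \]
Writing $|\upalpha_{1}|=q^{s}$ with integer $s\geq 1$, a direct arithmetic check shows that this correction matches the base at $q^{2q}$ precisely when $s=1$, and is strictly smaller when $s\geq 2$.

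First I will handle the case $s\geq 2$. Here the strategy is simply to verify, using Lemma \ref{omegalemma}(2) and the inequalities $|\upalpha_{i}|>|\upalpha_{1}|$ of Lemma \ref{basislemma}, that every contribution absorbed into ``lower'' -- those from $\Upomega_{i}^{\mathfrak{c}}(q-1)$ for $i\geq 2$, from the $x^{q}$ and $x^{q+1}$ pieces of $(1+x)^{q+1}$, and from $\Upomega_{1}^{\mathfrak{c}}(q^{2}-1)$ in $V^{\mathfrak{c}}$ -- is strictly smaller than $q^{2q}$. The ultrametric inequality then immediately yields $|\Updelta^{\mathfrak{c}}|=q^{2q}$, governed by the dominant $-T^{2q}$.

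The case $s=1$ is the heart of the proof. Here the key algebraic input is the identity $\Upomega_{1}^{\mathfrak{c}}(q-1) = (\upalpha_{1}^{q}-\upalpha_{1})^{1-q}$, which follows from the closed form in Lemma \ref{omegalemma}(1) together with the Frobenius identity $(\upalpha^{q}-\upalpha)^{q}=\upalpha^{q^{2}}-\upalpha^{q}$. Writing $\upalpha_{1}=c_{1}T+R$ with $c_{1}\in\F_{q}^{*}$ and $|R|\leq 1$, and using $c_{1}^{q-1}=1$ together with the factorization $(T^{q}-T)^{q+1}=(T^{q}-T)(T^{q^{2}}-T^{q})$, I will rewrite
\[ (T^{q}-T)^{q+1}\Upomega_{1}^{\mathfrak{c}}(q-1)=(T^{q}-T)^{2}\left(1+\frac{R^{q}-R}{c_{1}(T^{q}-T)}\right)^{1-q}. \]
Because $(T^{q}-T)^{q+1}-(T^{q}-T)(T^{q^{2}}-T)=-(T^{q}-T)^{2}$, the leading $(T^{q}-T)^{2}$ from this expansion cancels the base $-(T^{q}-T)^{2}=-T^{2q}+2T^{q+1}-T^{2}$ in $U^{\mathfrak{c}}-V^{\mathfrak{c}}$ exactly. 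Aggregating the subleading expansion of $(1+E)^{1-q}$ in powers of the small quantity $E=(R^{q}-R)/(c_{1}(T^{q}-T))$ with the remaining ``lower'' contributions of size $q^{q+1}$ -- most notably $(-T^{q^{2}+1})\Upomega_{1}^{\mathfrak{c}}(q-1)$, which has absolute value exactly $q^{q+1}$ when $s=1$ -- I expect to read off a surviving $T^{q+1}$ monomial with nonzero coefficient, yielding $|\Updelta^{\mathfrak{c}}|=q^{q+1}$.

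The main obstacle is precisely this final bookkeeping. Several sources feed into the $T^{q+1}$ coefficient simultaneously, and I must track them all to confirm their sum is nonzero; the cleanest approach is likely to expand $(T^{q}-T)^{q+1}\upzeta^{\mathfrak{c}}(q-1)^{q+1}$ and $(T^{q}-T)(T^{q^{2}}-T)\upzeta^{\mathfrak{c}}(q^{2}-1)$ through the appropriate order using the identities above, subtract, and extract the $T^{q+1}$ coefficient uniformly in the characteristic. Particular care will be needed in small characteristic (notably $\mathrm{char}\,k=2$), where the middle term $2T^{q+1}$ of the base polynomial vanishes and ordinarily ``negligible'' contributions can rise into the same range as the surviving terms.
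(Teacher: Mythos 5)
Your treatment of the case $|\upalpha_{1}|\geq q^{2}$ is correct and is essentially the paper's argument: every term of $\Updelta^{\mathfrak{c}}$ other than $-T^{2q}$ then has absolute value $<q^{2q}$, and the ultrametric inequality gives $|\Updelta^{\mathfrak{c}}|=q^{2q}$.

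The case $|\upalpha_{1}|=q$ is where the proposal has a genuine gap, and the gap cannot be closed: the ``final bookkeeping'' you defer does not produce a surviving $T^{q+1}$. Your own identities settle this. Writing $\upalpha_{1}^{q}-\upalpha_{1}=c_{1}(T^{q}-T)(1+E)$ with $E=(R^{q}-R)/(c_{1}(T^{q}-T))$, so that $|E|\leq q^{-q}$, the \emph{entire} contribution of $\Upomega_{1}^{\mathfrak{c}}(q-1)$ to $U^{\mathfrak{c}}$ is $(T^{q}-T)^{q+1}\Upomega_{1}^{\mathfrak{c}}(q-1)=(T^{q}-T)^{2}(1+E)^{1-q}$; since the constant parts of $U^{\mathfrak{c}}-V^{\mathfrak{c}}$ contribute exactly $(T^{q}-T)^{q+1}-(T^{q}-T)(T^{q^{2}}-T)=-(T^{q}-T)^{2}$, one gets
\[ -(T^{q}-T)^{2}+(T^{q}-T)^{q+1}\Upomega_{1}^{\mathfrak{c}}(q-1)=(T^{q}-T)^{2}\,\frac{E-E^{q}}{(1+E)^{q}}, \]
whose absolute value is $q^{2q}|E|\leq q^{q}<q^{q+1}$. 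Every remaining contribution (the terms $\Upomega_{i}^{\mathfrak{c}}(q-1)$ with $i\geq 2$, the $x^{q}$ and $x^{q+1}$ pieces of $(1+x)^{q+1}$, and the correction in $V^{\mathfrak{c}}$) is bounded by $q^{3q-q^{2}}\leq q^{q}$. Hence the method yields $|\Updelta^{\mathfrak{c}}|\leq q^{q}$ when $|\upalpha_{1}|=q$, not $q^{q+1}$. Concretely, the term you rightly flagged, $-T^{q^{2}+1}\Upomega_{1}^{\mathfrak{c}}(q-1)=-T^{q+1}+T^{2}+(\text{smaller})$, exactly cancels the $T^{q+1}-T^{2}$ produced by the truncation $-T^{2q}+2T^{q+1}-T^{2}+T^{q(q+1)}\Upomega_{1}^{\mathfrak{c}}(q-1)$. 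This is precisely the misstep in the paper's own proof: it absorbs $-T^{q^{2}+1}\Upomega_{1}^{\mathfrak{c}}(q-1)$ into ``lower'' and evaluates only the truncated expression $-T^{2q}+2T^{q+1}+T^{q(q+1)}\sum_{c}(c+T)^{1-q}$, which indeed has absolute value $q^{q+1}$ but is not $\Updelta^{\mathfrak{c}}$ up to genuinely smaller terms. A direct check with $q=2$ and $\mathfrak{c}=\langle 1,T,f,fT,\dots\rangle_{\F_{q}}$ confirms $|\Updelta^{\mathfrak{c}}|\leq q^{2}<q^{3}$. So your instinct to track that term was exactly right, but carrying the computation through contradicts the asserted value rather than confirming it; as written, the proposal simply asserts the conclusion at the one step where the argument actually fails.
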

 \begin{proof}  Suppose first that $|\upalpha_{1}|=q$.  Since $\upalpha_{1}$ is a quotient of {\it monic} polynomials in $f, fT,\dots$ and since $a$ (the linear coefficient in $f^{2}=af+b$) is monic in $T$, we may write $\upalpha_{1}=T+\updelta$
 where $|\updelta |<q$.  In particular, by Lemma  \ref{omegalemma}, item (1), we have
 \begin{align*} \left| \sum_{c} (c+\upalpha_{1})^{1-q} -\sum_{c} (c+T)^{1-q} \right| & = \left|\frac{(\upalpha_{1}^{q}-\upalpha_{1})\prod_{c}(c+T^{q})  
 -(T^{q}-T)\prod_{c}(c+\upalpha_{1}^{q})}{\prod_{c} \big( (c+\upalpha_{1}^{q})(c+T^{q})\big)}\right| < q^{q(1-q)}.   
 \end{align*}
 Therefore, we may write 
 \[ \Updelta^{\mathfrak{c}}= -T^{2q}+2T^{q+1}+T^{q(q+1)}\sum_{c} (c+T)^{1-q} + \text{lower}.\]
 By Lemma \ref{omegalemma}, item (1),
 \begin{align*}
  -T^{2q} +2T^{q+1}+ T^{q(q+1)}\sum_{c} (c+T)^{1-q}  & =  \\ 
  \frac{(-T^{2q}+2T^{q+1} )\cdot \prod_{c} (c+T^{q})+T^{q(q+1)}\cdot (T^{q}-T)}{\prod_{c} (c+T^{q})} & = \\
  \frac{T^{q(q+1)+1}+ \text{lower} }{\prod_{c} (c+T^{q})}.
  \end{align*}
It follows that,
 \[ \left| -T^{2q}+2T^{q+1}+T^{q(q+1)}\sum (c+T)^{1-q} \right|=q^{q+1} ,
 \]
 and we conclude that in this case,
 \[ | \Updelta^{\mathfrak{c}}|=q^{q+1}.\]  
  If $|\upalpha_{1}|>q$, by Lemma \ref{omegalemma}, item (1),
 \[  \left|T^{q(q+1)}\sum (c+\upalpha_{1})^{1-q}\right|=q^{q(q+1)}\cdot |\upalpha_{1}|^{q(1-q)}<q^{2q}=|T^{2q}| \]
 hence  \[ | \Updelta^{\mathfrak{c}}|=q^{2q}.\]
 \end{proof}

 Given $\mathfrak{b}$ denote by $\mathfrak{b}_{i}=\mathfrak{b}\mathfrak{a}_{i}$ and write
 \[ {\sf N}(j (\mathfrak{b})) : = \prod_{i=0}^{d-1} j (\mathfrak{b}_{i}) .\]
 
 \begin{theo}\label{normdiff} If $\mathfrak{b}\not\in [ \mathfrak{a}_{i}]$ for all $i$, then ${\sf N}(j (\mathfrak{b}))\not={\sf N}(j ((1)))$.
 \end{theo}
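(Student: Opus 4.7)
My plan is to adapt the asymptotic-comparison strategy of Theorem \ref{jadifffrom1} to the multiplicative norm setting. The guiding idea is that the difference ${\sf N}(j(\mathfrak{b})) - {\sf N}(j((1)))$, when expanded by telescoping, is dominated by a single leading correction whose absolute value is strictly positive precisely when $[\mathfrak{b}]$ lies outside $Z = \{[\mathfrak{a}_{0}], \ldots, [\mathfrak{a}_{d-1}]\} \subset {\sf Cl}_{A_{\infty_{1}}}$.

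First I would normalize each factor. For each $i$, choose a fractional-ideal representative of the class $[\mathfrak{b}_{i}] = [\mathfrak{b}\mathfrak{a}_{i}]$ with basis satisfying Lemma \ref{basislemma}, recording the invariant $|\upalpha_{1}^{(\mathfrak{b}_{i})}|$, and do the same for $\mathfrak{a}_{i}$, recording $|\upalpha_{1}^{(\mathfrak{a}_{i})}|$ (with $\mathfrak{a}_{0}=(1)$ treated separately, as it admits no $\upalpha_{1}$). By Corollary \ref{boundonzetaa} and Lemma \ref{DeltaLemma}, each $j(\mathfrak{c})$ admits a uniform asymptotic expansion whose leading correction has absolute value governed by $|\upalpha_{1}^{(\mathfrak{c})}|^{q(1-q)}$.

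Second, I would expand ${\sf N}(j(\mathfrak{b})) - {\sf N}(j((1)))$ via the telescoping identity
\[
\prod_{i=0}^{d-1} x_{i} - \prod_{i=0}^{d-1} y_{i} = \sum_{k=0}^{d-1} (x_{k} - y_{k}) \prod_{i<k} x_{i} \prod_{i>k} y_{i}
\]
with $x_{i} = j(\mathfrak{b}_{i})$ and $y_{i} = j(\mathfrak{a}_{i})$. Each summand factors as a universal ``bulk'' piece times $(j(\mathfrak{b}_{k}) - j(\mathfrak{a}_{k}))$. By the same reasoning as in the proof of Theorem \ref{jadifffrom1}, each such difference has absolute value $\max(|\upalpha_{1}^{(\mathfrak{b}_{k})}|,|\upalpha_{1}^{(\mathfrak{a}_{k})}|)^{q(1-q)}$ times a computable non-zero coefficient. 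The full sum is therefore dominated by the index $k$ for which the associated $\upalpha_{1}$ achieves its smallest absolute value.

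Finally, I would leverage $[\mathfrak{b}] \not\in Z$ to exclude cancellation of the dominant terms: one isolates an index $k_{0}$ whose contribution has strictly largest absolute value --- say a $\mathfrak{b}_{k_{0}}$ realizing the minimal $|\upalpha_{1}|$ among all $2d$ normalized ideals --- and verifies non-vanishing via the explicit coefficient computation of Theorem \ref{jadifffrom1}. The main obstacle is exactly this last step: when several indices $k$ produce corrections of the same order of magnitude, one must show directly that they cannot all cancel, which will require either an injectivity refinement of Theorem \ref{jadifffrom1} (namely $j(\mathfrak{c}) \neq j(\mathfrak{c}')$ for all distinct classes, not merely against $[(1)]$) or a combinatorial argument using the specific structure $\mathfrak{a}_{i} = (f, fT, \ldots, fT^{i})$ to pin down the $\upalpha_{1}$-profile of a coset $[\mathfrak{b}]Z$, and thereby force $[\mathfrak{b}] \in Z$ whenever the norms agree.
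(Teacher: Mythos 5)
Your strategy is right in outline, but the step you yourself flag as ``the main obstacle'' is precisely where the whole proof lives, and neither of the two escape routes you sketch is carried out (nor is either the one that works). The telescoping identity alone cannot finish the argument: for all but one of the $d$ indices the factors $j(\mathfrak{b}_k)$ and $j(\mathfrak{a}_k)$ agree to leading order, so their differences are genuinely lower-order, and your claim that each difference $j(\mathfrak{b}_k)-j(\mathfrak{a}_k)$ has absolute value $\max\bigl(|\upalpha_1^{(\mathfrak{b}_k)}|,|\upalpha_1^{(\mathfrak{a}_k)}|\bigr)^{q(1-q)}$ times a non-zero coefficient fails exactly when the two $\upalpha_1$'s have the same absolute value: if both equal $q$, both leading corrections are $\sum_c(c+T)^{1-q}$ up to an error of size $q^{q(1-q)}$ (as in the proof of Lemma \ref{DeltaLemma}) and they cancel. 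So your sum of telescoping terms admits precisely the cancellation you worry about, and the proposal as written is incomplete.

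The paper's resolution is a two-step absolute-value argument built on Lemma \ref{DeltaLemma}, whose dichotomy you did not exploit: $|\Updelta^{\mathfrak{c}}|$ takes only the two values $q^{q+1}$ (when $|\upalpha_1|=q$) and $q^{2q}$ (otherwise). Since $|\upzeta^{\mathfrak{c}}(q-1)|=1$ for normalized $\mathfrak{c}$, the quotient $|{\sf N}(j((1)))/{\sf N}(j(\mathfrak{b}))|$ equals $|\prod_i\Updelta^{\mathfrak{b}_i}|/|\prod_i\Updelta^{\mathfrak{a}_i}|$, whose denominator is exactly $q^{(d-1)(q+1)+2q}$. If this quotient is not $1$ one is done; if it is $1$, the dichotomy forces a rigid profile on the $\mathfrak{b}_i$: exactly one index $i_0$ (WLOG $0$) with $|\upalpha|>q$ and all others with $|\upalpha_{(i)}|=q$. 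With that profile, the $d-1$ matching factors on each side share the same leading term $\Updelta$, so the difference of products collapses to $\Updelta^{d-1}T^{q(q+1)}\bigl(\sum_c(c+\upalpha)^{1-q}-\sum_c(c+f)^{1-q}\bigr)+\text{lower}$, concentrated in the single special factor. Non-vanishing then follows because $\mathfrak{b}_{i_0}=\mathfrak{b}\mathfrak{a}_{i_0}$ is non-principal --- this is where the hypothesis $[\mathfrak{b}]\notin\{[\mathfrak{a}_i]\}$ enters, since the classes $[\mathfrak{a}_i]$ form a group --- hence $1<|\upalpha|<|f|$ by Lemma \ref{basislemma}, and the two sums have distinct absolute values by Lemma \ref{omegalemma}. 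Some mechanism of this kind, pinning down which single index carries the dominant discrepancy before comparing leading terms, is what your argument is missing.
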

 
 \begin{proof} 
In the notation established above, we have for a constant $C\in A$,  \[  {\sf N}(j (\mathfrak{b})) =C^{d} \cdot \frac{\prod_{i=0}^{d-1}\upzeta^{\mathfrak{b}_{i}}(q-1)^{q+1}}{\prod_{i=0}^{d-1}\Updelta^{\mathfrak{b}_{i}} }.\]
Let us first analyze the absolute value of the quotient 
\begin{align}\label{normquotient} \left| \frac{ {\sf N}(j ((1)))}{ {\sf N}(j (\mathfrak{b}))}\right| =\left|\frac{\prod_{i=0}^{d-1}\Updelta^{\mathfrak{b}_{i}} }{\prod_{i=0}^{d-1}\Updelta^{\mathfrak{a}_{i}} }\right|,  \end{align}
where in (\ref{normquotient}) we use the fact that for normalized $\mathfrak{c}$,   $|\upzeta^{\mathfrak{c}}(q-1)|=1$. 
For $i\not=0$,
\[ U^{\mathfrak{a}_{i}}= T^{q(q+1)}-T^{q^{2}+1}-T^{2q}+T^{q+1}+T^{q(q+1)}\sum (c+T)^{1-q} + \text{lower}\]
and
\[ V^{\mathfrak{a}_{i}}= T^{q(q+1)}-T^{q^{2}+1}-T^{q+1}+T^{2}+T^{q(q+1)}\sum (c+T)^{1-q^{2}} + \text{lower}\]
so 
\[ \Updelta^{\mathfrak{a}_{i}}= -T^{2q}+2T^{q+1} +T^{q(q+1)}\sum (c+T)^{1-q} + \text{lower}.\]
 By Lemma \ref{DeltaLemma}, for $i\not=0$,
 \[ | \Updelta^{\mathfrak{a}_{i}}|=q^{q+1}.\]
 On the other hand, 
\[ \Updelta^{(1)}= -T^{2q}+2T^{q+1}+T^{q(q+1)}\sum (c+f)^{1-q} + \text{lower}\] and Lemma \ref{DeltaLemma} gives this time
\[  | \Updelta^{(1)}|=q^{2q}.  \]
Thus the absolute value of the denominator in (\ref{normquotient}) is $q^{(d-1)(q+1)+2q}$.  If the absolute value of  (\ref{normquotient}) is not 1, we are done.  So suppose 
otherwise i.e.\ that the numerator also has absolute value $q^{(d-1)(q+1)+2q}$. 
By Lemma \ref{DeltaLemma}, in order for the numerator to have absolute value $q^{(d-1)(q+1)+2q}$, it must be the case that: \begin{itemize}
\item[i.] For some $i_{0}$, $|\Updelta^{\mathfrak{b}_{i_{0}}}|=q^{2q}$.
\item[ii.] For all $i\not=i_{0}$, $|\Updelta^{\mathfrak{b}_{i}}|=q^{q+1}$.
\end{itemize}
Without loss of generality, we may assume that $i_{0}=0$ and denote by $\upalpha=\upalpha_{0,1},\upalpha_{(i)}=\upalpha_{i,1}$ the smallest degree basis element 
of $\mathfrak{b}$,$\mathfrak{b}_{i}$  not equal to $1$, $i\geq 1$.
For i.\ to occur, we must have that $|\upalpha|>q$.
For ii.\ to occur, we must have that $|\upalpha_{(i)}|=q$ for $i\geq 1$.
Hence, writing $\upalpha_{(i)}=T+\updelta_{(i)}$ as in the proof of Lemma \ref{DeltaLemma}, for $i\not=0$, we have
\[ \Updelta^{\mathfrak{b}_{i}} =-T^{2q}+2T^{q+1}+T^{q(q+1)}\sum (c+T)^{1-q} + \text{lower}=: \Updelta + \text{lower},\quad i\not=0  \]
as well as
\[ \Updelta^{\mathfrak{a}_{i}} =-T^{2q}+2T^{q+1} +T^{q(q+1)}\sum (c+T)^{1-q} + \text{lower}=: \Updelta + \text{lower}, \quad i\not=0 .\]
We also write
\[\Updelta^{\mathfrak{b}} = -T^{2q}+2T^{q+1}+T^{q(q+1)}\sum (c+\upalpha)^{1-q} + \text{lower}:= \Updelta_{\upalpha} + \text{lower}\]
and
\[\Updelta^{(1)} = -T^{2q}+2T^{q+1} +T^{q(q+1)}\sum (c+f)^{1-q} + \text{lower}:= \Updelta_{f} + \text{lower}.\]
Now the difference ${\sf N}(j ((1)))-{\sf N}(j (\mathfrak{b}))$ may be written as a fraction whose numerator is, up to a multiplicative constant, given by
\begin{align*}
\prod_{i=0}^{d-1} \Updelta^{\mathfrak{b}_{i}}\prod_{i=0}^{d-1} \upzeta^{\mathfrak{a}_{i}}(q-1)^{q+1}-
\prod_{i=0}^{d-1} \Updelta^{\mathfrak{a}_{i}}\prod_{i=0}^{d-1} \upzeta^{\mathfrak{b}_{i}}(q-1)^{q+1} & = \\
(\Updelta_{\upalpha} + \text{lower})\prod_{i\not=0}(\Updelta + \text{lower})(1+\text{lower}) -(\Updelta_{f} + \text{lower})\prod_{i\not=0}(\Updelta + \text{lower})(1+\text{lower}) & = \\
\Updelta^{d-1}\cdot \left\{ \Updelta_{\upalpha}-\Updelta_{f}\right\}  + \text{lower}& =\\
\Updelta^{d-1}T^{q(q+1)}\cdot \left\{ \sum (c+\upalpha)^{1-q}- \sum (c+f)^{1-q}\right\} + \text{lower}  &.
\end{align*}
Since $|\upalpha|<|f|$, Lemma \ref{omegalemma} gives
\[  \left|   \sum (c+\upalpha)^{1-q}\right| = |\upalpha|^{q(1-q)}>|f|^{q(1-q)}=   \left|   \sum (c+f)^{1-q}\right|. \]
It follows that the numerator of ${\sf N}(j (\mathfrak{b}))-{\sf N}(j ((1)))$ has absolute value
\[ |C||\Updelta|^{d-1}q^{q(q+1)}   |\upalpha|^{q(1-q)}\not=0,\] where $C\not=0$ is a constant, and we are done.
 \end{proof}

\section{Generation of the Hilbert Class Field}\label{HCFGenerationSection}

Let $H_{A_{\infty_{1}}}$ be the Hilbert class field associated to the ring $A_{\infty_{1}}$: the maximal abelian unramified extension of $K$ which splits
completely at $\infty_{1}$, see \cite{Ros}.  Since $K$ is totally real, the constant field of $K$ = $\F_{\infty}=\F_{q}$.  This means that $H_{A_{\infty_{1}}}=
H_{A_{\infty_{1}}}^{+}$ = {\it narrow} Hilbert class field associated to $A_{\infty_{1}}$, see Definition 7.4.1 and Proposition 7.4.10 of \cite{Goss}.

\begin{theo}\label{HAinft1Thm} Let $\mathfrak{a}\subset K$ be an $A_{\infty_{1}}$ fractional ideal.  Then $j(\mathfrak{a})\in \bar{k}$ and \[ H_{A_{\infty_{1}}}=K(j(\mathfrak{a})).\]
\end{theo}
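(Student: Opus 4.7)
The plan is to identify $j(\mathfrak{a})$ with the $j$-invariant of a rank one Drinfeld $A_{\infty_{1}}$-module attached to the lattice $\mathfrak{a}\subset{\bf C}_{\infty}$ and then invoke the complex-multiplication theory of Hayes-Drinfeld. Since $\mathfrak{a}$ is a discrete rank one $A_{\infty_{1}}$-submodule of ${\bf C}_{\infty}$, the standard exponential $e_{\mathfrak{a}}:{\bf C}_{\infty}\to{\bf C}_{\infty}$ with kernel $\mathfrak{a}$ produces a rank one Drinfeld $A_{\infty_{1}}$-module $\upphi^{\mathfrak{a}}$ whose analytic $j$-invariant (built from Eisenstein-type series) agrees with the $j(\mathfrak{a})$ of Section \ref{Quantumjsection}; this identification is modelled on \cite{Gek} and depends only on the ideal class $[\mathfrak{a}]$. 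With this in place, Hayes theory (see \cite{Hayes}, \cite{Thak}, \cite{Ros}) shows that $\upphi^{\mathfrak{a}}$ admits a sign-normalized model defined over $\bar{k}$, and in fact over the narrow Hilbert class field $H_{A_{\infty_{1}}}$ of $A_{\infty_{1}}$; hence $j(\mathfrak{a})\in\bar{k}$ and $K(j(\mathfrak{a}))\subseteq H_{A_{\infty_{1}}}$.

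Next, I would use Artin reciprocity to describe the Galois action. The isomorphism ${\sf Cl}_{A_{\infty_{1}}}\xrightarrow{\sim}{\rm Gal}(H_{A_{\infty_{1}}}/K)$ sends $[\mathfrak{c}]$ to an element $\upsigma_{[\mathfrak{c}]}$ whose action on $j$-invariants of rank one Drinfeld modules with CM by $A_{\infty_{1}}$ is given, up to the standard convention, by
\[ \upsigma_{[\mathfrak{c}]}(j(\mathfrak{b}))=j(\mathfrak{c}^{-1}\mathfrak{b}). \]
Consequently ${\rm Gal}(H_{A_{\infty_{1}}}/K)$ permutes the set $\{ j(\mathfrak{b}):[\mathfrak{b}]\in{\sf Cl}_{A_{\infty_{1}}}\}$ transitively, and the stabilizer of $j(\mathfrak{a})$ is the kernel of the map $[\mathfrak{c}]\mapsto\upsigma_{[\mathfrak{c}]}(j(\mathfrak{a}))$, equivalently the preimage under $[\mathfrak{b}]\mapsto j(\mathfrak{b})$ of the class of $[\mathfrak{a}]$.

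The generation claim therefore reduces to showing that the assignment $[\mathfrak{b}]\mapsto j(\mathfrak{b})$ is injective on ${\sf Cl}_{A_{\infty_{1}}}$. If $j(\mathfrak{b})=j(\mathfrak{c})$, applying $\upsigma_{[\mathfrak{c}]^{-1}}$ and using Galois equivariance gives $j(\mathfrak{c}^{-1}\mathfrak{b})=j((1))$, and Theorem \ref{jadifffrom1} then forces $\mathfrak{c}^{-1}\mathfrak{b}$ to be principal, so $[\mathfrak{b}]=[\mathfrak{c}]$. Thus $[K(j(\mathfrak{a})):K]=|{\sf Cl}_{A_{\infty_{1}}}|=[H_{A_{\infty_{1}}}:K]$, yielding $K(j(\mathfrak{a}))=H_{A_{\infty_{1}}}$.

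\textbf{Main obstacle.} The delicate ingredient is the bridge between the analytic, Eisenstein-series definition of $j(\mathfrak{a})$ used in this paper and the algebraic $j$-invariant of the associated rank one Drinfeld module from Hayes-Drinfeld theory, together with the precise form of the reciprocity law $\upsigma_{[\mathfrak{c}]}(j(\mathfrak{b}))=j(\mathfrak{c}^{-1}\mathfrak{b})$. Once these identifications are carried out (for which \cite{Gek} and \cite{Hayes} provide the template), the algebraicity $j(\mathfrak{a})\in\bar{k}$ and the containment $K(j(\mathfrak{a}))\subseteq H_{A_{\infty_{1}}}$ are immediate, and equality follows from the injectivity of $[\mathfrak{b}]\mapsto j(\mathfrak{b})$ established above from Theorem \ref{jadifffrom1} combined with Galois equivariance.
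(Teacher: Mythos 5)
Your proposal follows the same logical skeleton as the paper's proof: show $j(\mathfrak{a})\in H_{A_{\infty_{1}}}$, obtain the reciprocity action $j(\mathfrak{a})^{\upsigma_{\mathfrak{b}}}=j(\mathfrak{b}^{-1}\mathfrak{a})$, deduce injectivity of $[\mathfrak{b}]\mapsto j(\mathfrak{b})$ on ${\sf Cl}_{A_{\infty_{1}}}$ from Theorem \ref{jadifffrom1} together with Galois equivariance and transitivity, and conclude by matching the orbit size with $[H_{A_{\infty_{1}}}:K]$ using that $H_{A_{\infty_{1}}}/K$ is abelian. The one genuine divergence is the mechanism for the first two steps. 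You route through the rank-one Drinfeld $A_{\infty_{1}}$-module attached to the lattice $\mathfrak{a}$ and Hayes' sign-normalized models, which requires the analytic--algebraic identification of the paper's Eisenstein-series $j(\mathfrak{a})$ with the $j$-invariant of the underlying rank-two Drinfeld $A$-module (the point you correctly single out as the main obstacle). The paper avoids this comparison entirely: it applies Goss' Lemma directly to the zeta values, producing a period $\upxi$ with $\upzeta^{\mathfrak{a}}(n(q-1))/\upxi^{n(q-1)}\in H_{A_{\infty_{1}}}$; since $J(\mathfrak{a})$ is a degree-zero homogeneous ratio of these values the period cancels, and the Galois action on the normalized zeta values is quoted from Goss, Hayes and Shu. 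Both routes rest on the same underlying Hayes theory, but the paper's version buys you the containment and the reciprocity formula without ever constructing a Drinfeld module, while yours makes the CM picture explicit at the cost of the identification you flag. (Minor slip: in your injectivity step you should apply $\upsigma_{[\mathfrak{c}]}$, not $\upsigma_{[\mathfrak{c}]^{-1}}$, to land on $j(\mathfrak{c}^{-1}\mathfrak{b})=j((1))$.)
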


\begin{proof} 
By Goss' Lemma (Lemma 8.18.1 of \cite{Goss} or Theorem 5.2.5 of \cite{Thak}), there exists a constant $\upxi\in {\bf C}_{\infty}$ such that
\[ \upzeta^{\mathfrak{a}}(n(q-1))/\upxi^{n(q-1)}\in H_{A_{\infty_{1}}}.\] Since 
\[ 
J(\mathfrak{a}) =  \frac{T^{q^{2}}-T}{(T^{q}-T)^{q+1}} \cdot
\frac{\upzeta^{\mathfrak{a}}(q^{2}-1)}{\upzeta^{\mathfrak{a}}(q-1)^{q+1}}=\frac{T^{q^{2}}-T}{(T^{q}-T)^{q+1}} \cdot
\frac{\upzeta^{\mathfrak{a}}(q^{2}-1)/\upxi^{q^{2}-1}}{\left(\upzeta^{\mathfrak{a}}(q-1)/\upxi^{q-1}\right)^{q+1}},
\]
 it follows that $j(\mathfrak{a})\in H_{A_{\infty_{1}}}$ as well.
The elements of the set 
\[  \left\{  \upzeta^{\mathfrak{a}}(n(q-1))/\upxi^{n(q-1)}:\; [\mathfrak{a}]\in{\sf Cl}_{A_{\infty_{1}}} \right\}  \]
are conjugate by the action ${\rm Gal}(H_{A_{\infty_{1}}}/K)$.  See the proof of Theorem 8.19.4 of \cite{Goss}.
Moreover, if $\upsigma_{\mathfrak{b}}\in {\rm Gal}(H_{A_{\infty_{1}}}/K)$ corresponds to $[\mathfrak{b}]\in {\sf Cl}_{A_{\infty_{1}}}$
by the Artin reciprocity map, then the fundamental theorem of Hayes theory (Theorem 7.4.8 of \cite{Goss} or Theorem 14.7 of \cite{Hayes}) implies
that 
\[\left(\upzeta^{\mathfrak{a}}(n(q-1))/\upxi^{n(q-1)}\right)^{\upsigma_{\mathfrak{b}}}=\upzeta^{\mathfrak{b}^{-1}\mathfrak{a}}(n(q-1))/\upxi^{n(q-1)}, \]
see \cite{Shu}, Theorem A.6 and its proof. Therefore we conclude that
\[ j(\mathfrak{a})^{\upsigma_{\mathfrak{b}}}=j(\mathfrak{b}^{-1}\mathfrak{a}). \]
Since multiplication by $\mathfrak{b}^{-1}$ acts transitively on ${\sf Cl}_{A_{\infty_{1}}}$, by Theorem \ref{jadifffrom1} we see that for all $[\mathfrak{a}]\not=[\mathfrak{a}']$, 
$j(\mathfrak{a})\not=j(\mathfrak{a}')$.  Thus the size of the Galois orbit of $j(\mathfrak{a})$ is $h_{A_{\infty_{1}}}$ = the class number of $A_{\infty_{1}}$ 
= $\# {\rm Gal}(H_{A_{\infty_{1}}}/K)$.  This proves that $j(\mathfrak{a})$ Galois generates $H_{A_{\infty_{1}}}$.  However, since $H_{A_{\infty_{1}}}/K$
is abelian, it follows that $K(j(\mathfrak{a}))$ is Galois, hence $K(j(\mathfrak{a}))$ contains all of the conjugates of $j(\mathfrak{a})$ and so $K(j(\mathfrak{a}))=H_{A_{\infty_{1}}}$.
\end{proof}

From the proof of Theorem \ref{HAinft1Thm} we have the injectivity of $j$ on ideal classes:

\begin{coro}\label{injectivitycoro} $j:{\sf Cl}_{A_{\infty_{1}}}\rightarrow \bar{k}$ is injective.  In particular, $\# j^{\rm qt}(f)=d$.
\end{coro}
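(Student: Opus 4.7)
The plan is to extract both assertions directly from material already proved, using essentially no new ingredients beyond Theorems \ref{jadifffrom1}, \ref{jfidthem}, and \ref{HAinft1Thm}.

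First, for the injectivity of $j\colon{\sf Cl}_{A_{\infty_1}}\to \bar{k}$, I would invoke the Galois-theoretic argument already deployed in the proof of Theorem \ref{HAinft1Thm}. There it was shown that the Artin action satisfies $j(\mathfrak{a})^{\upsigma_{\mathfrak{b}}}=j(\mathfrak{b}^{-1}\mathfrak{a})$. To deduce injectivity on classes, I would take $[\mathfrak{a}]\neq [\mathfrak{a}']$ and consider the non-principal class $[\mathfrak{c}]:=[\mathfrak{a}(\mathfrak{a}')^{-1}]$. Theorem \ref{jadifffrom1} gives $j(\mathfrak{c})\neq j((1))$, and applying the Galois automorphism $\upsigma_{\mathfrak{a}'^{-1}}$ to this inequality (which transports $j((1))\mapsto j(\mathfrak{a}')$ and $j(\mathfrak{c})\mapsto j(\mathfrak{a}'\mathfrak{c}^{-1}\cdot \mathfrak{a}'^{-1}\cdot\mathfrak{a}'^{-1})$; one should keep careful track but the net effect is $j(\mathfrak{c})\mapsto j(\mathfrak{a})$ after rearranging indices) yields $j(\mathfrak{a})\neq j(\mathfrak{a}')$. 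Equivalently, since the Galois orbit of $j((1))$ was shown in the preceding proof to have full size $h_{A_{\infty_1}}$, the map $[\mathfrak{b}]\mapsto j(\mathfrak{b}^{-1})$ is a bijection onto that orbit, and this is literally the injectivity statement.

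Second, for the multivalue count $\#j^{\rm qt}(f)=d$, I would combine the injectivity with Theorem \ref{jfidthem} and the preceding lemma on $\mathfrak{a}_i$. By Theorem \ref{jfidthem},
\[ j^{\rm qt}(f)=\{j(\mathfrak{a}_i)\mid i=0,\dots,d-1\}.\]
The lemma asserting $\mathfrak{a}_{d-1}^{\,i}=\mathfrak{a}_{d-i}$ exhibits $[\mathfrak{a}_0],\dots,[\mathfrak{a}_{d-1}]$ as the $d$ elements of a cyclic subgroup of ${\sf Cl}_{A_{\infty_1}}$ of order exactly $d$; in particular they are pairwise distinct classes. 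By the injectivity just established, their $j$-values are pairwise distinct, so $\#j^{\rm qt}(f)=d$ as claimed.

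There is no real obstacle here: the corollary is essentially a bookkeeping consequence of the two preceding results. The only point that requires any care is matching the Galois-action identity with the injectivity statement in the correct direction, but once that is written out symmetrically, both assertions follow in a few lines.
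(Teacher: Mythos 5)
Your proposal is correct and follows essentially the same route as the paper: the injectivity is exactly the observation, already made inside the proof of Theorem \ref{HAinft1Thm}, that the Galois equivariance $j(\mathfrak{a})^{\upsigma_{\mathfrak{b}}}=j(\mathfrak{b}^{-1}\mathfrak{a})$ transports the inequality $j(\mathfrak{c})\neq j((1))$ of Theorem \ref{jadifffrom1} to any pair of distinct classes, and the count $\#j^{\rm qt}(f)=d$ then follows from Theorem \ref{jfidthem} together with the fact that the $[\mathfrak{a}_i]$ are the $d$ distinct elements of a cyclic subgroup of order $d$. The only blemish is the garbled bookkeeping in your parenthetical (the correct computation is simply that $\upsigma_{(\mathfrak{a}')^{-1}}$ sends $j((1))$ to $j(\mathfrak{a}')$ and $j(\mathfrak{a}(\mathfrak{a}')^{-1})$ to $j(\mathfrak{a})$), but your fallback formulation via the full-size Galois orbit is precisely the paper's argument.
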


By Theorem \ref{jfidthem} we have immediately:

\begin{coro}  $j^{\rm qt}(f)\subset \bar{k}$ and $H_{A_{\infty_{1}}}=K(j^{\rm qt}(f))$.
\end{coro}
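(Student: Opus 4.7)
The plan is to combine the two most recent results. By Theorem \ref{jfidthem} we have the explicit finite description
\[ j^{\rm qt}(f) = \{ j(\mathfrak{a}_0), j(\mathfrak{a}_1), \dots, j(\mathfrak{a}_{d-1})\}, \]
where each $\mathfrak{a}_i \subset A_{\infty_1}$ is an integral (in particular fractional) ideal. Thus the corollary reduces entirely to properties of the $j$-invariants of the ideal classes $[\mathfrak{a}_i] \in {\sf Cl}_{A_{\infty_1}}$, and both assertions are immediate consequences of Theorem \ref{HAinft1Thm}.

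For the containment $j^{\rm qt}(f) \subset \bar{k}$, I simply apply Theorem \ref{HAinft1Thm} to each of the finitely many ideals $\mathfrak{a}_0,\dots,\mathfrak{a}_{d-1}$: the theorem guarantees $j(\mathfrak{a}_i) \in H_{A_{\infty_1}} \subset \bar{k}$. Since $j^{\rm qt}(f)$ is the union of these finitely many algebraic values, the whole multivalue set lies in $\bar{k}$.

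For the field identity $H_{A_{\infty_1}} = K(j^{\rm qt}(f))$ I establish inclusions in both directions. The inclusion $K(j^{\rm qt}(f)) \subseteq H_{A_{\infty_1}}$ is immediate from the previous step, since every generator $j(\mathfrak{a}_i)$ of $K(j^{\rm qt}(f))$ already lies in $H_{A_{\infty_1}}$. For the reverse inclusion, I apply Theorem \ref{HAinft1Thm} to a single ideal class (for instance $[\mathfrak{a}_0]=[(1)]$, which is principal since $\mathfrak{a}_0 = (f)$) to obtain $H_{A_{\infty_1}} = K(j(\mathfrak{a}_0))$; then $j(\mathfrak{a}_0) \in j^{\rm qt}(f) \subseteq K(j^{\rm qt}(f))$ forces $H_{A_{\infty_1}} \subseteq K(j^{\rm qt}(f))$. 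No real obstacle is expected: both ingredients (the computation of $j^{\rm qt}(f)$ as a set of ideal $j$-invariants, and the Hayes/Goss-theoretic generation result for $H_{A_{\infty_1}}$ by a single $j(\mathfrak{a})$) are already in place. The only mild point is to fix the interpretation of $K(j^{\rm qt}(f))$ as the field obtained by adjoining every element of the finite multivalue set, which is the reading consistent with the main theorem announced in the introduction.
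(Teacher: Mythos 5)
Your argument is correct and is exactly how the paper obtains this corollary: it is stated as an immediate consequence of Theorem \ref{jfidthem} (which identifies $j^{\rm qt}(f)$ with the finite set $\{j(\mathfrak{a}_i)\}$) combined with Theorem \ref{HAinft1Thm} (which gives $j(\mathfrak{a})\in\bar{k}$ and $H_{A_{\infty_1}}=K(j(\mathfrak{a}))$ for any single fractional ideal). Your two-inclusion bookkeeping, including the use of the principal ideal $\mathfrak{a}_0=(f)$ for the reverse containment, matches the intended reading.
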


We now turn to consideration of the {\it relative} Dedekind domain
\[ \mathcal{O}_{K} =\text{ integral closure of $A$ in $K$}.\]  Then $\mathcal{O}_{K}$ is the ring of functions regular outside of $\uppi^{-1}(\infty)=\{ \infty_{1},\infty_{2}\}\subset\Upsigma$.
 Writing $\langle S\rangle_{A}$ for the $A$-module generated by the set $S\subset K$,
we may identify 
\[ \mathcal{O}_{K} =\langle 1,f\rangle_{A}= \F_{q}[T,f]\supset A_{\infty_{1}}. \]
Note that $\mathcal{O}_{K}=f^{-1}A_{\infty_{1}}$.
By Dirichlet's Unit Theorem \cite{cohn}, the unit group $\mathcal{O}_{K}^{\ast}$ is generated by the $\F_{q}^{\ast}$ multiples of $f$.  The inflation map $\mathfrak{a}\mapsto \mathfrak{a}\mathcal{O}_{K}$, $\mathfrak{a}\subset A_{\infty_{1}}$ an ideal, induces a surjective
homomorphism of class groups
\[ \Upphi:{\sf Cl}_{A_{\infty_{1}}}\longrightarrow {\sf Cl}_{\mathcal{O}_{K}} . \]

\begin{prop}\label{KerProp}  ${\rm Ker}(\Upphi )= \langle [\mathfrak{a}_{d-1}]\rangle$, a cyclic subgroup of ${\sf Cl}_{A_{\infty_{1}}}$ of order $d$.  \end{prop}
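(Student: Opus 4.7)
The plan is to combine an ideal-theoretic identification $\mathfrak{a}_{d-1} = \mathfrak{p}_{\infty_{2}}$ (where $\mathfrak{p}_{\infty_{2}}\subset A_{\infty_{1}}$ is the prime ideal at the place $\infty_{2}$) with the standard localization exact sequence relating ${\sf Cl}_{A_{\infty_{1}}}$ to ${\sf Cl}_{\mathcal{O}_{K}}$.

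First I would verify $\mathfrak{a}_{d-1} = \mathfrak{p}_{\infty_{2}}$ via a local valuation check.  The fact that ${\rm div}(f)$ on $\Upsigma$ has degree zero, together with $v_{\infty_{1}}(f) = -d$ and $v_{\infty_{2}}(f) = d$ (both established in Section 1), implies that $f$ has no zeros on ${\rm Spec}(A_{\infty_{1}})$ outside of $\infty_{2}$.  Therefore at any closed point $P \neq \infty_{2}$ of ${\rm Spec}(A_{\infty_{1}})$ the generator $f \in \mathfrak{a}_{d-1}$ is a unit, so $\mathfrak{a}_{d-1}$ is locally trivial there; at $P = \infty_{2}$, the valuations $v_{\infty_{2}}(fT^{i}) = d-i$ for $i = 0,\dots,d-1$ attain the minimum value $1$ at $i = d-1$, so $v_{\infty_{2}}(\mathfrak{a}_{d-1}) = 1$.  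Since $A_{\infty_{1}}$ is Dedekind, this forces $\mathfrak{a}_{d-1} = \mathfrak{p}_{\infty_{2}}$ as ideals.

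Next, the defining equation $f^{2} = af + b$ with $b \in \F_{q}^{\ast}$ gives $f^{-1} = (f-a)/b \in \mathcal{O}_{K}$, and the identity $T = f^{-1}\cdot (fT)$ yields $\mathcal{O}_{K} = A_{\infty_{1}}[f^{-1}]$ --- that is, $\mathcal{O}_{K}$ is the localization of $A_{\infty_{1}}$ obtained by inverting the single prime $\mathfrak{p}_{\infty_{2}}$.  The associated Dedekind-domain excision sequence reads
\[ \Z \xrightarrow{\; n \,\mapsto\, [\mathfrak{p}_{\infty_{2}}]^{n} \;} {\sf Cl}_{A_{\infty_{1}}} \xrightarrow{\;\Upphi\;} {\sf Cl}_{\mathcal{O}_{K}} \longrightarrow 0, \]
whence ${\rm Ker}(\Upphi) = \langle [\mathfrak{p}_{\infty_{2}}]\rangle = \langle [\mathfrak{a}_{d-1}]\rangle$.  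That this cyclic group has order exactly $d$ is the content of the preceding Lemma, which shows $\mathfrak{a}_{d-1}^{d} = \mathfrak{a}_{0} = (f)$ is principal while $\mathfrak{a}_{i}$ is non-principal for $0 < i < d$.

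The only real obstacle is justifying the excision sequence in the paper's setup, but this is a classical fact for Dedekind domains (cf.\ \cite{Ros}) amounting to the statement that $\Upphi$ is surjective with kernel generated by the classes of the primes being inverted; everything else reduces to the divisor calculation of Section 1.
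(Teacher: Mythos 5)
Your proof is correct, and it hinges on the same two pivot points as the paper's argument --- the identification of $\mathfrak{a}_{d-1}$ with the prime of $A_{\infty_{1}}$ at $\infty_{2}$, and the fact that ${\rm Ker}(\Upphi)$ is generated by the class of that prime --- but it establishes both by genuinely different means. For the kernel, the paper invokes Rosen's Lemma 1.1 to write ${\sf Cl}_{A_{\infty_{1}}}\cong{\sf Cl}_{K}/\langle\infty_{1}\rangle$ and ${\sf Cl}_{\mathcal{O}_{K}}\cong{\sf Cl}_{K}/\langle\infty_{1},\infty_{2}\rangle$ and reads off the kernel of the canonical projection, whereas you verify $\mathcal{O}_{K}=A_{\infty_{1}}[f^{-1}]$ directly (via $f^{-1}=(f-a)/b$ and $T=f^{-1}\cdot fT$) and then quote the Dedekind localization (excision) sequence; these are two packagings of the same class-field-free fact, and yours has the virtue of making the relation between the two rings completely explicit rather than routing through the divisor class group of the curve. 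For the identification $\mathfrak{a}_{d-1}=\mathfrak{p}_{\infty_{2}}$, the paper shows the containment $\mathfrak{a}_{d-1}\subset\mathfrak{m}_{\infty_{2}}$ and concludes by maximality, since $A_{\infty_{1}}/\mathfrak{a}_{d-1}\cong\F_{q}$; your valuation computation ($v_{\infty_{2}}(fT^{i})=d-i$ with minimum $1$ at $i=d-1$, together with $f$ being a unit at every other closed point of ${\rm Spec}(A_{\infty_{1}})$ by the degree-zero divisor count) gives the same equality and in fact yields the slightly sharper statement $v_{\mathfrak{p}_{\infty_{2}}}(\mathfrak{a}_{d-1})=1$ without needing to compute the residue ring. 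Both arguments delegate the assertion that the cyclic group has order exactly $d$ to the earlier Lemma on the powers of $\mathfrak{a}_{d-1}$, so you match the paper on that point as well.
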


\begin{proof}  Let $\mathcal{D}$ be the divisor group of $K$, $\mathcal{P}$ the subgroup of principal divisors and ${\sf Cl}_{K}=\mathcal{D}/\mathcal{P}$ the divisor class group.  
Then by Lemma 1.1 of \cite{Ros}, we have 
\[  {\sf Cl}_{A_{\infty_{1}}} \cong {\sf Cl}_{K}/\langle \infty_{1}\rangle,\quad {\sf Cl}_{\mathcal{O}_{K}} \cong {\sf Cl}_{K}/\langle \infty_{1},\infty_{2}\rangle\]
and $\Upphi$ can be identified with the canonical projection 
\[  {\sf Cl}_{K}/\langle \infty_{1}\rangle\longrightarrow   {\sf Cl}_{K}/\langle \infty_{1},\infty_{2}\rangle.\]
Thus ${\rm Ker}(\Upphi )$ is the cyclic group $\langle \infty_{2}\rangle$ generated by the class of $\infty_{2}$ in $ {\sf Cl}_{K}/\langle \infty_{1}\rangle$.
Now the isomorphism $t:{\sf Cl}_{K}/\langle \infty_{1}\rangle \cong {\sf Cl}_{A_{\infty_{1}}}$ is induced by the association 
\[  \sum_{P\not=\infty_{1}} n_{P}(P)\longrightarrow \prod\mathfrak{m}_{P}^{n_{P}} \]
where $\mathfrak{m}_{P}\subset A_{\infty_{1}}$ is the maximal ideal of functions having a zero at $P\not=\infty_{1}$.  It follows that ${\rm Ker}(\Upphi )$ is generated
by the ideal class of $\mathfrak{m}_{\infty_{2}}$.  However
\[  \mathfrak{a}_{d-1}  = (f,fT,\dots ,fT^{d-1}) \subset \mathfrak{m}_{\infty_{2}},\]
since $f$ has a zero of order $d$ at $\infty_{2}$ and $T$ has a pole of order $1$ at $\infty_{2}$,
so $fT^{i}$, $i=0,\dots ,d-1$, all vanish at $\infty_{2}$.  But $\mathfrak{a}_{d-1}$ is maximal since $A_{\infty_{1}}/\mathfrak{a}_{d-1}\cong\F_{q}$, so $ \mathfrak{a}_{d-1}=\mathfrak{m}_{\infty_{2}}$.
\end{proof}

\begin{coro}\label{classnumberform} Let $h_{K}$, $h_{A_{\infty_{1}}}$ and
$h_{\mathcal{O}_{K}}$ be the class numbers of $K$, $A_{\infty_{1}}$ and $\mathcal{O}_{K}$, respectively.  Then 
\[ h_{K}=h_{A_{\infty_{1}}} = h_{\mathcal{O}_{K}}\cdot d.\]
\end{coro}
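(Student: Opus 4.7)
The plan is to combine Proposition \ref{KerProp} with the Rosen dictionary (Lemma 1.1 of \cite{Ros}) already invoked in the proof of that proposition. The whole argument is essentially a bookkeeping exercise at the level of class groups.

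For the second equality $h_{A_{\infty_{1}}} = h_{\mathcal{O}_{K}} \cdot d$, I would simply apply the first isomorphism theorem to the surjective homomorphism $\Upphi : {\sf Cl}_{A_{\infty_{1}}} \to {\sf Cl}_{\mathcal{O}_{K}}$: Proposition \ref{KerProp} asserts precisely that its kernel is the cyclic subgroup $\langle [\mathfrak{a}_{d-1}]\rangle$ of order $d$, whence $|{\sf Cl}_{A_{\infty_{1}}}| = d \cdot |{\sf Cl}_{\mathcal{O}_{K}}|$.

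For the first equality $h_{K} = h_{A_{\infty_{1}}}$, the relevant input is the isomorphism ${\sf Cl}_{A_{\infty_{1}}} \cong {\sf Cl}_{K}/\langle [\infty_{1}]\rangle$ from Rosen's Lemma 1.1, where ${\sf Cl}_{K} = \mathcal{D}/\mathcal{P}$ is the full divisor class group of $K$. The key observation is that $[\infty_{1}]$ has degree $1$: by hypothesis $K/k$ is real, meaning the place $\infty \in \PR^{1}$ splits completely in $K$, so the residue field of $\infty_{1}$ equals $\F_{q}$. Consequently the class $[\infty_{1}]$ furnishes a splitting of the degree exact sequence
\begin{equation*}
0 \longrightarrow {\sf Cl}_{K}^{0} \longrightarrow {\sf Cl}_{K} \stackrel{\deg}{\longrightarrow} \Z \longrightarrow 0,
\end{equation*}
so ${\sf Cl}_{K} \cong {\sf Cl}_{K}^{0} \oplus \Z\cdot [\infty_{1}]$ and the quotient by $\langle [\infty_{1}]\rangle$ is canonically identified with the degree-zero subgroup ${\sf Cl}_{K}^{0}$. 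Since $h_{K} = |{\sf Cl}_{K}^{0}|$ by the standard convention for the class number of a function field, we conclude $h_{A_{\infty_{1}}} = h_{K}$, which combined with the previous paragraph gives the stated chain of equalities.

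No real obstacle is anticipated: once Proposition \ref{KerProp} and Rosen's lemma are in hand, the only substantive point is recognizing that $\deg([\infty_{1}]) = 1$ is forced by the realness hypothesis on $K/k$, and everything else is elementary group theory.
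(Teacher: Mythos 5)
Your argument is correct and follows essentially the same route as the paper: the second equality comes from Proposition \ref{KerProp} via the surjection $\Upphi$, and the first comes from Rosen's identification ${\sf Cl}_{A_{\infty_{1}}} \cong {\sf Cl}_{K}/\langle [\infty_{1}]\rangle$ together with $\deg(\infty_{1})=1$. The paper justifies $\deg(\infty_{1})=1$ by noting $K\subset k_{\infty}$ forces the residue field at $\infty_{1}$ to be $\F_{q}$, whereas you invoke the realness (complete splitting) hypothesis; these are equivalent observations, and your explicit use of the degree exact sequence simply spells out a step the paper leaves implicit.
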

\begin{proof} Since $K\subset k_{\infty}$, its field of constants is $\F_{q}$.  Hence the degree $d_{\infty_{1}}$ of $\infty_{1}$ is $1$, giving the first equality.  The second
equality follows from Proposition \ref{KerProp}.
\end{proof}

Let $H_{\mathcal{O}_{K}}$ be the Hilbert class field associated to the ring $\mathcal{O}_{K}$: the maximal abelian unramified extension of $K$ which splits
completely at $\infty_{1}$ and $\infty_{2}$.  See again \cite{Ros}. Clearly $H_{A_{\infty_{1}}}\supset H_{\mathcal{O}_{K}}$ is a Galois extension having Galois group 
$\cong {\rm Ker}(\Upphi )=\langle[\mathfrak{a}_{d-1}]\rangle$.  
Denote 
\[   {\sf N} (j^{\rm qt}(f)) = \text{product of elements in $j^{\rm qt}(f)$} = \prod_{i=0}^{d-1}j(\mathfrak{a}_{i})={\sf N}((1)) .
\]

\begin{theo}\label{maintheo}  $H_{\mathcal{O}_{K}}=K( {\sf N}(j^{\rm qt}(f)))$.
\end{theo}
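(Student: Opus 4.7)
The plan is to realize $K({\sf N}(j^{\rm qt}(f)))$ as the fixed field of the subgroup $Z = {\rm Gal}(H_{A_{\infty_{1}}}/H_{\mathcal{O}_{K}}) = \langle [\mathfrak{a}_{d-1}]\rangle \subset {\sf Cl}_{A_{\infty_{1}}}$, inside the abelian extension $H_{A_{\infty_{1}}}/K$ described by Theorem \ref{HAinft1Thm} and its Corollary. Since $H_{A_{\infty_{1}}}/K$ is abelian, every intermediate field is Galois over $K$ and the Galois correspondence reduces the statement to an identification of stabilizers.

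First I would show the inclusion $K({\sf N}(j^{\rm qt}(f)))\subset H_{\mathcal{O}_{K}}$. By Theorem \ref{jfidthem} we have $j^{\rm qt}(f)=\{j(\mathfrak{a}_{0}),\dots,j(\mathfrak{a}_{d-1})\}$ and the Galois action computed in the proof of Theorem \ref{HAinft1Thm}, namely $j(\mathfrak{a})^{\upsigma_{\mathfrak{b}}}=j(\mathfrak{b}^{-1}\mathfrak{a})$, shows that $Z$ permutes the $d$ elements of $j^{\rm qt}(f)$ (indeed transitively, since $Z=\langle[\mathfrak{a}_{d-1}]\rangle$ acts by cyclic rotation on $\{[\mathfrak{a}_{i}]\}$). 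The product ${\sf N}(j^{\rm qt}(f))=\prod_{i=0}^{d-1}j(\mathfrak{a}_{i})$ is therefore $Z$-invariant, hence lies in $H_{A_{\infty_{1}}}^{Z}=H_{\mathcal{O}_{K}}$ by Proposition \ref{KerProp}.

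For the reverse inclusion, set $L=K({\sf N}(j^{\rm qt}(f)))$ and let $H={\rm Gal}(H_{A_{\infty_{1}}}/L)\subset{\sf Cl}_{A_{\infty_{1}}}$ be its stabilizer. The key computation is how an arbitrary $\upsigma_{\mathfrak{b}}$ acts on ${\sf N}(j^{\rm qt}(f))$: using $j(\mathfrak{a}_{i})^{\upsigma_{\mathfrak{b}}}=j(\mathfrak{b}^{-1}\mathfrak{a}_{i})$, we obtain
\[ {\sf N}(j^{\rm qt}(f))^{\upsigma_{\mathfrak{b}}}=\prod_{i=0}^{d-1}j(\mathfrak{b}^{-1}\mathfrak{a}_{i})={\sf N}(j(\mathfrak{b}^{-1})),\]
in the notation preceding Theorem \ref{normdiff}. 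Thus $\upsigma_{\mathfrak{b}}\in H$ if and only if ${\sf N}(j(\mathfrak{b}^{-1}))={\sf N}(j((1)))$. By Theorem \ref{normdiff}, this forces $[\mathfrak{b}^{-1}]\in\{[\mathfrak{a}_{i}]\}=Z$, and hence $[\mathfrak{b}]\in Z$. The reverse implication is immediate from the previous paragraph. Therefore $H=Z$, and by Galois correspondence $L=H_{A_{\infty_{1}}}^{Z}=H_{\mathcal{O}_{K}}$.

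The main obstacle is of course the injectivity-type statement that distinguishes the coset products ${\sf N}(j(\mathfrak{b}))$ for $[\mathfrak{b}]\notin Z$ from ${\sf N}(j((1)))$, but this is precisely the content of Theorem \ref{normdiff}, so with that in hand the remaining argument is the Galois-theoretic packaging above.
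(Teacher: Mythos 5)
Your proposal is correct and is essentially the paper's own argument: both rest on the reciprocity action $j(\mathfrak{a})^{\upsigma_{\mathfrak{b}}}=j(\mathfrak{b}^{-1}\mathfrak{a})$ to place ${\sf N}(j^{\rm qt}(f))$ in $H_{\mathcal{O}_{K}}$ (the paper phrases this as ${\sf N}(j^{\rm qt}(f))={\rm Norm}(j((1)))$), and both use Theorem \ref{normdiff} as the decisive input. The only difference is packaging — you compute the stabilizer of ${\sf N}(j^{\rm qt}(f))$ and invoke the Galois correspondence, while the paper counts its ${\rm Gal}(H_{\mathcal{O}_{K}}/K)$-orbit via Corollary \ref{classnumberform} — which in an abelian extension is the same orbit--stabilizer argument read from the other end.
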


\begin{proof} Let ${\rm Norm}:H_{A_{\infty_{1}}}\longrightarrow H_{\mathcal{O}_{K}}$ be the norm map.  Then ${\rm Norm}(j((1))) =  {\sf N} (j^{\rm qt}(f))$, since each 
$\upsigma\in {\rm Gal}(H_{A_{\infty_{1}}}/H_{\mathcal{O}_{K}} )$ corresponds via reciprocity to some $\mathfrak{a}_{i}$ with
\[   j((1))^{\upsigma}=j(\mathfrak{a}_{i}^{-1}). \]  
By Theorem \ref{normdiff} and Corollary \ref{classnumberform}, the set 
\[  \{ {\rm Norm}(j(\mathfrak{b})):\; \mathfrak{b}\in{\sf Cl}_{A_{\infty_{1}}} \} \ni {\sf N}(j^{\rm qt}(f))  \] consists of $[H_{\mathcal{O}_{K}}:K]$ elements which form an orbit
with respect to the action of ${\rm Gal}(H_{\mathcal{O}_{K}}/K)$.   Since $H_{\mathcal{O}_{K}}/K$ is abelian, all of its subextensions are Galois, in particular, $K( {\sf N}(j^{\rm qt}(f)))/K$
is a Galois subextension, of degree $[H_{\mathcal{O}_{K}}:K]$.  
The Theorem now follows.
\end{proof}

 \end{document}